\numberwithin{equation}{section}
\newtheorem{lma}{Lemma}[section]
\newaliascnt{thmCt}{lma}
\newtheorem{thm}[thmCt]{Theorem}
\newaliascnt{corCt}{lma}
\newtheorem{cor}[corCt]{Corollary}
\newaliascnt{prpCt}{lma}
\newtheorem{prp}[prpCt]{Proposition}
\newcounter{theoremintro}
\newtheorem{thmIntro}[theoremintro]{Theorem}
\newtheorem{prpIntro}[theoremintro]{Proposition}
\theoremstyle{definition}
\newtheorem{pbmIntro}[theoremintro]{Problem}
\newaliascnt{pgrCt}{lma}
\newtheorem{pgr}[pgrCt]{}
\newaliascnt{dfnCt}{lma}
\newtheorem{dfn}[dfnCt]{Definition}
\newaliascnt{rmkCt}{lma}
\newtheorem{rmk}[rmkCt]{Remark}
\newaliascnt{qstCt}{lma}
\newtheorem{qst}[qstCt]{Question}
\newaliascnt{pbmCt}{lma}
\newtheorem{pbm}[pbmCt]{Problem}
\newaliascnt{exaCt}{lma}
\newtheorem{exa}[exaCt]{Example}
\newcommand{\dimnuc}{\dim_{\rm{nuc}}}
\newcommand{\thin}{{\rm{tb}}}
\newcommand{\NN}{\mathbb{N}}
\newcommand{\CC}{\mathbb{C}}
\newcommand{\KK}{\mathcal{K}}
\newcommand{\Bdd}{\mathcal{B}}
\newcommand{\ca}{$C^*$-al\-ge\-bra}
\newcommand{\axiomO}[1]{(O#1)}
\newcommand{\SubSep}{\mathrm{Sub}_{\mathrm{sep}}}
\newcommand{\soft}{{\rm{soft}}}
\newcommand{\alg}{\mathrm{alg}}
\newcommand{\CatCu}{\ensuremath{\mathrm{Cu}}}
\newcommand{\CatW}{\mathrm{W}}
\newcommand{\NNbar}{\overline{\mathbb{N}}}
\newcommand{\andSep}{\,\,\,\text{ and }\,\,\,}
\newcommand{\CuSgp}{$\CatCu$-sem\-i\-group}
\newcommand{\CuMor}{$\CatCu$-mor\-phism}
\newcommand{\ihom}[1]{\llbracket #1 \rrbracket}
\DeclareMathOperator{\LAff}{LAff}
\DeclareMathOperator{\Cu}{Cu}
\DeclareMathOperator{\Lsc}{Lsc}
\DeclareMathOperator{\id}{id}
\DeclareMathOperator{\locdim}{locdim}
\DeclareMathOperator{\Prim}{Prim}
\DeclareMathOperator{\topdim}{topdim}
\DeclareMathOperator{\dr}{dr}
\title{Covering dimension of Cuntz semigroups}
\author{Hannes Thiel}
\address{H.~Thiel, Institute of Geometry, TU Dresden, 01069 Dresden, Germany.}
\email{hannes.thiel@posteo.de}
\urladdr{www.hannesthiel.org}
\author{Eduard Vilalta}
\address{E.~Vilalta, Departament de Matem\`{a}tiques,
Universitat Aut\`{o}noma de Barcelona,
08193 Bellaterra, Barcelona, Spain}
\email{evilalta@mat.uab.cat}
\urladdr{www.eduardvilalta.com}
\thanks{The first named author was partially supported by the Deutsche Forschungsgemeinschaft (DFG, German Research Foundation) under Germany's Excellence Strategy EXC 2044-390685587 (Mathematics M\"{u}nster: Dynamics-Geometry-Structure) and by the ERC Consolidator Grant No. 681207.
The second named author was partially supported by MINECO (grant No.\ PRE2018-083419 and No.\ MTM2017-83487-P), and by the Comissionat per Universitats i Recerca de la Generalitat de Catalunya (grant No.\ 2017SGR01725).
}
\subjclass[2010]%
{Primary
46L05, 
46L85; 
Secondary
54F45, 
55M10. 
}
\keywords{$C^*$-algebras, Cuntz semigroups, covering dimension}
\date{\today}
\begin{document}

\begin{abstract}
We introduce a notion of covering dimension for Cuntz semigroups of \ca{s}.
This dimension is always bounded by the nuclear dimension of the \ca, and for subhomogeneous \ca{s} both dimensions agree.

Cuntz semigroups of $\mathcal{Z}$-stable \ca{s} have dimension at most one.
Further, the Cuntz semigroup of a simple, $\mathcal{Z}$-stable \ca{} is zero-dimen\-sional if and only if the \ca{} has real rank zero or is stably projectionless.
\end{abstract}

\maketitle

\section{Introduction}

The Cuntz semigroup of a \ca{} is a powerful invariant in the structure and classification theory of \ca{s}.
It was introduced by Cuntz \cite{Cun78DimFct} in his pioneering work on the structure of simple \ca{s}, and it was a key ingredient for proving the existence of traces on stably finite, exact \ca{s}.

Later, the Cuntz semigroup was used by Toms \cite{Tom08ClassificationNuclear} to distinguish his groundbreaking examples of simple, nuclear \ca{s} that have the same Elliott invariant ($K$-theoretic and tracial data). 
This led to an important revision of Elliott's program to classify simple, nuclear \ca{s}:
To obtain classification by the Elliott invariant, an additional regularity condition is necessary.

The situation was further clarified by the Toms-Winter conjecture (see \cite{Win18ICM} for a recent discussion), which predicts that three regularity conditions are equivalent for simple, nuclear \ca{s}, and that these conditions lead to classification by the Elliott invariant.
The regularity conditions are:
\begin{enumerate}
\item
finite nuclear dimension, where the nuclear dimension \cite{WinZac10NuclDim} is a generalization of covering dimension to nuclear \ca{s};
\item
$\mathcal{Z}$-stability, that is, tensorial absorption of the Jiang-Su algebra $\mathcal{Z}$;
\item
strict comparison of positive elements, a regularity property of the Cuntz semigroup.
\end{enumerate}

It is known that~(1) and~(2) are equivalent \cite{Win12NuclDimZstable, CasEviTikWhiWin21NucDimSimple}, that~(2) implies~(3) \cite{Ror04StableRealRankZ}, and that~(3) implies~(2) in many particular cases (see \cite[Section~9]{Thi20RksOps} for a discussion).
Moreover, by a remarkable breakthrough building on work of numerous people, it is known that unital, separable, simple, nuclear \ca{s} with finite nuclear dimension and satisfying the Universal Coefficient Theorem (UCT) are classified by their Elliott invariant;
see \cite{EllGonLinNiu15arX:classFinDR2} and \cite[Corollary~D]{TikWhiWin17QDNuclear}.

This shows that generalizations of covering dimension for \ca{s} and regularity properties of Cuntz semigroup are closely related and highly relevant for the structure and classification theory of \ca{s}.
In this paper, we combine these aspects by defining a notion of \emph{covering dimension for Cuntz semigroups}, thus introducing a second-level invariant for \ca{s};
see \autoref{dfn:dim}.
More generally, we define covering dimension for abstract Cuntz semigroups, usually called \CuSgp{s}, as introduced in \cite{CowEllIva08CuInv} and extensively studied in \cite{AntPerThi18TensorProdCu, AntPerThi20AbsBivariantCu, AntPerThi20CuntzUltraproducts, AntPerRobThi18arX:CuntzSR1, AntPerRobThi21Edwards}.

Previously, the notion of $n$-comparison for \CuSgp{s} had been considered as `an algebraic interpretation of dimension';
see \cite{Rob11NuclDimComp} and \cite[Remarks~3.2(iii)]{Win12NuclDimZstable}.
However, we believe that the results in this paper show that our notion of dimension is more suited to capture dimensional properties of a \ca{} and its Cuntz semigroup.
For example, for every compact, metrizable space $X$, the \CuSgp{} $\Lsc(X,\NNbar)$ of lower-semicontinuous functions $X\to\NNbar=\{0,1,2,\ldots,\infty\}$ has dimension agreeing with the covering dimension of~$X$;
see \autoref{exa:Lsc}.
More interestingly, we show that a similar result holds for Cuntz semigroups of commutative \ca{s}:

\begin{prpIntro}[{\ref{prp:CommutativeUnital}}]
\label{prpA}
Let $X$ be a compact, Hausdorff space.
Then
\[
\dim(\Cu(C(X))) = \dim(X).
\]
\end{prpIntro}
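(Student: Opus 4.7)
The plan is to reduce the computation of $\dim(\Cu(C(X)))$ to the computation of $\dim(\Lsc(X,\NNbar))$ already carried out in \autoref{exa:Lsc}, passing through the compact metrizable case.

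First, I would treat the case where $X$ is compact and metrizable. For such spaces, it is known (essentially a result of Robert, extending the Ciuperca--Elliott computation in dimension one) that there is a natural $\CatCu$-isomorphism $\Cu(C(X))\cong\Lsc(X,\NNbar)$ sending the class of a positive element $a\in C(X)\otimes\KK$ to the pointwise rank function $x\mapsto\rk(a(x))$. Combined with \autoref{exa:Lsc}, this immediately yields $\dim(\Cu(C(X)))=\dim(X)$ for compact metrizable $X$. If for some reason the identification with $\Lsc(X,\NNbar)$ is preferred to be avoided, one can instead mimic the proof of \autoref{exa:Lsc}: from a finite open cover of $X$ of order $\le n+1$ one builds a refined cover by compactly contained opens, and the corresponding indicator-type elements provide a witness for $\dim(\Cu(C(X)))\le n$; conversely, a witness for $\dim(\Cu(C(X)))\le n$ pushes forward to a topological cover of $X$ via evaluation at points.

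Second, I would reduce an arbitrary compact Hausdorff space $X$ to the metrizable case. For the upper bound, any separable $C^*$-subalgebra of $C(X)$ has the form $C(Y)$ for some compact metrizable quotient $q\colon X\to Y$; furthermore, if $\dim(X)\le n$, then $X$ admits arbitrarily refined such quotients with $\dim(Y)\le n$ (a classical fact via refining finite open covers by cozero sets generated by finitely many continuous real functions). Thus $C(X)$ is the inductive limit of subalgebras of the form $C(Y)$ with $\dim(Y)\le\dim(X)$, and applying continuity of the $\Cu$-functor together with the expected permanence of covering dimension of $\CatCu$-sem\-i\-groups under directed suprema gives $\dim(\Cu(C(X)))\le\dim(X)$. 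For the lower bound, any compact metrizable quotient $Y$ of $X$ yields a surjective $\CatCu$-mor\-phism $\Cu(C(X))\to\Cu(C(Y))$, and since dimension of $\CatCu$-sem\-i\-groups does not increase under surjective $\CatCu$-mor\-phisms, one concludes $\dim(\Cu(C(X)))\ge\dim(Y)$. Taking supremum over such $Y$ and using that $\dim(X)=\sup\{\dim(Y):Y\text{ compact metrizable quotient of }X\}$ finishes the lower bound.

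The main obstacle is the passage from metrizable to general compact Hausdorff spaces, which hinges on two permanence properties that do not appear in \autoref{exa:Lsc}: that $\dim$ of $\CatCu$-sem\-i\-groups respects directed inductive limits and drops along surjective $\CatCu$-mor\-phisms. Both should follow directly from the definition once witnesses for the covering property are shown to lift/descend through the structural maps, but they must be established as separate lemmas. A secondary obstacle is verifying the identification $\Cu(C(X))\cong\Lsc(X,\NNbar)$ in the compact metrizable case, or alternatively writing out the ``cover $\leftrightarrow$ witness'' translation directly at the level of $\Cu(C(X))$ without invoking that isomorphism.
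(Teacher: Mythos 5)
There is a genuine gap, and it sits exactly at the hardest part of the statement, namely the upper bound $\dim(\Cu(C(X)))\leq\dim(X)$. Your first route rests on a natural isomorphism $\Cu(C(X))\cong\Lsc(X,\NNbar)$ for all compact metrizable $X$, and this is false as soon as $\dim(X)\geq 2$: for $X=S^2$ the Bott line bundle and the trivial line bundle give projections with the same rank function but distinct Cuntz classes, so the rank map $\Cu(C(X))\to\Lsc(X,\NNbar)$ is not injective (the known isomorphism results of Robert require $\dim(X)\leq 2$ together with vanishing of $\check{H}^2$). This is precisely why the paper only exhibits $\Lsc(X,\NNbar)$ as a \emph{retract} of $\Cu(C(X))$ in \autoref{cor:Lsc}. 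Moreover, even granting such an identification, \autoref{exa:Lsc} only proves $\dim(\Lsc(X,\NNbar))\geq\dim(X)$; the reverse inequality is exactly what the paper \emph{deduces from} the proposition you are proving, so invoking it here is circular. Your fallback (``mimic the proof of \autoref{exa:Lsc}'' with indicator-type elements coming from a refined open cover) does not repair this: to verify $\dim(\Cu(C(X)))\leq n$ one must produce the witnesses $z_{j,k}$ for \emph{arbitrary} $x'\ll x\ll y_1+\ldots+y_r$ with $x,y_j$ classes of arbitrary positive elements of $C(X)\otimes\KK$, so one has to decompose elements of arbitrary rank, where vector-bundle obstructions enter; covers of $X$ by open sets only see the rank-one picture. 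The paper avoids this entirely by proving $\dim(\Cu(A))\leq\dimnuc(A)$ for every \ca{} (\autoref{prp:Nuclear_bound}, via cpc order-zero maps through finite-dimensional algebras and real rank zero of their ultraproducts) and then showing $\dimnuc(C(X))=\dim(X)$ for arbitrary compact Hausdorff $X$ (\autoref{prp:dimnucSH}); your proposal contains no substitute for this step.

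Two further points on the nonmetrizable reduction. For the lower bound you use a ``surjective $\CatCu$-morphism $\Cu(C(X))\to\Cu(C(Y))$'' attached to a metrizable quotient $Y$ of $X$: the arrow goes the wrong way (a continuous surjection $X\to Y$ gives an inclusion $C(Y)\subseteq C(X)$, hence a map $\Cu(C(Y))\to\Cu(C(X))$, not a surjection out of $\Cu(C(X))$), and ``dimension drops along surjective $\CatCu$-morphisms'' is not available -- the paper only has it for quotients by ideals (\autoref{prp:IdealQuot}). This particular defect is harmless, since the direct Urysohn-function argument of \autoref{prp:CommutativeEstimate} (which your ``evaluation at points'' remark gestures at) gives $\dim(X)\leq\dim(\Cu(C(X)))$ for arbitrary compact Hausdorff $X$ with no metrizability needed. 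Your upper-bound reduction of general $X$ to metrizable images via Marde\v{s}i\'{c}-type factorization, continuity of $\Cu$, and \autoref{prp:limit} is sound in spirit (the paper performs the analogous separable reduction at the level of topological/nuclear dimension instead), but it only helps once the metrizable case is actually established, which, as explained above, it is not.
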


We prove the expected permanence properties:
The covering dimension does not increase when passing to ideals or quotients of a \CuSgp{} (\autoref{prp:IdealQuot});
the covering dimension of a direct sum of \CuSgp{s} is the maximum of the covering dimensions of the summands (\autoref{prp:IdealQuot});
and if $S=\varinjlim_{\lambda} S_{\lambda}$ is an inductive limit of \CuSgp{s}, then $\dim(\varinjlim_{\lambda} S_{\lambda})\leq \liminf_{\lambda} \dim(S_{\lambda})$ (\autoref{prp:limit}).

In \cite{ThiVil21arX:DimCu2}, we show that the dimension of a \CuSgp{} is determined by the dimensions of its countably based sub-\CuSgp{s}.
This allows us to reduce most questions about dimensions of \CuSgp{s} to the countably based case.
It also follows that the dimension of the Cuntz semigroup of a \ca{} is determined by the dimensions of the Cuntz semigroups of its separable sub-\ca{s}.

In \autoref{sec:nuclDim}, we study the connection between the dimension of the Cuntz semigroup of a \ca{} and the nuclear dimension \cite{WinZac10NuclDim} of the \ca{}.

\begin{thmIntro}[{\ref{prp:Nuclear_bound}, \ref{prp:SH}}]
Every \ca{} $A$ satisfies $\dim(\Cu(A))\leq\dimnuc(A)$.
If~$A$ is subhomogeneous, then $\dim(\Cu(A))=\dimnuc(A)$.
\end{thmIntro}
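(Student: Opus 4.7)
The plan is to prove the two inequalities separately, leveraging the earlier permanence results and Proposition~\ref{prpA} for the commutative case. For the upper bound $\dim(\Cu(A))\leq\dimnuc(A)$, assume $n:=\dimnuc(A)<\infty$ and verify $\dim(\Cu(A))\leq n$ via \autoref{dfn:dim}. Given a relation $a'\ll a$ in $\Cu(A)$, represented by positive elements in $A\otimes\KK$, choose $\varepsilon>0$ small enough that an $\varepsilon$-perturbation of $a$ still Cuntz-dominates $a'$. The definition of nuclear dimension then supplies a finite-dimensional $F=F^{(0)}\oplus\cdots\oplus F^{(n)}$, a c.p.c.\ map $\psi\colon A\to F$, and c.p.c.\ order-zero maps $\varphi_i\colon F^{(i)}\to A$ with $\bigl\|\sum_i\varphi_i(\psi(a))-a\bigr\|<\varepsilon$.

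The crucial ingredient is that c.p.c.\ order-zero maps induce \CuMor{s} on Cuntz semigroups, via the Winter--Zacharias functional calculus. Since each $F^{(i)}$ is finite-dimensional, $\Cu(F^{(i)})\cong\NN^{k_i}$ is zero-dimensional, so each element $[\varphi_i(\psi(a))]\in\Cu(A)$ factors through a zero-dimensional \CuSgp{}. Combined with the Cuntz estimate $a'\precsim\varphi_0(\psi(a))+\cdots+\varphi_n(\psi(a))$ that follows from the $\varepsilon$-approximation, this produces the $(n+1)$-coloured refinement required by \autoref{dfn:dim}.

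For the subhomogeneous case, the upper bound reduces the problem to showing $\dimnuc(A)\leq\dim(\Cu(A))$. The plan is to isolate a homogeneous subquotient $B\cong M_k(C(X))$ of $A$ with $\dim(X)=\dimnuc(A)$: Winter's structure theory for the nuclear dimension of subhomogeneous \ca{s} provides such a $B$ via a recursive subhomogeneous decomposition of $A$. By the permanence properties collected in \autoref{prp:IdealQuot}, the dimension of $\Cu$ does not increase under passing to ideals or quotients, whence $\dim(\Cu(A))\geq\dim(\Cu(B))$. Morita invariance of $\Cu$ gives $\Cu(M_k(C(X)))\cong\Cu(C(X))$, and Proposition~\ref{prpA} then yields $\dim(\Cu(B))=\dim(X)=\dimnuc(A)$.

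The main obstacle is the structural reduction in the subhomogeneous case: the homogeneous pieces of a subhomogeneous \ca{} appear as subquotients along a composition series rather than as direct summands, so the argument must carefully track how the full nuclear dimension of $A$ is attained on a single homogeneous subquotient. This requires combining Winter's results on the nuclear dimension of recursive subhomogeneous \ca{s} with the monotonic behaviour of $\dim(\Cu(-))$ under the filtration steps, and some care in locating the maximal-dimension piece.
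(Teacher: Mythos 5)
Your first half follows the same general strategy as the paper (nuclear-dimension decompositions, order-zero maps inducing maps on $\Cu$, zero-dimensionality/real rank zero of the finite-dimensional algebras), but the sketch omits the actual verification of \autoref{dfn:dim}: the covering elements $y_1,\ldots,y_r$ never appear in your argument, and the heart of the proof is precisely the refinement step in which $\bar{\psi}_k(x)\leq\sum_{j}\bar{\psi}_k(y_j)$ is split, using Riesz decomposition in the real rank zero algebra on the $k$-th colour, into elements $z_{j,k}\leq\bar{\psi}_k(y_j)$ with $\sum_j z_{j,k}\leq\bar{\psi}_k(x)$, which are then pushed forward by $\bar{\varphi}_k$. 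Two technical obstacles block your finite-stage formulation as written: the downward map $\psi$ in the definition of nuclear dimension is only c.p.c.\ (not order zero on the colour blocks), so it does not induce maps at the level of Cuntz semigroups; and the estimate $\|\sum_i\varphi_i(\psi(a))-a\|<\varepsilon$ only gives $(\varphi_i(\psi(a))-\varepsilon)_+\precsim a$, which makes condition~(iii) of \autoref{dfn:dim} (each colour class sitting below $x$) delicate. The paper avoids both issues by using Robert's ultrafilter reformulation \cite{Rob11NuclDimComp}, which provides order-zero maps $\psi_k\colon A\to\prod_{\mathcal{U}}F_{\lambda,k}$ and the exact identity $\iota=\sum_k\varphi_k\circ\psi_k$ into $A_{\mathcal{U}}$; the decomposition is carried out in $\Cu(\prod_{\mathcal{U}}F_{\lambda,k})$, and representatives are lifted from the ultrapower back to $A$ only at the very end.

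For the subhomogeneous lower bound, the reduction you propose---a single homogeneous subquotient $B\cong M_k(C(X))$ with $\dim(X)=\dimnuc(A)$---is not available as stated, and this is exactly the obstacle you flag without resolving. Winter's theorem identifies the decomposition rank with the topological dimension only for \emph{separable} subhomogeneous algebras, whereas the statement concerns arbitrary subhomogeneous $A$; the paper must first prove $\dimnuc(A)=\dr(A)=\topdim(A)$ in general (\autoref{prp:dimnucSH}) via a $\sigma$-complete and cofinal family of separable subalgebras (\autoref{prp:SubSepSH}). Moreover, the canonical homogeneous ideal-quotients $A_k$ are section algebras of locally trivial (not trivial) bundles over locally compact (not compact) Hausdorff spaces, and the relevant quantity is $\locdim(\Prim(A_k))$, so no global subquotient of the form $M_k(C(X))$ realizing $\topdim(A)$ need exist. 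The paper instead argues locally (\autoref{prp:H}): each point of $\Prim(A_k)$ has a compact trivializing neighbourhood $Y$, yielding a quotient isomorphic to $C(Y)\otimes M_d$, whence $\dim(Y)\leq\dim(\Cu(A_k))\leq\dim(\Cu(A))$ by \autoref{prp:CommutativeEstimate}, Morita invariance of $\Cu$, and \autoref{prp:PermanenceDimCu}; taking suprema gives $\topdim(A)\leq\dim(\Cu(A))$. Your appeal to Proposition~\ref{prpA} is harmless (only the inequality $\dim(\Cu(C(X)))\geq\dim(X)$ is needed, so there is no circularity), but the structural reduction to one maximal-dimensional trivial piece is a genuine gap that the paper's local argument is designed to avoid.
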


We note that $\dim(\Cu(A))$ can be strictly smaller than $\dimnuc(A)$.
For example, the irrational rotation algebra $A_\theta$ satisfies $\dim(\Cu(A_\theta))=0$ while $\dimnuc(A_\theta)=1$;
see \autoref{exa:DimSmallerDimNuc}.

The dimension of the Cuntz semigroup of a \ca{} $A$ can also be computed in many situations of interest beyond the subhomogeneous case:

\begin{enumerate}
\item
If $A$ has real rank zero, then $\dim(\Cu(A))=0$;
see \autoref{prp:CharDim0UnitalSR1}.
\item
If $A$ is unital and of stable rank one, then $\dim(\Cu(A))=0$ if and only if $A$ has real rank zero;
see \autoref{prp:CharDim0UnitalSR1}.
\item
If $A$ is $\mathcal{Z}$-stable, then $\dim(\Cu(A))\leq 1$;
if $A$ is $\mathcal{W}$-stable, that is, $A$ tensorially absorbs the Jacelon-Razac algebra $\mathcal{W}$, then $\dim(\Cu(A))=0$;
see \autoref{prp:ZstableCAlg}.
\item
If $A$ is purely infinite (not necessarily simple), then $\dim(\Cu(A))=0$;
see \autoref{prp:PurInfCAlg}.
\end{enumerate}

Our results allow us to compute the dimension of the Cuntz semigroup of many simple \ca{s}.
In particular, by \autoref{prp:simple_Zstable}, if $A$ is a separable, simple, $\mathcal{Z}$-stable \ca{}, then
\[
\dim(\Cu(A))=\begin{cases}
0, & \text{ if $A$ has real rank zero or if $A$ is stably projectionless} \\
1, & \text{ otherwise.}
\end{cases}
\]

This should be compared to the computation of the nuclear dimension of a separable, simple \ca{} $A$ as accomplished in \cite{CasEvi20NucDimSimpleProjless, CasEviTikWhiWin21NucDimSimple}:
\[
\dimnuc(A)=\begin{cases}
0, & \text{ if $A$ is an AF-algebra} \\
1, & \text{ if $A$ is nuclear, $\mathcal{Z}$-stable, but not an AF-algebra} \\
\infty, & \text{ if $A$ is nuclear and not $\mathcal{Z}$-stable, or $A$ is not nuclear.}
\end{cases}
\]

It will be interesting to tackle the following problem:

\begin{pbmIntro}
Compute the dimension of the Cuntz semigroups of simple \ca{s}.
In particular, what dimensions can occur (beyond zero and one)?
\end{pbmIntro}

Being low-dimension with respect to a certain dimension theory is often considered as a regularity property.
For example, \ca{s} of stable rank one or real rank zero enjoy properties and admit structure results that do not hold for higher stable or real ranks.
For any notion of dimension it is therefore of much interest to study the objects of lowest dimension.
In the last two sections, we begin such a study for our covering dimension of Cuntz semigroups.
We focus here on the simple case -- the non-simple case will be considered in forthcoming work \cite{ThiVil21arX:ZeroDimCu}.

Given a separable, simple \ca{} $A$ of stable rank one (a large class, which includes many interesting examples - see the introduction of \cite{Thi20RksOps}) such that $\Cu(A)$ is zero-dimensional, we show that $\Cu(A)$ is either algebraic (that is, the compact elements are sup-dense) or soft (that is, $\Cu(A)$ contains no nonzero compact elements);
see \autoref{prp:DichotomySimpleDim0}.
In this setting, compact elements in $\Cu(A)$ correspond to the Cuntz classes of projections in the stabilization $A\otimes\KK$, and nonzero soft elements in $\Cu(A)$ correspond to the Cuntz classes of positive elements in $A\otimes\KK$ with spectrum $[0,1]$;
see \cite{BroCiu09IsoHilbModSF}.

To describe the structure of $\Cu(A)$ in the soft case, we introduce the class of elements with \emph{thin boundary} (see \autoref{dfn:ThinBoundary}), which turn out to play a similar role to that of the compact elements in the algebraic case.
We show that an element~$x$ has thin boundary if and only if it is \emph{complementable} in the sense that for every $y$ satisfying $x\ll y$ there exists $z$ such that $x+z=y$;
see \autoref{prp:ThinIffComplementable}.
Further, the elements with thin boundary form a cancellative monoid;
see \autoref{prp:ThinSummary}.
By combining \autoref{prp:projectionless} with \autoref{prp:CharSimpleSoftDim0}, we obtain:

\begin{thmIntro}
Let $A$ be a separable, simple, stably projectionless \ca{} of stable rank one.
Then $\Cu(A)$ is zero-dimensional if and only if the elements with thin boundary are sup-dense.
\end{thmIntro}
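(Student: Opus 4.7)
The plan is to assemble the statement directly from the two cited propositions, after checking that the hypotheses allow \autoref{prp:CharSimpleSoftDim0} to be applied.

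First I would invoke \autoref{prp:projectionless}. In view of \cite{BroCiu09IsoHilbModSF}, the compact elements of $\Cu(A)$ correspond to Cuntz classes of projections in $A\otimes\KK$, so stable projectionlessness of $A$ forces $\Cu(A)$ to contain no nonzero compact elements; that is, $\Cu(A)$ is soft. (I expect this to be the content of \autoref{prp:projectionless}, perhaps stated together with its converse under suitable simplicity hypotheses.) In particular, the dichotomy of \autoref{prp:DichotomySimpleDim0} collapses to its soft alternative, so the question of zero-dimensionality for $\Cu(A)$ falls within the scope of \autoref{prp:CharSimpleSoftDim0}.

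Second, I would apply \autoref{prp:CharSimpleSoftDim0}, which I expect characterizes $\dim(\Cu(A))=0$ in the soft case, for $A$ separable, simple, of stable rank one, by sup-density of the elements with thin boundary. Combining with the previous step yields the claimed equivalence immediately, with no further Cuntz-semigroup manipulation needed.

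The genuine content of the theorem therefore lies in \autoref{prp:CharSimpleSoftDim0}, not in the present combination. I anticipate that the main obstacle there is the nontrivial direction, namely showing that once $\Cu(A)$ is soft, the existence of zero-dimensional covers forces each element to be approximated from below by thin-boundary ones. The converse direction should be more routine: using that thin-boundary elements are complementable (\autoref{prp:ThinIffComplementable}) and form a cancellative monoid (\autoref{prp:ThinSummary}), they can serve as the building blocks of zero-dimensional covers, playing the role that projections do in the algebraic (real rank zero) case treated in \autoref{prp:CharDim0UnitalSR1}.
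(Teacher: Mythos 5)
Your proposal is correct and matches the paper's proof exactly: the theorem is obtained by combining \autoref{prp:projectionless} (which indeed states that $\Cu(A)$ is a simple, stably finite, soft \CuSgp{} satisfying \axiomO{5} and \axiomO{6}) with \autoref{prp:CharSimpleSoftDim0}, using that separability gives a countably based Cuntz semigroup and stable rank one gives weak cancellation. Your assessment that the real work lies in \autoref{prp:CharSimpleSoftDim0} is also accurate.
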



We finish \autoref{sec:simple} by briefly studying the relation between zero-dimensionality, almost divisibility and the Riesz interpolation property; 
see \autoref{prp:Dim0ImplInterpol}.

\subsection*{Acknowledgements}

The authors want to thank the referee for his thorough reading of this work and for providing valuable comments and suggestions which helped to greatly improve the paper.

\section{Preliminaries}

Let $a,b$ be two positive elements in a \ca{} $A$. Recall that $a$ is said to be \emph{Cuntz subequivalent} to $b$, in symbols $a\precsim b$, if there exists a sequence $(r_n)_n$ in $A$ such that $a=\lim_n r_n b r_n^*$. One defines the equivalence relation $\sim$ by writing $a\sim b$ if $a\precsim b$ and $b\precsim a$, and denotes the equivalence class of $a\in A_+$ by $[a]$.

The Cuntz semigroup of $A$, denoted by $\Cu (A)$, is defined as the quotient of $(A\otimes \mathcal{K})_{+}$ by the equivalence relation $\sim$.
Endowed with the addition induced by
$[a]+[b]=\left[ \begin{psmallmatrix}a&0\\ 0& b\end{psmallmatrix}\right]$ 
and the order induced by $\precsim$, the Cuntz semigroup $\Cu (A)$ becomes a positively ordered monoid.

\begin{pgr}
Given a pair of elements $x,y$ in a partially ordered set, we say that $x$ is \emph{way-below} $y$, in symbols $x\ll y$, if for any increasing sequence $(y_n)_n$ for which the supremum exists and is  greater than $y$ one can find $n$ such that $x\leq y_n$.

It was shown in \cite{CowEllIva08CuInv} that the Cuntz semigroup of any \ca{} satisfies the following properties:
\begin{enumerate}
 \item[\axiomO{1}] Every increasing sequence has a supremum.
 \item[\axiomO{2}] Every element can be written as the supremum of an $\ll$-increasing sequence.
 \item[\axiomO{3}] Given $x'\ll x$ and $y'\ll y$, we have $x'+y'\ll x+y$.
 \item[\axiomO{4}] Given increasing sequences $(x_n)_n$ and $(y_n)_n$, we have $\sup_n x_n +\sup_n y_n= \sup_n (x_n +y_n)$.
\end{enumerate}

In a more abstract setting, any positively ordered monoid satisfying \axiomO{1}-\axiomO{4} is called a \emph{$\Cu$-semigroup}.

A map between two $\Cu$-semigroups is called a \emph{generalized $\Cu$-morphism} if it is a positively ordered monoid homomorphism that preserves suprema of increasing sequences. We say that a generalized $\Cu$-morphism is a \emph{$\Cu$-morphism} if it also preserves the way-below relation. Every *-homomorphism $A\to B$ between \ca{s} naturally induces a $\Cu$-morphism $\Cu (A)\to \Cu (B)$; see \cite[Theorem~1]{CowEllIva08CuInv}. 

We denote by $\Cu$ the category whose objects are \CuSgp{s} and whose morphisms are \CuMor{s}. 

The reader is referred to \cite{CowEllIva08CuInv} and \cite{AntPerThi18TensorProdCu} for a further  detailed exposition.
\end{pgr}

\begin{pgr}
In addition to \axiomO{1}-\axiomO{4}, it was proved in \cite[Proposition~4.6]{AntPerThi18TensorProdCu} and \cite{Rob13Cone} that the Cuntz semigroup of a \ca{} always satisfies the following additional properties:
\begin{enumerate}
\item[\axiomO{5}] 
Given $x+y\leq z$, $x'\ll x$ and $y'\ll y$, there exists $c$ such that $x'+c\leq z\leq x+c$ and $y'\ll c$.
\item[\axiomO{6}]
Given $x'\ll x\leq y+z$ there exist $v\leq x,y$ and $w\leq x,z$ such that $x'\leq v+w$.
\end{enumerate}

Axiom \axiomO{5} is often used with $y=0$. In this case, it states that, given $x'\ll x\leq z$, there exists $c$ such that $x'+c\leq z\leq x+c$.

Recall that a $\Cu$-semigroup is said to be \emph{weakly cancellative} if $x\ll y$ whenever $x+z\ll y+z$ for some element $z$.
Stable rank one \ca{s} have weakly cancellative Cuntz semigroups by \cite[Theorem 4.3]{RorWin10ZRevisited}.
\end{pgr}
\begin{pgr}
A subset $D\subseteq S$ in a \CuSgp{} $S$ is said to be \emph{sup-dense} if whenever $x',x\in S$ satisfy $x'\ll x$, there exists $y\in D$ with $x'\leq y\ll x$.
Equivalently, every element in $S$ is the supremum of an increasing sequence of elements in $D$.

We say that a $\Cu$-semigroup is  \emph{countably based} if it contains a countable sup-dense subset. Cuntz semigroups of separable C*-algebras are countably based (see, for example, \cite{AntPerSan11PullbacksCu}).
\end{pgr}

\section{Dimension of Cuntz semigroups}

In this section we introduce a notion of covering dimension for \CuSgp{s} and study some of its main permanence properties while providing a variety of examples; see  \autoref{prp:IdealQuot}, \autoref{prp:limit} and \autoref{prp:DimRetract}.

In \autoref{prp:DimSoftPart} we investigate the relation between the dimension of a simple \CuSgp{} and its soft part, while in \autoref{prp:RZ_mult} we study how the dimension behaves in the presence of certain  $R$-multiplications. This result is then applied to the Cuntz semigroups of purely infinite, $\mathcal{W}$-stable and $\mathcal{Z}$-stable \ca{s}; see \autoref{prp:PurInfCAlg} and \autoref{prp:ZstableCAlg}.

\begin{dfn}
\label{dfn:dim}
Let $S$ be a \CuSgp.
Given $n\in\NN$, we write $\dim (S)\leq n$ if, whenever $x'\ll x\ll y_1+\ldots +y_r$ in $S$, then there exist $z_{j,k}\in S$ for $j=1,\ldots ,r$ and $k=0,\ldots ,n$ such that: 
\begin{enumerate}[(i)]
\item 
$z_{j,k}\ll y_j$ for each $j$ and $k$;
\item 
$x'\ll \sum_{j,k} z_{j,k}$;
\item 
$\sum_{j=1}^r z_{j,k}\ll x$ for each $k=0,\ldots,n$.
\end{enumerate}

We set $\dim(S)=\infty$ if there exists no $n\in\NN$ with $\dim (S)\leq n$.
Otherwise, we let $\dim(S)$ be the smallest $n\in\NN$ such that $\dim(S)\leq n$.
We call $\dim(S)$ the \emph{(covering) dimension} of $S$.
\end{dfn}

\begin{rmk}
\label{rmk:dim}
Recall that the \emph{(covering) dimension} $\dim(X)$ of a topological space $X$ is defined as the smallest $n\in\NN$ such that every finite open cover of $X$ admits a finite open refinement $\mathcal{V}$ such that at most $n+1$ distinct elements in $\mathcal{V}$ have nonempty intersection;
see for example \cite[Definition~3.1.1, p.111]{Pea75DimThy}.

By \cite[Proposition~1.5]{KirWin04CovDimQD}, a normal space $X$ satisfies $\dim(X)\leq n$ if and only if every finite open cover of $X$ admits a finite open refinement $\mathcal{V}$ that is \emph{$(n+1)$-colorable}, that is, there is a decomposition $\mathcal{V}=\mathcal{V}_0\sqcup\ldots\sqcup\mathcal{V}_n$ such that the sets in $\mathcal{V}_j$ are pairwise disjoint for $j=0,\ldots,n$.
(The sets in $\mathcal{V}_j$ have color $j$, and sets of the same color are disjoint.)

\autoref{dfn:dim} is modeled after the above characterization of covering dimension in terms of colorable refinements.
We interpret the expression `$x\ll y_1+\ldots +y_r$' as saying that $x$ is `covered' by $\{y_1,\ldots,y_r\}$.
Then, condition~(i) from \autoref{dfn:dim} means that $\{z_{j,k}\}$ is a `refinement' of $\{y_1,\ldots,y_r\}$;
condition~(ii) means that $\{z_{j,k}\}$ is a cover of $x'$ (which is an approximation of $x$);
and condition~(iii) means that $\{z_{j,k}\}$ is $(n+1$)-colorable; 
see also \autoref{exa:Lsc} below.
\end{rmk}

In \autoref{dfn:dim}, some of the $\ll$-relations may be changed for $\leq$. 

\begin{lma}
\label{prp:DimWeak}
Let $S$ be a \CuSgp{} and $n\in\NN$.
Then we have $\dim(S)\leq n$ if and only if, whenever $x' \ll x \ll y_1+\ldots+y_r$ in $S$, there exist $z_{j,k}\in S$ for $j=1,\ldots ,r$ and $k=0,\ldots ,n$ such that: 
\begin{enumerate}[(1)]
\item 
$z_{j,k} \leq y_j$ for each $j$ and $k$;
\item 
$x' \leq \sum_{j,k} z_{j,k}$;
\item 
$\sum_{j=1}^r z_{j,k}\leq x$ for each $k=0,\ldots,n$.
\end{enumerate}
\end{lma}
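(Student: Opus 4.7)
The plan is as follows. One direction of the equivalence is immediate: the conditions (i)--(iii) of \autoref{dfn:dim} imply (1)--(3) because $\ll$ is stronger than $\leq$. So I only need to show the converse, that the apparently weaker conditions with $\leq$ imply the original $\ll$-conditions.

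The key idea is the standard interpolation technique for \CuSgp{s}. Given $x' \ll x \ll y_1 + \ldots + y_r$, I would first produce room on both sides. Using \axiomO{2}, interpolate $x' \ll x_1 \ll x_2 \ll x$. Then, using \axiomO{2} together with \axiomO{4}, write each $y_j$ as the supremum of a $\ll$-increasing sequence $(y_j^{(m)})_m$; by \axiomO{4}, $\sum_j y_j = \sup_m \sum_j y_j^{(m)}$, so from $x \ll \sum_j y_j$ I obtain an index $m$ with $x \leq \sum_j y_j^{(m)}$, and I set $\hat y_j := y_j^{(m)}$. Using that $a \ll b \leq c$ implies $a \ll c$, I conclude that $x_2 \ll \sum_j \hat y_j$, with $\hat y_j \ll y_j$ for each $j$.

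Next I would apply the weak hypothesis to the triple $x_1 \ll x_2 \ll \sum_j \hat y_j$, obtaining elements $z_{j,k}$ satisfying the $\leq$-versions (1)--(3) with respect to $x_1$, $x_2$ and $\hat y_j$. I claim these same $z_{j,k}$ witness $\dim(S) \leq n$ for the original data: $z_{j,k} \leq \hat y_j \ll y_j$ upgrades to $z_{j,k} \ll y_j$, giving (i); $x' \ll x_1 \leq \sum_{j,k} z_{j,k}$ upgrades to $x' \ll \sum_{j,k} z_{j,k}$, giving (ii); and $\sum_j z_{j,k} \leq x_2 \ll x$ upgrades to $\sum_j z_{j,k} \ll x$, giving (iii).

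There is no genuine obstacle here --- the entire argument is bookkeeping around the elementary facts that $a \ll b \leq c$ and $a \leq b \ll c$ each imply $a \ll c$, both of which follow directly from the definition of the way-below relation applied to any increasing sequence approximating $c$. The only structural ingredients beyond these are \axiomO{2} (to obtain the interpolations $x_1, x_2$ and the approximating sequences $y_j^{(m)}$) and \axiomO{4} (to pass the suprema through the finite sum $\sum_j y_j$).
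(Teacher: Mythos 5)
Your proof is correct and follows essentially the same route as the paper: interpolate new elements strictly between $x'$ and $x$ and below the $y_j$'s, apply the weakened hypothesis to the interpolated triple, and then upgrade each $\leq$ back to $\ll$ using $a\leq b\ll c\Rightarrow a\ll c$ and $a\ll b\leq c\Rightarrow a\ll c$. The only difference is that you spell out the use of \axiomO{2} and \axiomO{4} that the paper's choice of interpolants leaves implicit.
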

\begin{proof}
The forward implication is clear.
To show the converse, let $x'\ll x\ll y_1+\ldots +y_r$ in $S$.
Choose $s',s,y'_{1},\ldots ,y'_{r}\in S$ such that
\[
x'\ll s'\ll s\ll x\ll y'_{1}+\ldots +y'_{r}, \quad 
y_1'\ll y_1, \quad\ldots, \andSep
y_r'\ll y_r.
\]

Applying the assumption, we obtain elements $z_{j,k}$ for $j=1,\ldots ,r$ and $k=0,\ldots ,n$ satisfying properties (1)-(3) for $s'\ll s\ll y'_{1}+\ldots +y'_{r}$.
Then the same elements satisfy (i)-(iii) in \autoref{dfn:dim} for $x'\ll x\ll y_{1}+\ldots+y_{r}$, thus verifying $\dim(S)\leq n$.
\end{proof}

\begin{exa}
\label{exa:Lsc}
Let $X$ be a compact, metrizable space.
We use $\Lsc(X,\NNbar)$ to denote the set of functions $f\colon X\to\NNbar$ that are lower-semicontinuous, that is, for each $n\in\NN$ the set $f^{-1}(\{n,n+1,\ldots,\infty\})\subseteq X$ is open.
We equip $\Lsc(X,\NNbar)$ with pointwise addition and order.
Then $\Lsc(X,\NNbar)$ is a \CuSgp{};
see, for example, \cite[Corollary~4.22]{Vil21arX:CommCuAI}.
(If $X$ is finite-dimensional, this also follows from 
\cite[Theorem~5.15]{AntPerSan11PullbacksCu}.)
We will show that
\[
\dim(\Lsc(X,\NNbar)) \geq \dim(X).
\]
(The reverse inequality also holds and can be verified through a direct yet elaborate argument.
We defer its proof to \autoref{cor:Lsc}, where we will deduce it easily from the computation of $\dim(C(X))$.) 

Set $n:=\dim(\Lsc(X,\NNbar))$, which we may assume to be finite.
To verify that $\dim(X)\leq n$, let $\mathcal{U}=\{U_{1},\ldots,U_{r}\}$ be a finite open cover of $X$.
We need to find a $(n+1)$-colourable, finite, open refinement of $\mathcal{U}$.

We use $\chi_U$ to denote the characteristic function of a subset $U\subseteq X$.
Given open subsets $U,V\subseteq X$, we have $\chi_U\ll \chi_V$ if and only if $\overline{U}\subseteq V$, that is, $U$ is compactly contained in $V$.
(See the proof of \cite[Corollary~4.22]{Vil21arX:CommCuAI}.)
Then
\[
\chi_X \ll \chi_X \ll \chi_{U_{1}}+\ldots+\chi_{U_{r}}.
\]

Applying that $\dim(\Lsc(X,\NNbar))\leq n$, we obtain elements $z_{j,k}\in\Lsc(X,\NNbar)$ for $j=1,\ldots,r$ and $k=0,\ldots,n$ such that
\begin{enumerate}
\item[(i)]
$z_{j,k}\ll \chi_{U_j}$ for every $j,k$; 
\item[(ii)]
$\chi_X \ll \sum_{j,k} z_{j,k}$; 
\item[(iii)]
$\sum_{j} z_{j,k}\ll \chi_X$ for every $k$.
\end{enumerate}

For each $j$ and $k$, condition~(i) implies that $z_{j,k}=\chi_{V_{j,k}}$ for some open subset $V_{j,k}\subseteq U_j$.
Condition~(ii) implies that $X$ is covered by the sets $V_{j,k}$.
Thus, the family $\mathcal{V}:=\{V_{j,k}\}$ is a finite, open refinement of $\mathcal{U}$.
For each $k$, condition (iii) implies that the sets $V_{1,k},\ldots,V_{r,k}$ are pairwise disjoint.
Thus, $\mathcal{V}$ is $(n+1)$-colourable, as desired.
\end{exa}

Recall that an \emph{ideal} $I$ of a $\Cu$-semigroup $S$ is a downward-hereditary submonoid closed under suprema of increasing sequences; see \cite[Section 5]{AntPerThi18TensorProdCu}.

Given $x,y\in S$, we write $x\leq_I y$ if there exists $z\in I$ such that $x\leq y+z$. We set $x\sim_I y$ if $x\leq_I y$ and $y\leq_I x$. The quotient $S/\sim_I$ endowed with the induced sum and order $\leq_I$ is denoted by $S/I$.

 As shown in \cite[Lemma~5.1.2]{AntPerThi18TensorProdCu}, $S/I$ is a $\Cu$-semigroup and the quotient map $S\to S/I$ is a $\Cu$-morphism.

\begin{prp}
\label{prp:IdealQuot}
Let $S$ be a \CuSgp, and let $I\subseteq S$ be an ideal.
Then:
\[
\dim(I)\leq\dim(S), \andSep
\dim(S/I)\leq\dim (S).
\]
\end{prp}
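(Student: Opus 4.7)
\medskip

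I would handle the two inequalities separately, starting with the easier one.

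For the bound $\dim(I)\leq\dim(S)$, the main point is to observe that the way-below relation in $I$ agrees with the restriction of the way-below relation in $S$. One inclusion is clear, since an increasing sequence in $I$ is a fortiori an increasing sequence in $S$. For the converse, given $a\ll b$ in $I$, I pick a $\ll$-increasing sequence $(b_n)_n$ in $S$ with supremum $b$; since $b\in I$ and $I$ is downward-hereditary, every $b_n$ lies in $I$, and closure under suprema of increasing sequences ensures that $\sup_n b_n=b$ also inside $I$. Hence $a\leq b_N$ for some $N$, and $a\leq b_N\ll b$ yields $a\ll b$ in $S$. With this identification in hand, given any relation $x'\ll x\ll y_1+\ldots+y_r$ inside $I$, the same relation holds in $S$; applying $\dim(S)\leq n$ produces witnesses $z_{j,k}\in S$ with $z_{j,k}\ll y_j$, which then automatically lie in $I$ by downward-heredity, and conditions (i)--(iii) of \autoref{dfn:dim} transfer unchanged.

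The bound $\dim(S/I)\leq\dim(S)$ is subtler, and I would begin with a lifting observation: for $a,b\in S$ with $\pi(a)\ll\pi(b)$ in $S/I$, given any $a'\ll a$ in $S$, there exists $e\in I$ with $a'\ll b+e$ in $S$. To prove this, choose a $\ll$-increasing sequence $(b_n)_n$ in $S$ with $\sup_n b_n=b$; since $\pi$ is a $\Cu$-morphism, $\pi(b)=\sup_n \pi(b_n)$, so from $\pi(a)\ll\pi(b)$ one gets $\pi(a)\leq\pi(b_N)$ for some $N$. By definition of the order on $S/I$, this produces $e\in I$ with $a\leq b_N+e$, and combining $a'\ll a\leq b_N+e\leq b+e$ yields $a'\ll b+e$.

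For the main construction, given $\bar{x}'\ll\bar{x}\ll\bar{y}_1+\ldots+\bar{y}_r$ in $S/I$, I would interpolate $\bar{x}'\ll\bar{u}\ll\bar{x}$ in $S/I$ and fix arbitrary lifts $\tilde{x}_0$ of $\bar{x}$ and $\tilde{y}_j$ of $\bar{y}_j$ in $S$. Using an approximating sequence of $\tilde{x}_0$, I pick $\tilde{u}\ll\tilde{x}_0$ with $\pi(\tilde{u})\geq\bar{u}$, and then using an approximating sequence of $\tilde{u}$ I pick $\tilde{u}''\ll\tilde{u}$ with $\pi(\tilde{u}'')\geq\bar{x}'$. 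Applying the lifting lemma with $a=\tilde{x}_0$, $a'=\tilde{u}$, $b=\tilde{y}_1+\ldots+\tilde{y}_r$ yields some $\tilde{e}\in I$ with $\tilde{u}\ll\tilde{y}_1+\ldots+\tilde{y}_r+\tilde{e}$ in $S$. Setting $\tilde{y}_{r+1}:=\tilde{e}$ and applying $\dim(S)\leq n$ to $\tilde{u}''\ll\tilde{u}\ll\tilde{y}_1+\ldots+\tilde{y}_{r+1}$ produces $\tilde{z}_{j,k}\in S$ satisfying (i)--(iii). Letting $\bar{z}_{j,k}:=\pi(\tilde{z}_{j,k})$, the entries with $j=r+1$ vanish since they lie in $I$, conditions (i) and (ii) follow from the fact that $\pi$ preserves $\ll$ together with $\pi(\tilde{u}'')\geq\bar{x}'$, and the critical condition (iii) follows because $\tilde{u}\ll\tilde{x}_0$ forces $\pi(\tilde{u})\leq\pi(\tilde{x}_0)=\bar{x}$, so that $\sum_j\tilde{z}_{j,k}\ll\tilde{u}$ projects to $\sum_j\bar{z}_{j,k}\ll\bar{x}$. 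The main obstacle throughout is precisely this control from above: an arbitrary lift of a way-below relation in $S/I$ can project to an element strictly larger than $\bar{x}$, so the conclusion of (iii) is not preserved. The resolution is the two-step lift where $\tilde{x}_0$ is chosen first as a full lift of $\bar{x}$ and only then the auxiliary $\tilde{u}$ and $\tilde{u}''$ are constructed strictly inside $\tilde{x}_0$, guaranteeing the required upper bound after projection.
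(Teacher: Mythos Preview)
Your proof is correct and follows essentially the same approach as the paper. The paper's argument is somewhat more streamlined: for the quotient it writes elements directly as $[x],[y_j]$ (implicitly fixing lifts), observes from $[x]\ll[y_1]+\ldots+[y_r]$ that $x\leq y_1+\ldots+y_r+y_{r+1}$ for some $y_{r+1}\in I$, then applies $\dim(S)\leq n$ to suitable $x''\ll x'\ll x$ chosen so that $[u]\leq[x'']$---exactly your two-step lift, but without isolating a separate lifting lemma.
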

\begin{proof}
Set $n:=\dim(S)$, which we may assume to be finite, since otherwise there is nothing to prove. 
It is straightforward to show that $\dim(I)\leq n$ using that $I$ is downward-hereditary.
Given $x\in S$, we use $[x]$ to denote its equivalence class in~$S/I$.

To verify $\dim(S/I)\leq n$, let $[u]\ll [x]\ll [y_{1}]+\ldots +[y_{r}]$ in $S/I$.
Then there exists $y_{r+1}\in I$ such that $x\leq y_{1}+\ldots+y_{r}+y_{r+1}$ in $S$. 
Using that the quotient map $S\rightarrow S/I$ preserves suprema of increasing sequences, we can choose $x'',x'\in S$ such that
\[
x''\ll x'\ll x, \andSep
[u]\leq [x''].
\]
Applying the definition of $\dim (S)\leq n$ to $
x''\ll x'\ll y_{1}+\ldots+y_{r}+y_{r+1}$,
we obtain elements $z_{j,k}\in S$ for $j=1,\ldots,r+1$ and $k=0,\ldots,n$ such that $z_{j,k}\ll y_{j}$ for every $j,k$, such that $x''\ll \sum_{j,k} z_{j,k}$, and such that $\sum_{j} z_{j,k}\ll x'$ for every $k$.

Since $y_{r+1}\in I$, we have $z_{r+1,k}\in I$ and thus $[z_{r+1,k}]=0$ in $S/I$ for $k=0,\ldots,n$.
Using also that the quotient map $S\rightarrow S/I$ is $\ll$-preserving, we see that the elements $[z_{j,k}]$ for $j=1,\ldots,r$ and $k=0,\ldots,n$ have the desired properties. 
\end{proof}

\begin{pbm}
Let $S$ be a \CuSgp, and let $I\subseteq S$ be an ideal.
Can we bound $\dim(S)$ in terms of $\dim(I)$ and $\dim(S/I)$?
In particular, do we always have $\dim(S)\leq\dim(I)+\dim(S/I)+1$?
\end{pbm}

Given \CuSgp{s} $S$ and $T$, we use $S\oplus T$ to denote the Cartesian product $S\times T$ equipped with elementwise addition and order.
It is straightforward to verify that $S\oplus T$ is a \CuSgp{} and that $S\oplus T$ is both the product and coproduct of $S$ and $T$ in the category \CatCu;
see also \cite[Proposition~3.10]{AntPerThi20CuntzUltraproducts}.
We omit the straightforward proof of the next result.

\begin{prp}
\label{prp:sum}
Let $S$ and $T$ be \CuSgp{s}. 
Then
\[
\dim(S\oplus T) = \max\{\dim(S),\dim(T)\}.
\]
\end{prp}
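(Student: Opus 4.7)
The plan is to prove the two inequalities separately; both are essentially routine.

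For the lower bound $\max\{\dim(S),\dim(T)\}\leq\dim(S\oplus T)$, I would invoke \autoref{prp:IdealQuot}. The subsets $S\times\{0\}$ and $\{0\}\times T$ are easily checked to be ideals of $S\oplus T$ (they are downward-hereditary submonoids closed under suprema of increasing sequences, since the order and supremum operations on $S\oplus T$ are coordinatewise). These ideals are isomorphic as \CuSgp{s} to $S$ and $T$ respectively, so \autoref{prp:IdealQuot} yields $\dim(S)\leq\dim(S\oplus T)$ and $\dim(T)\leq\dim(S\oplus T)$.

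For the upper bound, set $n:=\max\{\dim(S),\dim(T)\}$, which we may assume to be finite. Suppose $(x',y')\ll(x,y)\ll(a_1,b_1)+\ldots+(a_r,b_r)$ in $S\oplus T$. Since the way-below relation, the order, and the addition on $S\oplus T$ are all defined coordinatewise, this is equivalent to
\[
x'\ll x\ll a_1+\ldots+a_r \text{ in } S, \andSep y'\ll y\ll b_1+\ldots+b_r \text{ in } T.
\]
Applying $\dim(S)\leq n$ to the first chain and $\dim(T)\leq n$ to the second (if either dimension is strictly less than $n$, we simply pad the collection with zeros to obtain $n+1$ colors in both), I obtain elements $z^S_{j,k}\in S$ and $z^T_{j,k}\in T$, for $j=1,\ldots,r$ and $k=0,\ldots,n$, satisfying conditions (i)--(iii) of \autoref{dfn:dim} in $S$ and $T$ respectively. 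Setting $z_{j,k}:=(z^S_{j,k},z^T_{j,k})\in S\oplus T$ and again using that all three operations are coordinatewise, the three conditions of \autoref{dfn:dim} for $(x',y')\ll(x,y)\ll(a_1,b_1)+\ldots+(a_r,b_r)$ follow at once.

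There is no real obstacle here; the only minor point to keep track of is that coordinatewise operations respect the way-below relation on a finite product, so that the hypotheses and conclusions of \autoref{dfn:dim} decouple cleanly across the two summands. This is why the paper simply states the result without proof.
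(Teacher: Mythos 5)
Your proposal is correct, and since the paper explicitly omits the proof as straightforward, your argument supplies precisely the routine verification intended: the lower bound via \autoref{prp:IdealQuot} applied to the ideals $S\oplus\{0\}$ and $\{0\}\oplus T$, and the upper bound by observing that order, addition, suprema, and hence the way-below relation on $S\oplus T$ are all coordinatewise, so the data in \autoref{dfn:dim} decouple and recombine componentwise. The only points needing care---the coordinatewise characterization of $\ll$ and the padding by zeros when the two dimensions differ (equivalently, monotonicity of the condition $\dim\leq n$ in $n$, using $0\ll y$)---are both handled correctly.
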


By \cite[Corollary~3.1.11]{AntPerThi18TensorProdCu}, the category $\Cu$ admits inductive limits.
(The sequential case was previously shown in \cite[Theorem~2]{CowEllIva08CuInv}.)
The next result provides a useful characterization of inductive limits in~$\Cu$.

\begin{lma}
\label{prp:CharLimit}
Let $((S_\lambda)_{\lambda\in\Lambda},(\varphi_{\mu,\lambda})_{\lambda\leq\mu \text{ in } \Lambda})$ be an inductive system in~\CatCu, that is, $\Lambda$ is a directed set, each $S_\lambda$ is a \CuSgp{}, and for $\lambda\leq\mu$ in $\Lambda$ we have a connecting \CuMor{} $\varphi_{\mu,\lambda}\colon S_\lambda\to S_\mu$ such that $\varphi_{\lambda,\lambda}=\id_{S_\lambda}$ for every $\lambda\in\Lambda$ and $\varphi_{\nu,\mu}\circ\varphi_{\mu,\lambda}=\varphi_{\nu,\lambda}$ for all $\lambda\leq\mu\leq\nu$ in $\Lambda$.

Then a \CuSgp{} $S$ together with \CuMor{s} $\varphi_{\lambda}\colon S_\lambda\to S$ for $\lambda\in\Lambda$ is the inductive limit in $\Cu$ of the system $((S_\lambda)_{\lambda\in\Lambda},(\varphi_{\mu,\lambda})_{\lambda\leq\mu \text{ in } \Lambda})$ if and only if the following conditions are satisfied:
\begin{enumerate}
\item[(L0)]
we have $\varphi_{\mu}\circ\varphi_{\mu,\lambda}=\varphi_{\lambda}$ for all $\lambda\leq\mu$ in $\Lambda$;
\item[(L1)]
if $x_\lambda',x_\lambda\in S_\lambda$ and $x_\mu\in S_\mu$ satisfy $x_\lambda'\ll x_\lambda$ and $\varphi_{\lambda}(x_\lambda)\leq\varphi_{\mu}(x_\mu)$, then there exists $\nu\geq\lambda,\mu$ such that $\varphi_{\nu,\lambda}(x_\lambda')\ll\varphi_{\nu,\mu}(x_\mu)$;
\item[(L2)]
for all $x',x\in S$ satisfying $x'\ll x$ there exists $x_\lambda\in S_\lambda$ such that  $x'\ll\varphi_\lambda(x_\lambda)\ll x$.
\end{enumerate}
\end{lma}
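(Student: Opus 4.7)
The statement is a characterization, so both implications must be addressed.

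For the forward direction, suppose $(S, \{\varphi_\lambda\})$ is the inductive limit in $\Cu$. Condition (L0) is part of the definition of a cocone. For (L1) and (L2), I would invoke the explicit construction of inductive limits in $\Cu$ from \cite[Corollary~3.1.11]{AntPerThi18TensorProdCu}: elements of the limit are represented by equivalence classes of $\ll$-increasing sequences $(\varphi_{\lambda_n}(y_n))_n$ with $y_n \in S_{\lambda_n}$. From this description (L2) follows immediately, and (L1) expresses the fact that $\ll$-comparisons between representatives are already witnessed at some common cofinal index $\nu$.

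For the backward direction, I assume (L0)-(L2) and verify the universal property of an inductive limit. Given any cocone $(T, \{\psi_\lambda\})$ in $\Cu$, I construct a unique \CuMor{} $\Psi\colon S \to T$ with $\Psi\circ\varphi_\lambda = \psi_\lambda$ for all $\lambda$. For $x \in S$, pick an $\ll$-increasing sequence $(x_n)_n$ with $\sup_n x_n = x$ (axiom \axiomO{2}), and by (L2) choose $\lambda_n \in \Lambda$ and $y_n \in S_{\lambda_n}$ with $x_n \ll \varphi_{\lambda_n}(y_n) \ll x_{n+1}$. Define
\[
\Psi(x) := \sup_n \psi_{\lambda_n}(y_n).
\]
To see the sequence on the right is $\leq$-increasing, note that $\varphi_{\lambda_n}(y_n) \ll \varphi_{\lambda_{n+1}}(y_{n+1})$ in $S$; choosing a shrinking $y_n^\flat \ll y_n$ and applying (L1) produces $\nu \geq \lambda_n,\lambda_{n+1}$ with $\varphi_{\nu,\lambda_n}(y_n^\flat) \ll \varphi_{\nu,\lambda_{n+1}}(y_{n+1})$, and applying the \CuMor{} $\psi_\nu$ yields $\psi_{\lambda_n}(y_n^\flat) \ll \psi_{\lambda_{n+1}}(y_{n+1})$. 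The same lifting mechanism gives well-definedness of $\Psi$ (independence of all choices). That $\Psi$ is additive, monotone and preserves suprema of increasing sequences is then routine from \axiomO{4}; preservation of $\ll$ follows from the $\ll$-increasing refinement above. Uniqueness is forced by $\Psi\circ\varphi_\lambda = \psi_\lambda$ together with (L2), which guarantees that $\Psi$ is determined on a sup-dense subset of $S$.

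The main obstacle is the well-definedness and $\ll$-preservation of $\Psi$, both of which rest on the technical content of (L1): one must transfer $\leq$-comparisons in $S$ between images from different $S_\lambda$ up to a common cofinal $S_\nu$ at which the corresponding strict inequality genuinely holds, and then push down to $T$ via $\psi_\nu$. Once these consistency checks are in place, the identity $\Psi\circ\varphi_\mu = \psi_\mu$ is immediate from the construction (for $y \in S_\mu$, approximate $y$ by a $\ll$-increasing sequence $(y_n)_n$ in $S_\mu$, apply the construction to $x = \varphi_\mu(y)$ with $\lambda_n = \mu$, and use continuity of $\psi_\mu$).
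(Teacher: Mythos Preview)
Your approach is correct in outline but takes a genuinely different route from the paper. The paper does not verify the universal property directly; instead it invokes the machinery of \cite{AntPerThi20CuntzUltraproducts} and \cite{AntPerThi18TensorProdCu}: $\CatCu$ sits as a full reflective subcategory of an auxiliary category $\CatW$, the inductive limit in $\CatW$ is the set-theoretic limit $S_{\mathrm{alg}}$ equipped with an auxiliary relation $\prec$, and the $\CatCu$-limit is the reflection of $S_{\mathrm{alg}}$. The paper then quotes a characterization of this reflection by two conditions (R1) (an order-embedding condition) and (R2) (dense image), and observes that (R1) and (R2) translate exactly into (L1) and (L2). So the paper reduces the lemma to an already-established characterization of $\CatCu$-completions, whereas you rebuild the universal property from scratch.

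Your direct argument is more elementary and self-contained, at the cost of the bookkeeping you acknowledge: well-definedness of $\Psi$ needs an interleaving argument with (L1), and $\ll$-preservation requires one more pass through (L1) than you indicate (given $x'\ll x$, use (L2) to interpolate $x'\ll\varphi_\lambda(y)\ll x$, pick $y'\ll y$ with $x'\leq\varphi_\lambda(y')$, then lift via (L1) and push to $T$). The paper's approach avoids these verifications by outsourcing them to \cite[Theorem~3.1.8]{AntPerThi18TensorProdCu}, but assumes familiarity with the $\CatW$-framework. Both are valid; yours is arguably the more transparent proof of this particular lemma, while the paper's is quicker given the cited background.
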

\begin{proof}
It is shown in \cite[Theorem~2.9]{AntPerThi20CuntzUltraproducts} that \CatCu{} is a full, reflective subcategory of a more algebraic category $\CatW$ defined in \cite[Definition~2.5]{AntPerThi20CuntzUltraproducts}.
The inductive limit in \CatCu{} can therefore be constructed by applying the reflection functor $\CatW\to\CatCu$ to the inductive limit in $\CatW$.

A \emph{$\CatW$-semigroup} is a commutative monoid $S$ together with a transitive, binary relation $\prec$ such that $0\prec x$ for every $x\in S$, such that for every $x\in S$ there is a $\prec$-increasing, $\prec$-cofinal sequence in $x^\prec$, and such that for every $x,y\in S$ the set $x^\prec+y^\prec$ is contained and $\prec$-cofinal in $(x+y)^\prec$;
here, we use the notation $z^\prec := \{ z' : z'\prec z\}$ for $z\in S$, and a subset $C$ of $z^\prec$ is said to be $\prec$-cofinal if for every $z'\in z^\prec$ there exists $c\in C$ with $z'\prec c$.

A \emph{$\CatW$-morphism} between $\CatW$-semigroups $S$ and $T$ is a $\prec$-preserving monoid morphism $\varphi\colon S\to T$ such that for every $x\in S$ the set $\varphi(x^\prec)\subseteq \varphi(x)^\prec$ is $\prec$-cofinal.
Then $\CatW$ is defined as the category of $\CatW$-semigroups and $\CatW$-morphisms.
The inclusion $\CatCu\to\CatW$ is given by mapping a \CuSgp{} $S$ to the underlying monoid of~$S$ together with $\ll$.

To construct the inductive limit of the system $((S_\lambda)_{\lambda\in\Lambda},(\varphi_{\mu,\lambda})_{\lambda\leq\mu \text{ in } \Lambda})$ in $\CatW$, consider the equivalence relation $\sim$ on the disjoint union $\bigsqcup_\lambda S_\lambda$ given by $x_\lambda\sim x_\mu$ (for $x_\lambda\in S_\lambda$ and $x_\mu\in S_\mu$) if there exists $\nu\geq\lambda,\mu$ such that $\varphi_{\nu,\lambda}(x_\lambda)=\varphi_{\nu,\mu}(x_\mu)$.
The set of equivalence classes is the set-theoretic inductive limit, which we denote by $S_{\mathrm{alg}}$.
We write $[x_\lambda]$ for the equivalence class of $x_\lambda\in S_\lambda$.

We define an addition $+$ and a binary relation $\prec$ on $S_{\mathrm{alg}}$ as follows:
Given $x_\lambda\in S_\lambda$ and $x_\mu\in S_\mu$, set
\[
[x_\lambda]+[x_\mu]:=[\varphi_{\nu,\lambda}(x_\lambda)+\varphi_{\nu,\mu}(x_\mu)]
\]
for any $\nu\geq\lambda,\mu$.
Further, set $[x_\lambda]\prec[x_\mu]$ if there exists $\nu\geq\lambda,\mu$ such that $\varphi_{\nu,\lambda}(x_\lambda)\ll\varphi_{\nu,\mu}(x_\mu)$ in $S_\nu$.
This gives $S_{\mathrm{alg}}$ the structure of a $\CatW$-semigroup, which together with the natural maps $S_\lambda\to S_{\mathrm{alg}}, x_\lambda\mapsto[x_\lambda]$, is the inductive limit in $\CatW$.

The reflection of $S_{\mathrm{alg}}$ in $\CatCu$ is a \CuSgp{} $S$ together with a (universal) $\CatW$-morphism $\alpha\colon S_{\mathrm{alg}}\to S$.
Using \cite[Theorem~3.1.8]{AntPerThi18TensorProdCu}, $S$ and $\alpha$ are characterized by the following conditions:
\begin{enumerate}
\item[(R1)]
$\alpha$ is an order-embedding in the sense that $[x_\lambda]^\prec\subseteq[x_\mu]^\prec$ if (and only if) $\alpha([x_\lambda])\leq\alpha([x_\mu])$, for any $x_\lambda\in S_\lambda$ and $x_\mu\in S_\mu$;
\item[(R2)]
$\alpha$ has dense image in the sense that for all $x',x\in S$ satisfying $x'\ll x$ there exists $x_\lambda\in S_\lambda$ such that $x'\ll\alpha([x_\lambda])\ll x$.
\end{enumerate} 

Now the result follows using that the inductive limit of $((S_\lambda)_{\lambda\in\Lambda},(\varphi_{\mu,\lambda})_{\lambda\leq\mu \text{ in } \Lambda})$ in $\CatCu$ is given as the reflection of $S_{\mathrm{alg}}$ in $\CatCu$.
Here, condition~(R1) for the reflection of $S_{\mathrm{alg}}$ in $\CatCu$ corresponds to~(L1), and similarly for (R2) and (L2).
\end{proof}

\begin{prp}
\label{prp:limit}
Let $S=\varinjlim_{\lambda\in\Lambda} S_{\lambda}$ be an inductive limit of \CuSgp{s}. 
Then $\dim(S)\leq \liminf_{\lambda} \dim(S_{\lambda})$.
\end{prp}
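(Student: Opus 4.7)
The plan is to reduce the estimate $\dim(S) \leq n$, where $n := \liminf_\lambda \dim(S_\lambda)$ (which we may assume finite), to the corresponding estimate at some stage $S_\nu$ with $\dim(S_\nu) \leq n$---such $\nu$ are cofinal in $\Lambda$ by the definition of $\liminf$. Given $x' \ll x \ll y_1 + \ldots + y_r$ in $S$, I would pull this data back to a common $S_\lambda$ using condition (L2) of \autoref{prp:CharLimit}, promote the resulting $S$-level inequalities to $\ll$-relations at a later stage via (L1), apply the hypothesis $\dim(S_\nu) \leq n$ there, and push the resulting witnesses forward to $S$.

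First, I would use \axiomO{2} and \axiomO{4} to refine the data, producing $x' \ll x_1 \ll x_2 \ll x_3 \ll x$ and $y_j^{(0)} \ll y_j^{(1)} \ll y_j$ with $x \leq \sum_j y_j^{(0)}$; in particular, $x_3 \ll \sum_j y_j^{(1)}$. Applying (L2) to $x_1 \ll x_2$, to $x_2 \ll x_3$, and to each $y_j^{(0)} \ll y_j^{(1)}$ yields pullbacks at various indices, which I would direct to a common $\lambda$, producing $\hat{u}, \hat{v}, \hat{y}_j \in S_\lambda$ with $\varphi_\lambda(\hat{u}) \ll x_2 \ll \varphi_\lambda(\hat{v}) \ll x_3$ and $y_j^{(0)} \ll \varphi_\lambda(\hat{y}_j) \ll y_j$. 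Next, I would interpolate once more inside $S_\lambda$ via \axiomO{2}, picking $\hat{u}' \ll \hat{u}$ and $\hat{v}' \ll \hat{v}$ from the $\ll$-increasing sequences converging to $\hat{u}$ and $\hat{v}$; since $\varphi_\lambda$ preserves these suprema while $x' \ll x_1 \ll \varphi_\lambda(\hat{u})$ and $x_2 \ll \varphi_\lambda(\hat{v})$, these interpolants can be chosen large enough that $x' \leq \varphi_\lambda(\hat{u}')$ and $x_2 \leq \varphi_\lambda(\hat{v}')$.

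Applying (L1) to $\hat{u}' \ll \hat{u}$ in $S_\lambda$ together with $\varphi_\lambda(\hat{u}) \leq \varphi_\lambda(\hat{v}')$, and to $\hat{v}' \ll \hat{v}$ together with $\varphi_\lambda(\hat{v}) \leq \varphi_\lambda(\sum_j \hat{y}_j)$, and then directing the two outcomes to a common index $\nu_0$, I obtain $u^*, v^*, y_j^* \in S_{\nu_0}$ (the images of $\hat{u}', \hat{v}', \hat{y}_j$) satisfying $u^* \ll v^* \ll \sum_j y_j^*$. Choosing $\nu \geq \nu_0$ with $\dim(S_\nu) \leq n$ and pushing forward, the hypothesis at stage $\nu$ yields witnesses $z_{j,k}^{(\nu)} \in S_\nu$ realizing conditions (i)--(iii) of \autoref{dfn:dim} at level $\nu$. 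Setting $z_{j,k} := \varphi_\nu(z_{j,k}^{(\nu)}) \in S$, the corresponding conditions in $S$ follow from a routine diagram chase using that $\varphi_\nu$ preserves finite sums and the $\ll$-relation together with the identity $\varphi_\nu \circ \varphi_{\nu,\lambda} = \varphi_\lambda$ from (L0); the key inequalities are $x' \leq \varphi_\lambda(\hat{u}')$ for (ii) and $\varphi_\lambda(\hat{v}') \leq \varphi_\lambda(\hat{v}) \ll x$ for (iii).

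The main technical obstacle is this double pullback--promotion step: the interpolants $\hat{u}', \hat{v}' \in S_\lambda$ must be strict $\ll$-predecessors of $\hat{u}, \hat{v}$ in $S_\lambda$ so that (L1) applies to upgrade $S$-level inequalities to finite-stage $\ll$-relations, while their images under $\varphi_\lambda$ must simultaneously remain large enough to dominate $x'$ and small enough to be dominated by $x$. Reconciling these competing demands relies crucially on using \axiomO{2} inside $S_\lambda$ in combination with the fact that $\varphi_\lambda$, being a \CuMor, preserves suprema of increasing sequences.
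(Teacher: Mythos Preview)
Your proposal is correct and follows essentially the same approach as the paper's proof: pull the data back to a common stage via (L2), interpolate inside that stage to obtain $\ll$-predecessors so that (L1) can promote the $S$-level inequalities to finite-stage $\ll$-relations, pass to a cofinal $\nu$ with $\dim(S_\nu)\leq n$, and push the resulting witnesses forward via $\varphi_\nu$. The only differences are bookkeeping---the paper interpolates twice below a single pullback $a_\lambda$ (obtaining $a_\lambda''\ll a_\lambda'\ll a_\lambda$) and therefore needs only one application of (L1), whereas you pull back $\hat{u}$ and $\hat{v}$ separately and invoke (L1) twice---but the underlying mechanism is identical.
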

\begin{proof}
Let $\varphi_\lambda\colon S_\lambda\to S$ be the \CuMor{s} into the inductive limit.
We use that~$S$ and the $\varphi_\lambda$'s  satisfy (L0)-(L2) from \autoref{prp:CharLimit}.
Set $n:=\liminf_{\lambda} \dim(S_{\lambda})$, which we may assume to be finite. 
To verify $\dim(S)\leq n$, let $x'\ll x\ll y_{1}+\ldots+y_{r}$ in $S$.
Choose $y_1',\ldots,y_r'\in S$ such that
\[
x \ll y_1'+\ldots+y_r', \quad y_1'\ll y_1, \quad\ldots, \andSep y_r'\ll y_r.
\]

Using~(L2), we obtain $a_\lambda\in S_\lambda$ such that $
x' \ll \varphi_\lambda(a_\lambda) \ll x
$. 
Analogously, we obtain $b_{\lambda_k}\in S_{\lambda_k}$ such that $y_k' \ll \varphi_{\lambda_k}(b_{\lambda_k}) \ll y_k$ for $k=1,\ldots,r$.

Using that $\varphi_\lambda$ is a \CuMor{}, we obtain $a_\lambda'',a_\lambda'\in S_\lambda$ such that
\[
x' 
\ll \varphi_\lambda(a_\lambda'') 
\ll \varphi_\lambda(a_\lambda') 
\ll \varphi_\lambda(a_\lambda) 
\ll x, \andSep
a_\lambda''\ll a_\lambda'\ll a_\lambda.
\]

Choose $\mu\in\Lambda$ such that $\mu\geq\lambda,\lambda_1,\ldots,\lambda_r$, and set
\[
a'':=\varphi_{\mu,\lambda}(a_\lambda''),\quad
a':=\varphi_{\mu,\lambda}(a_\lambda'),\quad
a:=\varphi_{\mu,\lambda}(a_\lambda),
\]
and
\[
b_1:=\varphi_{\mu,\lambda_1}(b_{\lambda_1}), \quad\ldots, \andSep
b_r:=\varphi_{\mu,\lambda_r}(b_{\lambda_r}).
\]
Hence,
\[
\varphi_\mu(a)
= \varphi_\mu( \varphi_{\mu,\lambda}(a_\lambda) )
= \varphi_\lambda(a_\lambda) 
\ll x 
\ll y_1'+\ldots+y_r'
\ll \varphi_\mu(b_1+\ldots+b_r).
\]
Applying~(L1), we obtain $\nu\geq\mu$ such that $\varphi_{\nu,\mu}(a') \ll \varphi_{\nu,\mu}(b_1+\ldots+b_r)$.

Using that $\liminf_\lambda \dim(S_{\lambda})\leq n$, we may also assume that $\dim(S_\nu)\leq n$.
Applying $\dim(S_\nu)\leq n$ to
\[
\varphi_{\nu,\mu}(a'')
\ll \varphi_{\nu,\mu}(a')
\ll \varphi_{\nu,\mu}(b_1)+\ldots+\varphi_{\nu,\mu}(b_r),
\]
we obtain elements $z_{j,k}\in S_\nu$ for $j=1,\ldots,r$ and $k=0,\cdots,n$ satisfying properties (i)-(iii) from \autoref{dfn:dim}.
It is now easy to check that the elements $\varphi_{\nu}(z_{j,k})\in S$ have the desired properties to verify $\dim(S)\leq n$.
\end{proof}

\begin{prp}
\label{prp:PermanenceDimCu}
Given a \ca{} $A$ and a closed, two-sided ideal $I\subseteq A$, we have
\[
\dim(\Cu(I)) \leq \dim(\Cu(A)), \andSep
\dim(\Cu(A/I)) \leq \dim(\Cu(A)).
\]

Given \ca{s} $A$ and $B$, we have
\[
\dim(\Cu(A\oplus B)) = \max\{ \dim(\Cu(A)),  \dim(\Cu(B)) \}.
\]

Given an inductive limit of \ca{s} $A=\varinjlim_\lambda A_\lambda$, we have
\[
\dim(\Cu(A)) \leq \liminf_\lambda \dim(\Cu(A_\lambda)).
\]
\end{prp}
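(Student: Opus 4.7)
The plan is to derive each of the three statements from the corresponding abstract permanence result already established for \CuSgp{s} -- namely \autoref{prp:IdealQuot}, \autoref{prp:sum}, and \autoref{prp:limit} -- by invoking standard facts about how the $\Cu$-functor interacts with ideals, quotients, direct sums and inductive limits of \ca{s}.

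For the ideal and quotient bounds, the key inputs are that for a closed, two-sided ideal $I\subseteq A$, the inclusion $I\hookrightarrow A$ induces an order-embedding $\Cu(I)\to \Cu(A)$ whose image is an ideal in the sense of \cite{AntPerThi18TensorProdCu}, and that the quotient map $A\to A/I$ induces a natural isomorphism $\Cu(A)/\Cu(I)\cong \Cu(A/I)$ of \CuSgp{s}. Both facts are standard and already present in \cite{CowEllIva08CuInv} and \cite{AntPerThi18TensorProdCu}. With these identifications in place, \autoref{prp:IdealQuot} applied to $\Cu(I)\subseteq \Cu(A)$ yields both inequalities at once.

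For the direct sum, the isomorphism $(A\oplus B)\otimes\KK\cong (A\otimes\KK)\oplus(B\otimes\KK)$, combined with the observation that Cuntz subequivalence in such a direct sum decomposes coordinatewise, gives a natural $\Cu$-isomorphism $\Cu(A\oplus B)\cong \Cu(A)\oplus \Cu(B)$, from which \autoref{prp:sum} delivers the equality. For the inductive limit, continuity of the $\Cu$-functor (the main result of \cite{CowEllIva08CuInv}) yields $\Cu(\varinjlim_\lambda A_\lambda)\cong\varinjlim_\lambda \Cu(A_\lambda)$ in the category \CatCu{}, so the bound follows at once from \autoref{prp:limit}.

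The proof is therefore essentially a bookkeeping exercise: all three parts reduce to the abstract permanence results already proved in this section. The only point requiring care -- and the place where one could expect an obstacle -- is verifying that each of the relevant $\Cu$-level identifications is a genuine isomorphism of \CuSgp{s}, rather than merely of positively ordered monoids, so that the abstract statements can be applied verbatim; but this is by now well-documented in the references cited above.
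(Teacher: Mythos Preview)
Your proposal is correct and follows essentially the same approach as the paper: reduce each assertion to the corresponding abstract permanence result (\autoref{prp:IdealQuot}, \autoref{prp:sum}, \autoref{prp:limit}) by invoking the standard identifications $\Cu(I)\cong$ an ideal of $\Cu(A)$, $\Cu(A/I)\cong\Cu(A)/\Cu(I)$, $\Cu(A\oplus B)\cong\Cu(A)\oplus\Cu(B)$, and continuity of the $\Cu$-functor. The paper's proof is identical in structure, only citing \cite[Section~5.1]{AntPerThi18TensorProdCu} and \cite[Corollary~3.2.9]{AntPerThi18TensorProdCu} for these facts rather than \cite{CowEllIva08CuInv}.
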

\begin{proof}
The first statement follows from \autoref{prp:IdealQuot} using that $\Cu(I)$ is naturally isomorphic to an ideal of $\Cu(A)$, and that $\Cu(A/I)$ is naturally isomorphic to $\Cu(A)/\Cu(I)$;
see \cite[Section~5.1]{AntPerThi18TensorProdCu}.
The second statement follows from \autoref{prp:sum} using that $\Cu(A\oplus B)$ is isomorphic to $\Cu(A)\oplus\Cu(B)$.
Finally, the third statements follows from \autoref{prp:limit} and the fact that the Cuntz semigroup of an inductive limit of \ca{s} is naturally isomorphic to the inductive limit of the \ca{s};
see \cite[Corollary~3.2.9]{AntPerThi18TensorProdCu}.
\end{proof}

\begin{exa}
Recall that $\Cu(\CC)$ is naturally isomorphic to $\NNbar:=\{0,1,2,\ldots,\infty\}$.
We say that a \CuSgp{} $S$ is \emph{simplicial} if $S\cong\NNbar^k=\NNbar\oplus\ldots_k\oplus\NNbar$ for some $k\geq 1$.
If $A$ is a finite-dimensional \ca{}, then $\Cu(A)$ is simplicial.

It is easy to verify that $\dim(\NNbar)=0$.
By \autoref{prp:sum}, we get $\dim(\NNbar^k)=0$ for every $k\geq 1$.
(Using that $\NNbar^k=\Lsc(\{x_1,\ldots,x_k\},\NNbar)$ and that $\{x_1,\ldots,x_k\}$ is zero-dimensional, this also follows from \autoref{cor:Lsc}.)
Thus, if $S$ is an inductive limit of simplicial \CuSgp{s}, then $\dim(S)=0$ by \autoref{prp:limit}.
Further, it follows from \autoref{prp:PermanenceDimCu} that $\dim(\Cu(A))=0$ for every AF-algebra~$A$.
In \autoref{prp:CharDim0UnitalSR1}, we will generalize this to \ca{s} of real rank zero (which include all AF-algebras).

By applying the \CuSgp{} version of the Effros-Handelman-Shen theorem, \cite[Corollary~5.5.13]{AntPerThi18TensorProdCu}, it also follows that every countably-based, weakly cancellative, unperforated, algebraic \CuSgp{} satisfying \axiomO{5} and \axiomO{6} is zero-dimensional.
In \autoref{prp:charAlgDim0}, we will generalize this to weakly cancellative, algebraic \CuSgp{s} satisfying \axiomO{5} and \axiomO{6}.
\end{exa}

\begin{exa}
\label{exa:elementary}
Recall that a \CuSgp{} is said to be \emph{elementary} if it is isomorphic to $\{0\}$, or if it is simple and contains a minimal nonzero element;
see \cite[Paragraph~5.1.16]{AntPerThi18TensorProdCu}.
Typical examples of elementary \CuSgp{s} are $\NNbar$ and $E_k=\{0,1,2,\ldots,k,\infty\}$ for $k\in\NN$, where the sum of two elements in $E_k$ is defined as $\infty$ if their usual sum would exceed $k$;
see \cite[Paragraph~5.1.16]{AntPerThi18TensorProdCu}.
By \cite[Proposition~5.1.19]{AntPerThi18TensorProdCu}, these are the only elementary \CuSgp{s} that satisfy \axiomO{5} and \axiomO{6}.

It is easy to see that every elementary \CuSgp{} satisfying \axiomO{5} and \axiomO{6} is zero-dimensional.
In \autoref{exa:ihomElementary} below, we show that this is no longer the case without \axiomO{5}.
To see that \axiomO{6} is also necessary, consider $S:=\NNbar\cup\{1'\}$, with $1'$ a compact element not comparable with $1$ and such that $1'+1'=2$ and $1+k=1'+k$ for every $k\in\NNbar\setminus\{0\}$.
We claim that $\dim(S)=\infty$.

Assume, for the sake of contradiction, that $\dim (S)\leq n$ for some $n\in\NN$. 
Then, since $1'\ll 1'\ll 2=1+1$, there exist elements $z_{1,k}, z_{2,k}\in S$ for $k=0,\ldots,n$ satisfying conditions (i)-(iii) from \autoref{dfn:dim}.
By condition~(i), we have $z_{j,k}\ll 1$ and therefore $z_{j,k}=0$ or $z_{j,k}=1$ for every $j,k$.
By condition~(ii), we have $1'\ll\sum_{j,k}z_{j,k}$, and so there exist $j'\in\{1,2\}$ and $k'\in\{0,\ldots,n\}$ such that $z_{j',k'}=1$.
However, by condition~(iii), we have $z_{j',k'}\ll 1'$, which is a contradiction because the elements $1$ and $1'$ are not comparable.
\end{exa}

\begin{exa}
\label{exa:ihomElementary}
Let $k,l\in\NN$, and let $E_k$ and $E_l$ be the elementary \CuSgp{s} as in \autoref{exa:elementary}.
Then the abstract bivariant \CuSgp{} $\ihom{E_{k},E_{l}}$, as defined in \cite{AntPerThi20AbsBivariantCu}, has dimension one whenever $l>k$ and dimension zero otherwise.

Indeed, by \cite[Proposition~5.18]{AntPerThi20AbsBivariantCu}, we know that $\ihom{E_{k},E_{l}}=\{0,r,\ldots,l,\infty\}$ with $r=\lceil (l+1)/(k+1)\rceil$.
Thus, if $l\leq k$, then $\ihom{E_{k},E_{l}}=E_l$, which is zero-dimensional by \autoref{exa:elementary}.
Note that $\ihom{E_{k},E_{l}}$ is an elementary \CuSgp{} satisfying \axiomO{6}.
Further, $\ihom{E_{k},E_{l}}$ satisfies \axiomO{5} if and only if $l\leq k$.

Let us now assume that $l>k$, that is $r>1$.
Then, even though $r+1\ll r+1\ll r+r$, one cannot find $z_{1},z_{2}\ll r$ such that $r+1=z_{1}+z_{2}$.
This shows that $\dim(\ihom{E_{k},E_{l}})\neq 0$.
 
To verify $\dim(\ihom{E_{k},E_{l}})\leq 1$, let $x\ll x\ll y_{1}+\ldots+y_{r}$ in $\ihom{E_{k},E_{l}}$.
We may assume that $y_{j}$ is nonzero for every $j$. 
If there exists $i\in\{1,\ldots,r\}$ with $x\leq y_{i}$, then $z_{i,0}:=x$ and $z_{j,k}:=0$ for $j\neq i$ or $k=1$ have the desired properties.

So we may assume that $y_j<x$ for every $j$.
Let $k$ be the least integer such that $x\leq y_{1}+\ldots+y_{k}$.
Define $z_{j,0}:=y_{j}$ for every $j<k$ and $z_{j,0}:=0$ for $j\geq k$. 
Further, define $z_{k,1}:=y_{k}$ and $z_{j,1}:=0$ for $j \neq k$. 
By choice of $k$, we have $\sum_{j}z_{j,0}\ll x$.
We also have $\sum_{j}z_{j,1}=y_k\ll x$.
Finally, $x\ll \sum_{j}z_{j,0} + \sum_{j}z_{j,1}$, as desired.
\end{exa}

\begin{dfn}
\label{dfn:retract}
Let $S$ and $T$ be \CuSgp{s}.
We say that $S$ is a \emph{retract} of $T$ if there exist a \CuMor{} $\iota\colon S\to T$ and a generalized \CuMor{} $\sigma\colon T\to S$ such that $\sigma\circ\iota=\id_S$.
\end{dfn}

Many properties of \CuSgp{s} pass to retracts.
In \autoref{prp:RetractInterpol} we show this for the Riesz interpolation property and for almost divisibility.
The next result shows that the dimension does not increase when passing to a retract.

\begin{prp}
\label{prp:DimRetract}
Let $S$ and $T$ be \CuSgp{s} and assume that $S$ is a retract of $T$.
Then $dim(S)\leq\dim(T)$.
\end{prp}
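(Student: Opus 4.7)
The plan is to use \autoref{prp:DimWeak} as an auxiliary step, since the retraction map $\sigma$ is only a generalized \CuMor{} and so does not necessarily preserve the way-below relation.

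Let $\iota\colon S\to T$ and $\sigma\colon T\to S$ witness $S$ as a retract of $T$, so $\sigma\circ\iota=\id_S$. Set $n:=\dim(T)$, which we may assume finite. To verify $\dim(S)\leq n$, start with $x'\ll x\ll y_1+\ldots+y_r$ in $S$. Since $\iota$ is a \CuMor{} and hence preserves $\ll$, we obtain
\[
\iota(x')\ll\iota(x)\ll\iota(y_1)+\ldots+\iota(y_r)
\]
in $T$. Apply $\dim(T)\leq n$ to get elements $w_{j,k}\in T$, for $j=1,\ldots,r$ and $k=0,\ldots,n$, satisfying $w_{j,k}\ll\iota(y_j)$, $\iota(x')\ll\sum_{j,k} w_{j,k}$, and $\sum_{j}w_{j,k}\ll\iota(x)$ for each $k$.

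Now set $z_{j,k}:=\sigma(w_{j,k})\in S$. Since $\sigma$ is a generalized \CuMor{}, it is additive and order-preserving, and satisfies $\sigma\circ\iota=\id_S$. Applying $\sigma$ to the three relations above yields $z_{j,k}\leq y_j$, $x'\leq\sum_{j,k} z_{j,k}$, and $\sum_{j}z_{j,k}\leq x$ for each $k$. By \autoref{prp:DimWeak}, these inequalities are enough to conclude $\dim(S)\leq n$, as desired.

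The only subtle point is that $\sigma$ need not preserve $\ll$, so directly mimicking \autoref{dfn:dim} is unavailable; however, this is precisely the situation \autoref{prp:DimWeak} is designed for, so no real obstacle arises. Everything else reduces to routine functorial bookkeeping.
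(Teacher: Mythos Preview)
Your proof is correct and follows essentially the same approach as the paper's own proof: push the given $\ll$-relations into $T$ via $\iota$, apply $\dim(T)\leq n$ there, and then pull back via $\sigma$ to obtain only $\leq$-relations, invoking \autoref{prp:DimWeak} to finish. The paper's proof is slightly more terse but identical in substance.
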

\begin{proof}
Let $\iota\colon S\to T$ be a \CuMor, and let $\sigma\colon T\to S$ be a generalized \CuMor{} such that $\sigma\circ\iota=\id_S$.
Set $n:=\dim(T)$, which we may assume to be finite, and let $x' \ll x \ll y_1+\ldots+y_r$ in $S$. 
Then
\[
\iota(x') \ll \iota(x) \ll \iota(y_1) + \ldots + \iota(y_r)
\]
in $T$.
Using that $\dim(T)\leq n$, we obtain elements $z_{j,k}$ in $T$ satisfying conditions (i)-(iii) of \autoref{dfn:dim}.
Applying $\sigma$, we see that the elements $\sigma (z_{j,k})$ satisfy conditions (1)-(3) in \autoref{prp:DimWeak}, from which the result follows.
\end{proof}

Given a simple \CuSgp{} $S$, let us now show that its sub-\CuSgp{} of soft elements $S_\soft$, as defined in \autoref{pgr:soft}, is a retract of $S$. 
As we will see in \autoref{prp:DimSoftPart} below, such elements play an important role in the study of the dimension of $S$.

\begin{prp}
\label{prp:retractSimpleSoft}
Let $S$ be a countably based, simple, weakly cancellative \CuSgp{} satisfying \axiomO{5} and \axiomO{6}.
Then $S_\soft$ is a retract of $S$.
\end{prp}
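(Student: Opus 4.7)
The plan is to construct the retraction by taking $\iota\colon S_\soft\hookrightarrow S$ to be the natural inclusion and $\sigma\colon S\to S_\soft$ to be a \emph{softification} map $x\mapsto x_\soft$ satisfying $s_\soft=s$ for every $s\in S_\soft$. The identity $\sigma\circ\iota=\id_{S_\soft}$ then holds by construction.

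First I would verify that $\iota$ is a \CuMor. This amounts to checking that $S_\soft$ is a sub-\CuSgp{} of $S$ (closed under sums, sequential suprema, and inherited order) and that the way-below relation on $S_\soft$ coincides with the restriction of the way-below relation on $S$. These facts should be available from \autoref{pgr:soft} and references therein, and use nothing beyond the general framework for soft elements in a simple \CuSgp{} satisfying \axiomO{5}--\axiomO{6}.

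Next, I would construct $\sigma$ by assigning to each $x\in S$ a canonical soft element $x_\soft\leq x$. A natural candidate is
\[
x_\soft:=\sup\{s\in S_\soft : s\ll x\}.
\]
The existence of a $\ll$-increasing, cofinal sequence of soft elements in $\{s\in S_\soft : s\ll x\}$ is what makes $x_\soft$ an element of $S_\soft$; this is where simplicity, countable basedness, and \axiomO{5} are used, since simplicity produces enough soft elements and \axiomO{5} allows one to split a soft summand off any given $x'\ll x$. Order-preservation of $\sigma$ is immediate from the definition, and $\sigma(s)=s$ for soft $s$ follows from \axiomO{2}.

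The heart of the argument is showing that $\sigma$ is additive and preserves suprema of increasing sequences. For additivity, the inequality $x_\soft+y_\soft\leq(x+y)_\soft$ is clear. For the reverse, given a soft $s\ll x+y$, \axiomO{6} yields $v\leq x$ and $w\leq y$ with $s\leq v+w$; one then softens $v$ and $w$ using \axiomO{5} and simplicity (possibly after further $\ll$-approximation and invoking weak cancellation to glue the pieces back together) so that $s\leq v_\soft+w_\soft\leq x_\soft+y_\soft$. Sup-preservation then follows by combining this additivity with \axiomO{4} and the fact that every $s\in S_\soft$ with $s\ll\sup_n x_n$ satisfies $s\ll x_n$ for some $n$. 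The main obstacle I expect is the additivity step: controlling the softening of the pieces produced by \axiomO{6} without destroying the relation $s\leq v+w$ requires careful interplay between weak cancellation, \axiomO{5}, and the availability of soft elements from simplicity.
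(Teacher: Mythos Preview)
Your approach is essentially the one the paper takes: let $\iota$ be the inclusion and let $\sigma$ be the softification map sending $x$ to the largest soft element below it, then check $\sigma\circ\iota=\id_{S_\soft}$. The paper simply formulates $\sigma(x):=\max\{x'\in S_\soft : x'\leq x\}$ and cites \cite[Proposition~5.3.18]{AntPerThi18TensorProdCu} for $S_\soft$ being a \CuSgp{} and \cite[Proposition~2.9]{Thi20RksOps} for the existence of this maximum and for $\sigma$ being a generalized \CuMor{}, rather than verifying additivity and sup-preservation by hand as you sketch; your definition via $\sup\{s\in S_\soft : s\ll x\}$ gives the same element.

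One remark on your additivity sketch: the route through \axiomO{6} can be made to work, but a cleaner argument (and the one underlying the cited reference) exploits the compact/soft dichotomy from \cite[Proposition~5.3.16]{AntPerThi18TensorProdCu} together with the key identity $\sigma(x)+t=x+t$ for compact $x$ and nonzero $t$, which reduces additivity to a short case analysis rather than a decomposition-and-resoftening argument.
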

\begin{proof}
By \cite[Proposition~5.3.18]{AntPerThi18TensorProdCu}, $S_\soft$ is a \CuSgp{}.
By \cite[Proposition~2.9]{Thi20RksOps}, for each $x\in S$ there exists a (unique) maximal soft element dominated by $x$ and the map $\sigma\colon S\to S_\soft$ given by
\[
\sigma(x) := \max \big\{ x'\in S_\soft : x' \leq x \big\}, \quad\text{ for $x\in S$,}
\]
is a generalized \CuMor.
Further, the inclusion $\iota\colon S_\soft\to S$ is a \CuMor{} and the composition $\sigma\circ\iota$ is the identity on $S_\soft$, as desired.
\end{proof}

\begin{prp}
\label{prp:DimSoftPart}
Let $S$ be a countably based, simple, weakly cancellative \CuSgp{} satisfying \axiomO{5} and \axiomO{6}.
Then
\[
\dim(S_\soft)\leq\dim(S)\leq\dim(S_\soft)+1.
\]
\end{prp}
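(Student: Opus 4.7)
The first inequality $\dim(S_\soft)\leq\dim(S)$ is immediate from \autoref{prp:DimRetract} and \autoref{prp:retractSimpleSoft}, since together these show that $S_\soft$ is a retract of $S$ in \CatCu.

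For the reverse inequality, the plan is to show $\dim(S)\leq\dim(S_\soft)+1$. Set $n:=\dim(S_\soft)$, which may be assumed finite, and take a covering relation $x'\ll x\ll y_1+\ldots+y_r$ in $S$. The guiding idea is that, in the simple weakly cancellative setting with \axiomO{5} and \axiomO{6}, each $y_j$ admits (up to an interior approximation) a splitting of the form $y_j'\leq c_j+s_j\leq y_j$ with $c_j$ compact and $s_j$ soft, so that the $n+1$ colors furnished by $\dim(S_\soft)\leq n$ handle the soft part of the decomposition, while a single extra color (indexed $n+1$) absorbs the compact contribution.

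Concretely, I would first interpolate $x'\ll x_2\ll x_1\ll x$ and pick $y_j'\ll y_j$ with $x\ll y_1'+\ldots+y_r'$. Exploiting the structure theory of simple, weakly cancellative \CuSgp{s} satisfying \axiomO{5} and \axiomO{6} (compare \cite{Thi20RksOps}), one produces for each $j$ a compact $c_j\leq y_j$ and a soft $s_j\ll y_j$ with $y_j'\leq c_j+s_j$. Writing $c:=c_1+\ldots+c_r$ and $s:=s_1+\ldots+s_r$, we have $x_1\ll c+s$. An iterated application of \axiomO{6}, together with weak cancellation, then yields a decomposition $x_2\leq u_1+\ldots+u_r+w$ with $u_j\leq c_j$, $\sum_j u_j\leq x$, and $w\leq s$. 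Setting $z_{j,n+1}:=u_j$ produces the compact-color entries and, by \autoref{prp:DimWeak}, suffices to witness conditions (i) and (iii) of \autoref{dfn:dim} at color $n+1$.

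For the remaining colors, the aim is to apply $\dim(S_\soft)\leq n$ to a way-below relation in $S_\soft$ of the form $w''\ll w'\ll s_1+\ldots+s_r$, obtained by interpolating $w$ inside $S_\soft$ using the retraction $\sigma\colon S\to S_\soft$ from \autoref{prp:retractSimpleSoft} (which is additive, since $\sigma$ is a generalized \CuMor). This produces $z_{j,k}'\in S_\soft$ for $j=1,\ldots,r$ and $k=0,\ldots,n$ satisfying $z_{j,k}'\ll s_j$ in $S_\soft$, $\sum_j z_{j,k}'\ll w'$, and $w''\ll\sum_{j,k}z_{j,k}'$. Letting $z_{j,k}:=\iota(z_{j,k}')$ and using that $\iota$ is a \CuMor{}, one verifies, again with the help of \autoref{prp:DimWeak}, that the full family $\{z_{j,k}\}_{k=0}^{n+1}$ witnesses $\dim(S)\leq n+1$. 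The principal obstacle is the compact--soft splitting of the $y_j$'s together with ensuring that the element $w$, which a priori is only dominated by a soft element, can be replaced (via $\sigma$ and a further interpolation) by a genuinely soft element to which $\dim(S_\soft)\leq n$ applies; bridging the way-below relations of $S$ and $S_\soft$ in this step is the delicate part of the argument.
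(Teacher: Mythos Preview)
Your first inequality is correct and matches the paper. For the second inequality, however, there is a genuine gap.

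The failure occurs when $x$ is compact but every $y_j$ is soft; this already happens in $Z=\Cu(\mathcal{Z})$ with $x'=x=1$ and $y_1=y_2=\tfrac{3}{4}$ (cf.\ \autoref{exa:Z}). In that situation the only compact element $c_j\leq y_j$ is $c_j=0$, so your extra color $n{+}1$ contributes nothing (all $u_j=0$). You must then cover the compact element $x'$ entirely with the soft-color entries $z_{j,k}=\iota(z_{j,k}')$. But these come from applying $\dim(S_\soft)\leq n$ to a relation $w''\ll w'$ in $S_\soft$, where $w',w''$ are obtained from $w$ via the retraction $\sigma$; the only lower bound this yields is $w''\ll\sum_{j,k}z_{j,k}'$, and $w''\leq\sigma(w)\leq\sigma(x)\lneq x$. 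Since $x'=x$ is compact while $\sum_{j,k}z_{j,k}$ is soft, the required inequality $x'\leq\sum_{j,k}z_{j,k}$ cannot be deduced. In the $Z$ example this fails outright: for $n=0$ one has $\sum_{j,k}z_{j,k}=\sum_j z_{j,0}'\ll w'\leq 1'$, and no soft element below $1'$ dominates the compact element~$1$.

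The paper takes a different route: it case-splits on $x$ rather than on the $y_j$'s. If $x$ is soft, one interpolates $x'\ll s'\ll s\ll x$ by soft elements and applies $\dim(S_\soft)\leq n$ directly to $s'\ll s\leq\sigma(y_1)+\ldots+\sigma(y_r)$; no extra color is needed. If $x$ is compact, one invokes \autoref{prp:smallElements} to obtain a nonzero $w\leq x,y_1$ and uses the key relation $x\ll\sigma(x)+w$; then $w$ alone populates color $n{+}1$ (as $z_{1,n+1}$), while the soft colors only need to cover an element way below $\sigma(x)$, which is genuinely accessible inside $S_\soft$. A secondary issue in your sketch: the claim that iterated \axiomO{6} with weak cancellation yields $\sum_j u_j\leq x$, rather than merely $u_j\leq x$ for each $j$ individually, is also not justified.
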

\begin{proof}
The first inequality follows from Propositions  \ref{prp:DimRetract} and \ref{prp:retractSimpleSoft}.
To show the second inequality, set $n:=\dim(S_\soft)$, which we may assume to be finite.
If $S$ is elementary, then $\dim(S)=0$ as noted in \autoref{exa:elementary}.
Thus, we may assume that $S$ is nonelementary.
By \cite[Proposition~5.3.16]{AntPerThi18TensorProdCu}, every nonzero element of $S$ is either soft or compact.
To verify $\dim(S)\leq n+1$, let $x'\ll x\ll y_1+\ldots+y_r$ in $S$.
We may assume that $x$ and $y_1$ are nonzero.
If $x$ is soft, then we let $s',s\in S$ be any pair of soft elements satisfying $x'\ll s'\ll s\ll x$.
If $x$ is compact, then we apply \autoref{prp:smallElements} to obtain a nonzero element $w\in S$ satisfying $w\leq x,y_1$.
Then $x\ll\sigma(x)+w$, which allows us to choose soft elements $s'\ll s$ such that $ s\ll\sigma(x)$ and $x\ll s'+w$.
In both cases, we have
\[
s'\ll s \ll \sigma(x) \leq \sigma(y_1)+\ldots+\sigma(y_r)
\]
in $S_\soft$.
Using that $\dim(S_\soft)\leq n$, we obtain (soft) elements $z_{j,k}\in S$ for $j=1,\ldots,r$ and $k=0,\ldots,n$ such that
\begin{enumerate}[(i)]
\item 
$z_{j,k}\ll \sigma(y_j)$ (and thus, $z_{j,k}\ll y_j$) for each $j$ and $k$;
\item 
$s'\ll \sum_{j,k} z_{j,k}$;
\item 
$\sum_{j=1}^r z_{j,k}\ll s\ll \sigma(x)$ (and thus, $\sum_{j=1}^r z_{j,k}\ll x$) for each $k=0,\ldots,n$.
\end{enumerate}
If $x$ is soft, then $x'\ll s' \ll \sum_{j,k} z_{j,k}$, which shows that the elements $z_{j,k}$ have the desired properties.
If $x$ is compact, then set $z_{1,n+1}:=w$ and $z_{j,n+1}:=0$ for $j=2,\ldots,r$.
Then $z_{j,n+1}\ll y_j$ for each $j$.
Further,
\[
x'\ll x\ll s'+w \leq \left( \sum_{k=0}^n\sum_{j=1}^r z_{j,k} \right) + \sum_{j=1}^r z_{j,n+1}
= \sum_{k=0}^{n+1}\sum_{j=1}^r z_{j,k}.
\]
Lastly, $\sum_{j=1}^r z_{j,n+1}=w\ll x$, which shows that the elements $z_{j,k}$ have the desired properties.
\end{proof}

\begin{rmk}
\autoref{prp:DimSoftPart} applies in particular to the Cuntz semigroups of separable, simple \ca{s} of stable rank one (see \cite[Proposition~5.1.1]{Rob13Cone}).
More generally, Engbers showed in \cite{Eng14PhD} that for every separable, simple, stably finite \ca{} $A$, every compact element in $\Cu(A)$ has a predecessor.
The proof of \autoref{prp:DimSoftPart} can be generalized to this situation and we obtain 
\[
\dim(\Cu(A)_\soft)\leq\dim(\Cu(A))\leq\dim(\Cu(A)_\soft)+1.
\]
\end{rmk}

\begin{exa}
\label{exa:Z}
Let $Z=\Cu(\mathcal{Z})$, the Cuntz semigroup of the Jiang-Su algebra~$\mathcal{Z}$.
Then, $\dim(Z)=1$.
Indeed, since $Z_\soft\cong[0,\infty]$, and it is easy to verify that $\dim([0,\infty])=0$, we have $\dim(Z)\leq\dim([0,\infty])+1=1$ by \autoref{prp:DimSoftPart}. 

On the other hand, to see that $\dim (Z)\neq 0$, consider the compact element $1\in Z$ and the soft element $\tfrac{3}{4}\in Z$.
Then $1\ll 1\ll \tfrac{3}{4}+\tfrac{3}{4}$, but there are no elements $z_0,z_1\in Z$ satisfying $1=z_0+z_1$ and $z_0,z_1\ll\tfrac{3}{4}$, which shows that $\dim(Z)\neq 0$.
(It also follows from \autoref{prp:DichotomySimpleDim0} that $Z$ is not zero-dimensional.)

A similar argument shows that $\dim(Z')=1$, where $Z'$ is the \CuSgp{} considered in \cite[Question~9(8)]{AntPerThi18TensorProdCu}, that is, $Z':=Z\cup\{ 1'' \}$ with $1''$ a compact element not comparable with $1$ and such that $1''+1''=2$ and $1+x=1''+x$ for every $x\in Z\setminus\{ 0\}$.
\end{exa}

The notion of $R$-multiplication on a \CuSgp{} for a $\CatCu$-semiring $R$ was introduced in \cite[Definition~7.1.3]{AntPerThi18TensorProdCu}. 
Informally, an $R$-multiplication is a scalar multiplication on the semigroup with natural compatibility conditions. 
Given a solid $\CatCu$-semiring $R$ (such as $\{ 0,\infty\}$, $[0,\infty]$ or~$Z$), any two $R$-multiplications on a \CuSgp{} are equal, and therefore having an $R$-multiplication is a property;
see \cite[Remark~7.1.9]{AntPerThi18TensorProdCu}.

It was shown in \cite[Theorem~7.2.2]{AntPerThi18TensorProdCu} that a \CuSgp{} has $\{ 0,\infty \}$-multiplication if and only if every element in the semigroup is idempotent.
By \cite[Theorem~7.3.8]{AntPerThi18TensorProdCu}, a \CuSgp{} has $Z$-multiplication if and only if it is almost unperforated and almost divisible.
By \cite[Theorem~7.5.4]{AntPerThi18TensorProdCu}, a \CuSgp{} has $[0,\infty]$-multiplication if and only if it has $Z$-multiplication and every element in $S$ is soft.

\begin{prp}
\label{prp:RZ_mult}
Let $S$ be a \CuSgp{} satisfying \axiomO{5} and \axiomO{6}.
Then:
\begin{enumerate}
\item
If $S$ has $\{ 0,\infty \}$-multiplication, then $\dim(S)=0$.
\item
If $S$ has $[0,\infty]$-multiplication, then $\dim(S)=0$.
\item 
If $S$ has $Z$-multiplication, then $\dim(S)\leq 1$.
\end{enumerate}
\end{prp}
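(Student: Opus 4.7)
The plan is to handle the three parts in the order (1), (3), (2), leveraging the scalar structure supplied by each multiplication. For (1), recall from \cite[Theorem~7.2.2]{AntPerThi18TensorProdCu} that having $\{0,\infty\}$-multiplication is equivalent to every element being idempotent, i.e.\ $x+x=x$. Given $x' \ll x \ll y_1+\ldots+y_r$, an inductive application of \axiomO{6} on $r$ (combined with the standard interpolation to pass between $\leq$ and $\ll$) yields elements $v_j \in S$ with $v_j \leq x$, $v_j \leq y_j$, and $x' \leq v_1+\ldots+v_r$. Idempotence then gives $v_1+\ldots+v_r \leq x+\ldots+x=x$, so by the weak characterization of \autoref{prp:DimWeak}, the family $z_{j,0}:=v_j$ witnesses $\dim(S)\leq 0$.

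For (3), use that $Z$-multiplication amounts to almost unperforation together with almost divisibility (\cite[Theorem~7.3.8]{AntPerThi18TensorProdCu}). Given $x' \ll x \ll y_1+\ldots+y_r$, first interpolate $x' \ll x_1 \ll x_2 \ll x$ and pick $y_j' \ll y_j$ with $x_2 \leq y_1'+\ldots+y_r'$. Applying almost divisibility with $n=1$ to each pair $y_j' \ll y_j$ produces $\tilde y_j$ with $\tilde y_j \leq y_j$ and $y_j' \leq 2\tilde y_j$, so that
\[
x_2 \leq 2\sum_{j=1}^r \tilde y_j \;=\; \sum_{j=1}^r \tilde y_j + \sum_{j=1}^r \tilde y_j.
\]
Axiom \axiomO{6} then supplies elements $A, B \leq x$ with $A, B \leq \sum_j \tilde y_j$ and $x_1 \leq A+B$; a further interpolation yields $A' \ll A$ and $B' \ll B$ with $x' \leq A'+B'$. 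Iterated \axiomO{6} applied to $A' \ll A \leq \sum_j \tilde y_j$ produces $A_j \leq A$ with $A_j \leq \tilde y_j$ and $A' \leq \sum_j A_j$, and analogously for $B$. Setting $z_{j,0}:=A_j$ and $z_{j,1}:=B_j$ gives the candidate 2-coloured cover. For (2), the extra softness available in the $[0,\infty]$-case should allow the two colours produced by the argument of (3) to be collapsed into one, using that $\tfrac{1}{2}y+\tfrac{1}{2}y = 1\cdot y = y$ whenever $y$ is soft.

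The main obstacle in both (2) and (3) is verifying condition~(iii) of \autoref{dfn:dim}, namely that each colour-sum $\sum_j A_j$ (and similarly $\sum_j B_j$) is bounded by $x$. Iterated \axiomO{6} only yields the pointwise bounds $A_j \leq A \leq x$, while the sum $\sum_j A_j$ may \emph{a priori} exceed $x$. Overcoming this is the heart of the proof, and it is where the second half of $Z$-multiplication — almost unperforation — must be invoked in tandem with a uniform scalar shrinking of the $A_j$: one replaces each $A_j$ by a slightly smaller $A_j''$ obtained through the $Z$-action, chosen so as to preserve the cover $A' \leq \sum_j A_j''$ while forcing $\sum_j A_j'' \leq x$. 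I expect this technical manipulation to dominate the length of the argument.
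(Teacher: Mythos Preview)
Your argument for~(1) is correct and essentially identical to the paper's.

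For~(2) and~(3), however, there is a genuine gap, and your order of attack is the reverse of what actually works. You correctly isolate the obstacle in~(3): iterated \axiomO{6} gives only pointwise bounds $A_j\leq A\leq x$, not the colour-sum bound $\sum_j A_j\leq x$. But your proposed fix does not close. Scaling each $A_j$ by a soft scalar $\lambda$ yields $\sum_j \lambda A_j\leq \lambda\cdot rA\leq x$ only for $\lambda$ roughly $1/r$, and this destroys the cover condition $A'\leq\sum_j \lambda A_j$; almost unperforation does not recover it, since you have no control on $\sum_j A_j$ beyond the trivial bound $rA$. Similarly, your plan to derive~(2) from~(3) by ``collapsing colours'' via $\tfrac12 y+\tfrac12 y=y$ fails: setting $w_j:=\tfrac12(z_{j,0}+z_{j,1})$ does give $w_j\leq y_j$, but you then need $x'\leq\sum_j w_j=\tfrac12\sum_{j,k}z_{j,k}$, and you only have $x'\leq\sum_{j,k}z_{j,k}$.

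The paper proceeds in the opposite order. For~(2) it does \emph{not} argue internally: it invokes the identification of $S$ with its realification $S_R$ and then applies Robert's decomposition property for $S_R$ \cite[Theorem~4.1.1]{Rob13Cone}, which directly furnishes the refinement with the sum bound. For~(3) it then uses~(2) rather than \axiomO{6}: one scales into the soft part via the $Z$-action, writing $\tfrac58 x'\ll\tfrac68 x\ll\tfrac78 y_1+\ldots+\tfrac78 y_r$ in $S_\soft$, applies $\dim(S_\soft)=0$ from~(2) to obtain $z_j\ll\tfrac78 y_j$ with $\tfrac58 x'\ll\sum_j z_j\ll\tfrac68 x$, and sets $z_{j,0}=z_{j,1}:=z_j$. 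The point is that the zero-dimensionality of $S_\soft$ supplies the sum bound $\sum_j z_j\ll\tfrac68 x\leq x$ for free, precisely the condition your iterated \axiomO{6} cannot produce.
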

\begin{proof}
(1)
Given elements $x'\ll x\ll y_1+\ldots +y_r$ in a \CuSgp{} with $\{0,\infty \}$-multiplication, apply \axiomO{6} to obtain elements $z_j\leq x,y_j$ such that
\[
 x'\leq z_1+\ldots +z_r.
\]

Using that every element in $S$ is idempotent, one also has
\[
 z_1+\ldots +z_r\leq x+\ldots_r +x=rx=x.
\]

This shows that the elements $z_j$ satisfy the conditions in Lemma \ref{prp:DimWeak}, as required.

(2)
Note that $S$ is isomorphic to its realification $S_R$ by Theorem~7.5.4 and Proposition~7.5.9 in \cite{AntPerThi18TensorProdCu}. 
We can now use the decomposition property of $S_{R}$ proven in \cite[Theorem~4.1.1]{Rob13Cone} to deduce that $S$ is zero-dimensional.

(3)
Assume that $S$ has $Z$-multiplication.
By \cite[Proposition~7.3.13]{AntPerThi18TensorProdCu}, an element $x\in S$ is soft if and only if $x=1'x$ (where $1'$ denotes the soft one in $Z$).
Further, the \CuSgp{} $S_\soft:=1' S$ of soft elements in $S$ is isomorphic to the realification of $S$;
see \cite[Corollary~7.5.10]{AntPerThi18TensorProdCu}.
Since the realification of $S$ has $[0,\infty]$-multiplication, we get $\dim(S_\soft)=0$ by (2).

To verify $\dim(S)\leq 1$, let $
x'\ll x\ll y_{1}+\ldots+y_{r}$ 
in $S$.
Using that $S$ has $Z$-multiplication, one gets
\[
\tfrac{5}{8} x' 
\ll \tfrac{6}{8} x 
\ll \tfrac{7}{8} y_1 + \ldots + \tfrac{7}{8} y_r.
\]
Note that all elements in the previous expression belong to $S_\soft$.
Since $\dim(S_\soft)=0$, we obtain (soft) elements $z_1,\ldots,z_r\in S$ such that $z_j\ll \tfrac{7}{8} y_{j}$ for each $j$, and such that 
\[                                                               
\tfrac{5}{8} x' 
\ll z_{1}+\ldots+z_{r}
\ll \tfrac{6}{8} x.
\]

Define $z_{j,0}:=z_{j}$ and $z_{j,1}:=z_{j}$ for $j=1,\ldots,r$.
We trivially have $z_{j,k}\ll y_{j}$ for each $j$ and $k$.
Further,
\[
x' 
\leq \tfrac{10}{8}x'
\ll 2(z_{1}+\cdots +z_{r})
= \sum_{j,k}z_{j,k},
\]
and
\[
\sum_{j}z_{j,k} \ll \tfrac{6}{8}x \leq x.
\]
for each $k=0,1$, as desired.
\end{proof}

Let $A$ be a \ca{}. 
Then, we know from \cite[Proposition~7.2.8]{AntPerThi18TensorProdCu} that $A$ is purely infinite if and only if $\Cu (A)$ has $\{ 0,\infty \}$-multiplication.

\begin{prp}
\label{prp:PurInfCAlg}
Let $A$ be a purely infinite \ca{}. 
Then $\dim (\Cu (A))=0$.
\end{prp}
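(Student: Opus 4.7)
The plan is to reduce this immediately to the abstract statement in \autoref{prp:RZ_mult}. The key ingredient cited just before the proposition is that a \ca{} $A$ is purely infinite if and only if $\Cu(A)$ has $\{0,\infty\}$-multiplication (\cite[Proposition~7.2.8]{AntPerThi18TensorProdCu}). Combined with the fact that the Cuntz semigroup of any \ca{} satisfies \axiomO{5} and \axiomO{6} (recalled in the preliminaries, following \cite[Proposition~4.6]{AntPerThi18TensorProdCu} and \cite{Rob13Cone}), the hypotheses of part~(1) of \autoref{prp:RZ_mult} are met.

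Thus the proof is essentially a single line: since $A$ is purely infinite, $\Cu(A)$ is a \CuSgp{} satisfying \axiomO{5} and \axiomO{6} that has $\{0,\infty\}$-multiplication, and therefore $\dim(\Cu(A))\leq 0$ by \autoref{prp:RZ_mult}(1), which gives $\dim(\Cu(A))=0$.

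There is no real obstacle here — the work has already been done in \autoref{prp:RZ_mult}(1), whose argument is itself short: given $x'\ll x\ll y_1+\ldots+y_r$, \axiomO{6} yields $z_j\leq x,y_j$ with $x'\leq z_1+\ldots+z_r$, and idempotency (which is equivalent to $\{0,\infty\}$-multiplication by \cite[Theorem~7.2.2]{AntPerThi18TensorProdCu}) forces $z_1+\ldots+z_r\leq rx=x$, so that the $z_j$ (with a single color) witness $\dim\leq 0$ via the reformulation in \autoref{prp:DimWeak}. So the only content of this proposition is the translation between the \ca{}-theoretic hypothesis (pure infiniteness) and the semigroup-theoretic hypothesis ($\{0,\infty\}$-multiplication).
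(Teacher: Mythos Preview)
Your proposal is correct and matches the paper's own approach exactly. In fact, the paper states \autoref{prp:PurInfCAlg} without proof, immediately after recalling from \cite[Proposition~7.2.8]{AntPerThi18TensorProdCu} that pure infiniteness is equivalent to $\Cu(A)$ having $\{0,\infty\}$-multiplication; the reader is meant to combine this with \autoref{prp:RZ_mult}(1), precisely as you do.
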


Let $\mathcal{W}$ denote the Jacelon-Racak algebra.
Given a \ca{} $A$, it follows from \cite[Proposition~7.6.3]{AntPerThi18TensorProdCu} that $\Cu(A\otimes\mathcal{W})$ has $[0,\infty]$-multiplication, and that $\Cu(A\otimes\mathcal{Z})$ has $Z$-multiplication.

\begin{prp}
\label{prp:ZstableCAlg}
Let $A$ be a \ca{}.
Then
\[
\dim(\Cu(A\otimes\mathcal{W}))=0, \andSep
\dim(\Cu(A\otimes\mathcal{Z}))\leq 1.
\]
In particular, Cuntz semigroups of $\mathcal{W}$-stable \ca{s} are zero-dimensional, and Cuntz semigroups of $\mathcal{Z}$-stable \ca{s} have dimension at most one.
\end{prp}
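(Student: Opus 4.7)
The plan is to derive this statement as a direct consequence of \autoref{prp:RZ_mult} together with the structural result about $R$-multiplications on $\Cu(A\otimes\mathcal{W})$ and $\Cu(A\otimes\mathcal{Z})$ that was recalled just before the statement.

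First, I would observe that for any \ca{} $B$, the Cuntz semigroup $\Cu(B)$ satisfies axioms \axiomO{5} and \axiomO{6} (this is already recorded in the preliminaries, citing \cite[Proposition~4.6]{AntPerThi18TensorProdCu} and \cite{Rob13Cone}). Hence both $\Cu(A\otimes\mathcal{W})$ and $\Cu(A\otimes\mathcal{Z})$ satisfy \axiomO{5} and \axiomO{6}, so they lie within the scope of \autoref{prp:RZ_mult}.

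Next, I would invoke \cite[Proposition~7.6.3]{AntPerThi18TensorProdCu}, as recalled in the paragraph preceding the statement, to conclude that $\Cu(A\otimes\mathcal{W})$ carries a $[0,\infty]$-multiplication and that $\Cu(A\otimes\mathcal{Z})$ carries a $Z$-multiplication. Applying parts~(2) and~(3) of \autoref{prp:RZ_mult} respectively yields
\[
\dim(\Cu(A\otimes\mathcal{W}))=0 \andSep \dim(\Cu(A\otimes\mathcal{Z}))\leq 1.
\]

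Finally, for the \emph{in particular} part, I would note that if $A$ is $\mathcal{W}$-stable then $A\cong A\otimes\mathcal{W}$, so $\Cu(A)\cong \Cu(A\otimes\mathcal{W})$ and the first computation applies; analogously for $\mathcal{Z}$-stable \ca{s}. No obstacle is expected: the work has been done in \autoref{prp:RZ_mult} and the cited structural result, so the proof here is merely a two-line application. The only point to make explicit is that the hypotheses \axiomO{5}, \axiomO{6} of \autoref{prp:RZ_mult} are automatic for Cuntz semigroups of \ca{s}.
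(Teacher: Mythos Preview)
Your proposal is correct and matches the paper's approach exactly: the paper does not even include a written proof for this proposition, treating it as an immediate consequence of \autoref{prp:RZ_mult} together with the fact (recalled in the preceding paragraph, from \cite[Proposition~7.6.3]{AntPerThi18TensorProdCu}) that $\Cu(A\otimes\mathcal{W})$ has $[0,\infty]$-multiplication and $\Cu(A\otimes\mathcal{Z})$ has $Z$-multiplication. Your added remark that \axiomO{5} and \axiomO{6} hold automatically for Cuntz semigroups of \ca{s} is the only point one might make explicit, and it is already recorded in the preliminaries.
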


\begin{exa}
Let $X$ be a compact, metrizable space containing at least two points, and let $S:=\Lsc(X,\NNbar)_{++}\cup\{ 0\}$ be the sub-\CuSgp{} of $\Lsc(X,\NNbar)$ consisting of strictly positive functions and $0$.
Then $\dim(S)=\infty$. 

Indeed, assume for the sake of contradiction that $\dim(S)\leq n$ for some $n\in\NN$, and take $r>n$.
Since $X$ contains at least two points, we can choose open subsets $U',U\subset X$ such that
\[
\emptyset\neq U', \quad
\overline{U'}\subseteq U, \andSep
U\neq X.
\]
Let $\chi_{U'}$ and $\chi_U$ denote the corresponding characteristic functions.
Consider the elements $x':=1+(n+1)\chi_{U'}$ and $x:=1+(n+1)\chi_{U}$ in $S$. 
Then, we have $x'\ll x\ll r+1=1+\ldots_{r+1}+1$ in $S$.
 
Using that $\dim(S)\leq n$, we obtain elements $z_{j,k}\in S$ for $j=1,\ldots,r+1$ and $k=0,\ldots,n$ satisfying (i)-(iii) from \autoref{dfn:dim}.
By condition~(i), we have $z_{j,k}\ll 1$ and therefore $z_{j,k}=0$ or $z_{j,k}=1$ for each $j,k$.

Given $k\in\{0,\ldots,n\}$, we have $\sum_{j}z_{j,k}\ll x$ by condition~(iii), and thus all but possibly one of the elements $z_{1,k},\ldots,z_{r,l}$ are zero.
Thus, $\sum_{j}z_{j,k}\leq 1$.
Using this at the last step, and using condition~(ii) at the first step, we get
\[
x'\ll \sum_{j,k}z_{j,k} = \sum_{k=0}^n\big( \sum_{j=1}^r z_{j,k} \big) \leq n+1,
\]
a contradiction.

Note that $S$ does not arise as the Cuntz semigroup of a \ca{} since it does not satisfy \axiomO{5}.
(Take, for instance, $1\ll 1\ll 1+\chi_U$ with $U\neq X$.)
\end{exa}

\section{Commutative and subhomogeneous \texorpdfstring{$C^*$}{C*}-algebras}
\label{sec:nuclDim}

In this section, we first prove that the dimension of the Cuntz semigroup of a \ca{} $A$ is bounded by the nuclear dimension of $A$; 
see \autoref{prp:Nuclear_bound}.
For every compact, Hausdorff space $X$, we show that the dimension of the Cuntz semigroup of $C(X)$ agrees with the dimension of $X$;
see \autoref{prp:CommutativeUnital}.
More generally, on the class of subhomogeneous \ca{s}, the dimension of the Cuntz semigroup agrees with the topological dimension, which in turn is equal to the nuclear dimension;
see \autoref{prp:SH}.

\begin{thm}
\label{prp:Nuclear_bound}
Let $A$ be a \ca.
Then $\dim(\Cu(A))\leq\dimnuc(A)$.
\end{thm}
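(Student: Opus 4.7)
The strategy is to translate the $(n+1)$-colored completely positive approximations afforded by $\dimnuc(A)\le n$ into $(n+1)$-colored refinements at the level of $\Cu(A)$, with the colors indexed by the summands of the finite-dimensional approximant. Since $\dimnuc$ is stable under stabilization and $\Cu(A)=\Cu(A\otimes\KK)$, I may replace $A$ by $A\otimes\KK$. Assume $n:=\dimnuc(A)<\infty$ and fix $x'\ll x\ll y_1+\ldots+y_r$ in $\Cu(A)$, with positive contractive representatives $a',a,b_1,\ldots,b_r\in A_+$. Using the standard description of $\ll$ in $\Cu$, choose $\eta>0$ with $a'\precsim(a-\eta)_+$ and $a\precsim(b_1+\ldots+b_r-\eta)_+$, and fix witnessing elements via R\o rdam's lemma.

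For sufficiently small $\varepsilon>0$ (depending on $\eta$ and the witnesses), $\dimnuc(A)\le n$ applied to the finite set consisting of $a,b_1,\ldots,b_r$ and the witnesses yields a completely positive contractive map $\psi\colon A\to F=F^{(0)}\oplus\cdots\oplus F^{(n)}$ into a finite-dimensional \ca{} together with completely positive contractive order-zero maps $\varphi_k\colon F^{(k)}\to A$ satisfying $\|\sum_k\varphi_k\psi_k(d)-d\|<\varepsilon$ for every $d$ in the finite set. By \cite{WinZac10NuclDim}, each $\varphi_k$ induces a generalized \CuMor{} $\Cu(\varphi_k)\colon \Cu(F^{(k)})\to\Cu(A)$, and $\Cu(F^{(k)})\cong\NNbar^{\ell_k}$ is zero-dimensional as a \CuSgp.

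Inside each summand $F^{(k)}$ I would then extract positive elements $c_{j,k}\in F^{(k)}_+$ such that $c_{j,k}\precsim\psi_k(b_j)$ and $\sum_jc_{j,k}\precsim\psi_k(a)$ in $F^{(k)}$, using the zero-dimensionality of $\Cu(F^{(k)})$: the required Cuntz relation between $\psi_k(a)$ and the $\psi_k(b_j)$'s in $F^{(k)}$ is obtained by pushing the $A$-level witness of $a\precsim(b_1+\ldots+b_r-\eta)_+$ through $\psi_k$ and cutting down to enforce a strict subequivalence. Then set
\[
z_{j,k}:=[\varphi_k(c_{j,k})]\in\Cu(A).
\]
Condition~(i) of \autoref{dfn:dim}, $z_{j,k}\ll y_j$, follows from $c_{j,k}\precsim\psi_k(b_j)$ combined with $\|\sum_l\varphi_l\psi_l(b_j)-b_j\|<\varepsilon$ and R\o rdam's lemma; condition~(iii), $\sum_jz_{j,k}\ll x$, follows analogously from $\sum_jc_{j,k}\precsim\psi_k(a)$ and $\|\sum_l\varphi_l\psi_l(a)-a\|<\varepsilon$; condition~(ii), $x'\ll\sum_{j,k}z_{j,k}$, follows from $a'\precsim(a-\eta)_+$ together with the norm approximation of $a$ by $\sum_{j,k}\varphi_k(c_{j,k})$, once $\varepsilon$ is small relative to $\eta$.

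The principal obstacle is reconciling the operator-norm nature of the nuclear dimension approximation with the Cuntz-theoretic conditions of \autoref{dfn:dim}. In particular, $\psi$ is only completely positive (not multiplicative or order-zero), so transporting the Cuntz relation $a\precsim\sum_jb_j$ in $A$ to a Cuntz relation between $\psi_k(a)$ and the $\psi_k(b_j)$'s in $F^{(k)}$ is subtle. The key technical device will be repeated use of R\o rdam's lemma to absorb the $\varepsilon$-slack into $\eta$-cuts, combined with the order-zero functional calculus of the $\varphi_k$'s (via the supporting *-homomorphism $C_0((0,1])\otimes F^{(k)}\to A$) to transport Cuntz inequalities between $F^{(k)}$ and $A$. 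The most delicate step is arranging the $c_{j,k}$'s so that $[\varphi_k(c_{j,k})]$ lies genuinely way-below $y_j$, which will require careful calibration of $\varepsilon$, $\eta$, and the choice of $c_{j,k}$.
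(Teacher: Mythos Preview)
Your overall plan is sound and you correctly locate the central obstacle, but the sketch does not actually overcome it, and the paper takes a different route precisely to avoid it.

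The gap is this. You need two things that a bare cpc map $\psi$ does not provide. First, to run the zero-dimensional decomposition inside $F^{(k)}$ you need $[\psi_k(a)]\leq\sum_j[\psi_k(b_j)]$ in $\Cu(F^{(k)})$; but $\psi_k$ is neither multiplicative nor order-zero, so ``pushing the witness of $a\precsim(\sum_j b_j-\eta)_+$ through $\psi_k$'' yields nothing. Second, for condition~(i) you assert that $c_{j,k}\precsim\psi_k(b_j)$ together with $\|\sum_l\varphi_l\psi_l(b_j)-b_j\|<\varepsilon$ gives $[\varphi_k(c_{j,k})]\ll[b_j]$. It does not: from the norm estimate one only gets $\varphi_k(\psi_k(b_j))\leq\sum_l\varphi_l\psi_l(b_j)$, and the latter is merely norm-close to $b_j$, which controls $[(\sum_l\varphi_l\psi_l(b_j)-\varepsilon)_+]$ but says nothing about $[\varphi_k(\psi_k(b_j))]$ versus $[b_j]$. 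The same issue recurs for~(iii), and for~(ii) you would need $\sum_{j,k}\varphi_k(c_{j,k})$ to be norm-close to $a$, whereas the $c_{j,k}$ are only Cuntz-related to $\psi_k(a)$. R\o rdam's lemma alone does not bridge these gaps.

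The paper sidesteps all of this by invoking \cite[Proposition~2.2]{Rob11NuclDimComp}: one passes to an ultrapower $A_{\mathcal{U}}$ and replaces each $F^{(k)}$ by an ultraproduct $\prod_{\mathcal{U}}F_{\lambda,k}$ of finite-dimensional algebras. The point of Robert's result is that in this setting the downward maps $\psi_k\colon A\to\prod_{\mathcal{U}}F_{\lambda,k}$ can be taken to be \emph{order-zero} as well, and the factorization $\iota=\sum_k\varphi_k\circ\psi_k$ holds \emph{exactly}, where $\iota\colon A\to A_{\mathcal{U}}$ is the canonical embedding. Now both $\psi_k$ and $\varphi_k$ induce generalized \CuMor{s}, the exact equality immediately gives $\bar\varphi_k(\bar\psi_k(x))\leq\bar\iota(x)\leq\sum_l\bar\varphi_l(\bar\psi_l(x))$ for every $x\in\Cu(A)$, and since $\prod_{\mathcal{U}}F_{\lambda,k}$ has real rank zero its Cuntz semigroup is zero-dimensional, so the required refinement can be carried out there cleanly. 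One then lifts the resulting elements from $\Cu(A_{\mathcal{U}})$ back to $\Cu(A)$ by choosing matrix representatives and selecting a suitable coordinate $\lambda$.
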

\begin{proof}
Set $n:=\dimnuc(A)$, which we may assume to be finite.
By \cite[Proposition~2.2]{Rob11NuclDimComp}, there exists an ultrafilter $\mathcal{U}$ on an index set $\Lambda$, and finite-dimensional C*-algebras $F_{\lambda,k}$ for $\lambda\in\Lambda$ and $k=0,\ldots,n$, and completely positive, contractive (cpc.)\ order-zero maps $\psi_k\colon A\to \prod_{\mathcal{U}}F_{\lambda,k}$ and $\varphi_k\colon \prod_{\mathcal{U}}F_{\lambda,k}\to A_{\mathcal{U}}$ such that
\[
\iota = \sum_{k=0}^n \varphi_k\circ\psi_k,
\]
where $\iota\colon A\to A_{\mathcal{U}}$ denotes the natural inclusion map.

A cpc.\ order-zero map $\alpha\colon C\to D$ between \ca{s} induces a generalized \CuMor{} $\bar{\alpha}\colon\Cu(C)\to\Cu(D)$; 
see, for example, \cite[Paragraph~3.2.5]{AntPerThi18TensorProdCu}.

The equality $\iota = \sum_{k=0}^n \varphi_k\circ\psi_k$ implies that
\[
\bar{\varphi}_l(\bar{\psi}_l(x))
\leq \bar{\iota}(x)
\leq \sum_{k=0}^n \bar{\varphi}_k(\bar{\psi}_k(x))
\]
for each $x\in\Cu(A)$ and each $l\in\{0,\ldots,n\}$.

To verify $\dim(\Cu(A))\leq n$, let $x',x,y_1,\ldots,y_r\in\Cu(A)$ satisfy
\[
x' \ll x \ll y_1+\ldots+y_r.
\]

For each $k\in\{0,\ldots,n\}$, set $x_k := \bar{\psi}_k(x) \in \Cu(\prod_{\mathcal{U}}F_{\lambda,k})$.
We have
\[
\bar{\iota}(x')
\ll \bar{\iota}(x) 
\leq \sum_{k=0}^n \bar{\varphi}_k(\bar{\psi}_k(x))
= \sum_{k=0}^n \bar{\varphi}_k(x_k).
\]

Using that $\bar{\varphi}_k$ preserves suprema of increasing sequences, we can choose an element $x_k'\in\Cu(\prod_{\mathcal{U}}F_{\lambda,k})$ such that $x_k'\ll x_k$ and 
\[
\bar{\iota}(x')
\ll \sum_{k=0}^n \bar{\varphi}_k(x_k').
\]

Given $k\in\{0,\ldots,n\}$, we have
\[
x_k' 
\ll x_k 
= \bar{\psi}_k(x) 
\leq \bar{\psi}_k(\sum_{j=1}^r y_j)
= \sum_{j=1}^r \bar{\psi}_k(y_j).
\]
Since $\prod_{\mathcal{U}}F_{\lambda,k}$ has real rank zero, we obtain $z_{1,k},\ldots,z_{r,k}\in\Cu(\prod_{\mathcal{U}}F_{\lambda,k})$ such that $z_{j,k}\leq \bar{\psi}_k(y_j)$ for $j=1,\ldots,r$ and
\[
x_k' \leq \sum_{j=1}^r z_{j,k} \leq x_k.
\]

We now consider the elements $\bar{\varphi}_k(z_{j,k})\in\Cu(A_{\mathcal{U}})$.
For each $j$ and $k$, we have
\[
\bar{\varphi}_k( z_{j,k} )
\leq \bar{\varphi}_k( \bar{\psi}_k(y_j) )
\leq \bar{\iota}(y_j).
\]
Further, we have
\[
\bar{\iota}(x')
\ll \sum_{k=0}^n \bar{\varphi}_k(x_k')
\leq \sum_{k=0}^n \bar{\varphi}_k( \sum_{j=1}^r z_{j,k} )
= \sum_{k=0}^n \sum_{j=1}^r \bar{\varphi}_k( z_{j,k} ).
\]
For each $k\in\{0,\ldots,n\}$, we also have
\[
\sum_{j=1}^r \bar{\varphi}_k( z_{j,k} )
= \bar{\varphi}_k( \sum_{j=1}^r z_{j,k} )
\leq \bar{\varphi}_k( x_k )
= \bar{\varphi}_k( \bar{\psi}_k(x) )
\leq \bar{\iota}(x).
\]

Since the classes of elements in $\bigcup_{N\in\NN}(A_{\mathcal{U}}\otimes M_N )_{+}$ are sup-dense in $\Cu (A_{\mathcal{U}})$, there exist $N\in\NN$ and positive elements $c_{j,k}\in A_{\mathcal{U}}\otimes M_N$ such that $[c_{j,k}]\ll \bar{\varphi}_k( z_{j,k} )$ and $\bar{\iota}(x')
\ll \sum_{j,k}[c_{j,k}]$.

We have $A_{\mathcal{U}} = \prod_\lambda A / c_{\mathcal{U}}$, where
\[
c_{\mathcal{U}} = \{ (a_\lambda)_\lambda \in \prod_\lambda A : \lim_{\lambda\to\mathcal{U}}\|a_\lambda\|=0 \}.
\]
We let $\pi\colon \prod_\lambda A \to A_{\mathcal{U}}$ denote the quotient map.

We have $A_{\mathcal{U}}\otimes M_N \cong (A\otimes M_N)_{\mathcal{U}}$.
We also use $\pi$ to denote its amplification to matrix algebras.
Choose positive elements $c_{j,k,\lambda}\in A\otimes M_N$ such that $\pi((c_{j,k,\lambda})_\lambda) = c_{j,k}$.
Then, for a sufficiently large $\lambda$, the elements $[c_{j,k,\lambda}]\in\Cu(A)$ satisfy the conditions of Lemma \ref{prp:DimWeak} for $x'\ll x\ll y_1+\cdots +y_r$, as desired.
\end{proof}

We will show in \autoref{prp:SH} that for subhomogeneous \ca{s}, the dimension of the Cuntz semigroup is \emph{equal} to the nuclear dimension.
To prove this, we first apply some model theoretic techniques to reduce the problem to separable, subhomogeneous \ca{s};
see \autoref{prp:SubSepSH}.

\begin{pgr}
Recall that the \emph{local dimension} $\locdim(X)$ of a topological space $X$ is defined as the smallest $n\in\NN$ such that every point in $X$ has a closed neighborhood of covering dimension at most $n$;
see \cite[Definition~5.1.1, p.188]{Pea75DimThy}.
If $X$ is a locally compact, Hausdorff space, then
\[
\locdim(X) = \sup \big\{ \dim(K) : K\subseteq X \text{ compact } \big\}.
\] 
If $X$ is $\sigma$-compact, locally compact and Hausdorff, then $\locdim(X)=\dim(X)$, but in general $\locdim(X)$ can be strictly smaller than $\dim(X)$. 
If $X$ is locally compact, Hausdorff but not compact, then it follows from \cite[Proposition~3.5.6]{Pea75DimThy} that $\locdim(X)$ agrees with the the dimension of $\alpha X$, the one-point compactification of~$X$.
\end{pgr}

\begin{pgr}
\label{pgr:SH}
Let $d\in\NN$ with $d\geq 1$.
Recall that a \ca{} $A$ is said to be \emph{$d$-(sub)homo\-ge\-neous} if every irreducible representation of $A$ has dimension (at most) $d$.
Further, $A$ is \emph{(sub)homogeneous} if it is $d$-(sub)homogeneous for some $d$.
If $A$ is $d$-subhomogeneous, then so is every sub-\ca{} of $A$. Let us briefly recall the main structure theorems for (sub)homogeneous \ca{s}.
For details, we refer to \cite[Sections~IV.1.4, IV.1.7]{Bla06OpAlgs}.

Given a locally trivial $M_d(\CC)$-bundle over a locally compact, Hausdorff space $X$, the algebra of sections vanishing at infinity is a $d$-homogeneous \ca{} with primitive ideal space homeomorphic to $X$.
Moreover, every homogeneous \ca{} arises this way.

Let $A$ be a $d$-subhomogeneous \ca.
For each $k\geq 2$, let $I_{\geq k}\subseteq A$ be the set of elements $a\in A$ such that $\pi(a)=0$ for every irreducible representation $\pi$ of $A$ of dimension at most $k-1$.
Set $I_{\geq 1} = A$.
Then
\[
\{0\}=I_{\geq d+1} \subseteq I_{\geq d} \subseteq \ldots \subseteq I_{\geq 2} \subseteq I_{\geq 1} = A
\]
is an increasing chain of (closed, two-sided) ideals of $A$.
For each $k\geq 1$, the canonical $k$-homogeneous ideal-quotient (that is, an ideal of a quotient) of $A$ is
\[
A_k := I_{\geq k}/I_{\geq k+1}.
\]
Note that $A_k=\{0\}$ for $k\geq d+1$.

For each $k\geq 1$, we have a short exact sequence
\[
0 \to A_{k+1} \to A/I_{\geq k+1} \to A/I_{\geq k} \to 0.
\]
In particular, $A/I_{\geq 3}$ is an extension of $A/I_{\geq 2}=A_1$ by $A_2$.
Then $A/I_{\geq 4}$ is an extension of $A/I_{\geq 3}$ by $A_3$, and so on.
Finally, $A$ is an extension of $A/I_{\geq d-1}$ by $A_d$.
Thus, every subhomogeneous \ca{} is obtained as a finite sequence of successive extensions of homogeneous \ca{s}.

In \cite{BroPed09Limits}, Brown and Pedersen introduced the \emph{topological dimension} for certain \ca{s}, including all type~$\mathrm{I}$ \ca{s}.
We only recall the definition for subhomogeneous \ca{s}.
First, if $A$ is homogeneous, then its primitive ideal space $\Prim(A)$ is locally compact and Hausdorff, and then the topological dimension of $A$ is defined as $\topdim(A):=\locdim(\Prim(A))$.

If $A$ is subhomogeneous, then the topological dimension of $A$ is defined as the maximum of the topological dimensions of the canonical homogeneous ideal-quotients:
\[
\topdim(A) 
:= \max_{k=1,\ldots,d} \topdim(A_k)
= \max_{k=1,\ldots,d} \locdim(\Prim(A_k)).
\]
\end{pgr}

Given a \ca{} $A$, we use $\SubSep(A)$ to denote the collection of separable sub-\ca{s} of $A$.
A family $\mathcal{S}\subseteq\SubSep(A)$ is said to be \emph{$\sigma$-complete} if for every countable, directed subfamily $\mathcal{T}\subseteq\mathcal{S}$ we have $\overline{\bigcup\{B:B\in\mathcal{T}\}}\in\mathcal{S}$.
Further, a family $\mathcal{S}\subseteq\SubSep(A)$ is said to be \emph{cofinal} if for every $B_0\in\SubSep(A)$ there exists $B\in\mathcal{S}$ with $B_0\subseteq B$.

\begin{prp}
\label{prp:SubSepSH}
Let $n\in\NN$.
Then for every subhomogeneous \ca{} $A$ satisfying $\topdim(A)\leq n$, the set
\[
\big\{ B\in\SubSep(A) : \topdim(B)\leq n \big\}
\]
is $\sigma$-complete and cofinal.
\end{prp}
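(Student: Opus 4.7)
The plan is to split the proof into two parts --- $\sigma$-com\-ple\-te\-ness and cofinality --- using as a bridge the theorem of Winter that $\topdim(C)=\dimnuc(C)$ for every subhomogeneous \ca{} $C$. In particular, $\topdim(A)\leq n$ is equivalent to $\dimnuc(A)\leq n$, so the question reduces to one about nuclear dimension, for which the standard permanence and model-theoretic machinery are available.

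For $\sigma$-com\-ple\-te\-ness, let $\mathcal{T}=\{B_k\}_{k\in\NN}$ be a countable, directed subfamily with $\topdim(B_k)\leq n$ and set $B:=\overline{\bigcup_k B_k}$. Then $B$ is a separable sub-\ca{} of $A$, hence subhomogeneous, and is a sequential inductive limit $B=\varinjlim_k B_k$. Using the lower semicontinuity of nuclear dimension along inductive limits (Winter--Zacharias), we get
\[
\dimnuc(B)\leq\liminf_k \dimnuc(B_k)=\liminf_k \topdim(B_k)\leq n,
\]
whence $\topdim(B)=\dimnuc(B)\leq n$ by Winter's identification applied to $B$.

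For cofinality, given $B_0\in\SubSep(A)$, I would build a separable $B\supseteq B_0$ with $\dimnuc(B)\leq n$ by a L\"owenheim--Skolem iteration. Starting from $B_0$, at each stage $j$ I would fix a countable dense subset of the current $B_j$ and, for each $a$ in that subset and each rational $\varepsilon>0$, invoke $\dimnuc(A)\leq n$ to obtain finite-dimensional algebras $F_0,\ldots,F_n$ together with cpc order-zero maps $\psi_k\colon A\to F_k$ and $\varphi_k\colon F_k\to A$ satisfying $\|a-\sum_{k=0}^{n}\varphi_k\psi_k(a)\|<\varepsilon$. I would then enlarge $B_j$ to $B_{j+1}$ by adjoining the (finite-dimensional, hence separable) images $\varphi_k(F_k)$ chosen along all of these countably many witnesses. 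Setting $B:=\overline{\bigcup_j B_j}$ yields a separable sub-\ca{} of $A$ containing $B_0$ in which every element is arbitrarily well approximated by compositions $\varphi_k\circ\psi_k|_B$ whose ranges lie in $B$, witnessing $\dimnuc(B)\leq n$. Since $B$ is subhomogeneous, Winter's theorem gives $\topdim(B)\leq n$.

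The main obstacle is the bookkeeping in the L\"owenheim--Skolem step: at each stage one must simultaneously arrange that the ranges of the $\varphi_k$ already lie in $B$ for a dense set of inputs $a\in B$, while keeping $B$ separable. This is precisely the standard $\sigma$-complete, cofinal construction for axiomatizable properties of separable \ca{s}, and corresponds to what is meant by "model theoretic techniques" in the introduction to this section; once the formalism is in place the verification is essentially routine.
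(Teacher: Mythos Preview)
Your $\sigma$-completeness argument is fine: all the algebras $B_k$ and $B$ are separable, and for separable subhomogeneous \ca{s} the identification $\topdim=\dimnuc$ is available from Winter (for $\dr=\topdim$) together with Winter--Zacharias. The paper instead cites a permanence result for $\topdim$ under local approximation, but your route through $\dimnuc$ is a legitimate alternative.

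The cofinality argument, however, has a genuine circularity. To run the L\"owenheim--Skolem step you need $\dimnuc(A)\leq n$ for the ambient, possibly \emph{non}-separable, \ca{} $A$. You obtain this by quoting ``Winter's theorem that $\topdim(C)=\dimnuc(C)$ for every subhomogeneous \ca{} $C$,'' but Winter's result is only stated and proved for separable $C$. The extension to arbitrary subhomogeneous \ca{s} is exactly \autoref{prp:dimnucSH} in this paper --- and that theorem is proved \emph{using} \autoref{prp:SubSepSH}. So your cofinality argument assumes what is, in the paper's logical structure, a consequence of the very proposition you are proving. Nor is there an obvious direct route: the usual extension estimate $\dimnuc(B)\leq\dimnuc(I)+\dimnuc(B/I)+1$ is too weak to give $\dimnuc(A)\leq n$ from the homogeneous pieces, and Winter's sharper argument for separable subhomogeneous algebras does not transparently pass to the non-separable case.

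The paper avoids this by working directly with $\topdim$ and never invoking $\dimnuc(A)$. It proceeds by induction on the degree $d$ of subhomogeneity: for $(d{+}1)$-subhomogeneous $A$, the canonical ideal $I=I_{\geq d+1}$ is $(d{+}1)$-homogeneous and $A/I$ is $d$-subhomogeneous, both with $\topdim\leq n$. A separate result for the homogeneous case (cited as Fact~1, from \cite{Thi20arX:grSubhom}) handles $I$, the induction hypothesis handles $A/I$, and a standard three-space argument for $\SubSep$ combines the two cofinal families. This structural approach is what makes the subsequent deduction of $\dimnuc(A)=\topdim(A)$ in \autoref{prp:dimnucSH} non-circular.
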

\begin{proof}
We will use the following facts.
The first is a consequences of \cite[Proposition~3.5]{Thi20arX:grSubhom}, the second follows from \cite[Proposition~2.2]{BroPed09Limits}.

\textbf{Fact~1:}
\emph{Given a homogeneous \ca{} $B$ with $\locdim(B)\leq n$, the collection
\[
\big\{ C\in\SubSep(B) : \topdim(C)\leq n \big\}
\]
is $\sigma$-complete and cofinal.}

\textbf{Fact~2:}
\emph{If $B$ is subhomogeneous and $I\subseteq B$ is an ideal, then}
\[
\topdim(B)=\max\{\topdim(I),\topdim(B/I)\}.
\]

We prove the result for $d$-subhomogeneous \ca{s} by induction over $d$.
First, note that a \ca{} is $1$-subhomogeneous if and only if it is $1$-homogeneous (if and only if it is commutative).
In this case, the result follows directly from Fact~1.

Next, let $d\geq 1$ and assume that the result holds for every $d$-subhomogeneous \ca.
Let $A$ be $(d+1)$-subhomogeneous.
We need to show that the set $\mathcal{S} := \{ B\in\SubSep(A) : \topdim(B)\leq n \}$ is $\sigma$-complete and cofinal.

To verify that $\mathcal{S}$ is $\sigma$-complete, let $\mathcal{T}\subseteq\mathcal{S}$ be a countable, directed family.
Set $C:=\overline{\bigcup\{B:B\in\mathcal{T}\}}$.
Then $C$ is a separable \ca{} that is approximated by the sub-\ca{s} $B\subseteq C$ for $B\in\mathcal{T}$ that each satisfy $\topdim(B)\leq n$.
By \cite[Proposition~8]{Thi13TopDimTypeI}, we have $\topdim(C)\leq n$.
Thus, $C\in\mathcal{S}$, as desired.

Next, we verify that $\mathcal{S}$ is cofinal.
Set $I:=I_{\geq d+1}\subseteq A$, the ideal of all elements in~$A$ that vanish under all irreducible representations of dimension at most $d$.
Then $I$ is $(d+1)$-homogeneous and $A/I$ is $d$-subhomogeneous.
By Fact~2, we have $\topdim(I)\leq n$ and $\topdim(A/I)\leq n$.
By Fact~1 and by the assumption of the induction, the collections
\begin{align*}
\mathcal{T}_1 &:= \big\{ C\in\SubSep(I) : \topdim(C)\leq n \big\}, \\
\mathcal{T}_2 &:= \big\{ D\in\SubSep(A/I) : \topdim(D)\leq n \big\}, 
\end{align*}
are $\sigma$-complete and cofinal.
By \cite[Lemma~3.2]{Thi20arX:grSubhom}, it follows that the families
\begin{align*}
\mathcal{S}_1 &:= \big\{ B\in\SubSep(A) : \topdim(B\cap I)\leq n \big\}, \\
\mathcal{S}_2 &:= \big\{ B\in\SubSep(A) : \topdim(B/(B\cap I))\leq n \big\}, 
\end{align*}
are $\sigma$-complete and cofinal.
Then $\mathcal{S}_1\cap\mathcal{S}_2$ is $\sigma$-complete and cofinal as well.
Given $B\in\mathcal{S}_1\cap\mathcal{S}_2$, it follows from Fact~2 that
\[
\topdim(B) = \max\{\topdim(B\cap I),\topdim(B/(B\cap I))\} \leq n.
\]
Thus, $\mathcal{S}_1\cap\mathcal{S}_2\subseteq\mathcal{S}$.
Since $\mathcal{S}_1\cap\mathcal{S}_2$ is cofinal, so is $\mathcal{S}$.
\end{proof}

We deduce a result that is probably known to the experts, but which does not appear in the literature so far.
The equality of the topological dimension and the decomposition rank $\dr(A)$ of a \emph{separable} subhomogeneous \ca{} $A$ was shown in \cite{Win04DRSubhomogeneous}.

\begin{thm}
\label{prp:dimnucSH}
Let $A$ be a subhomogeneous \ca{}.
Then
\[
\dimnuc(A)=\dr(A)=\topdim(A).
\]
\end{thm}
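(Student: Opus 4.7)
The plan is to prove the chain of inequalities
\[
\topdim(A) \leq \dimnuc(A) \leq \dr(A) \leq \topdim(A),
\]
in which the middle inequality $\dimnuc(A) \leq \dr(A)$ is classical. So the real work lies in establishing the two outer inequalities.

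For $\dr(A) \leq \topdim(A)$, I would set $n := \topdim(A)$ (assumed finite) and apply \autoref{prp:SubSepSH} to obtain a $\sigma$-complete, cofinal family $\mathcal{S} \subseteq \SubSep(A)$ of separable sub-\ca{s} $B$ with $\topdim(B) \leq n$. Each such $B \in \mathcal{S}$ satisfies $\dr(B) \leq n$ by Winter's theorem for the separable case \cite{Win04DRSubhomogeneous}. Since cofinality of $\mathcal{S}$ implies that every finite subset of $A$ is contained in some $B \in \mathcal{S}$, the completely positive order-zero approximations witnessing $\dr(B) \leq n$ are themselves valid approximations in the ambient algebra $A$, giving $\dr(A) \leq n$.

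For $\topdim(A) \leq \dimnuc(A)$, I would exploit the canonical ideal filtration of \autoref{pgr:SH} with homogeneous subquotients $A_k := I_{\geq k}/I_{\geq k+1}$. Since nuclear dimension does not increase under taking ideals or quotients, $\dimnuc(A_k) \leq \dimnuc(A)$ for each $k$, reducing the task to showing $\dimnuc(B) \geq \locdim(\Prim(B))$ for a homogeneous \ca{} $B$. For any compact subset $K$ of $X := \Prim(B)$, the quotient $B_K$ of $B$ corresponding to restriction to $K$ is a continuous-trace \ca{} over $K$ of constant fiber dimension, for which one has $\dimnuc(B_K) = \dim(K)$. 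Combining with the monotonicity $\dimnuc(B_K) \leq \dimnuc(B)$ and taking the supremum over compact $K \subseteq X$ gives the desired lower bound $\locdim(X) \leq \dimnuc(B)$. Maximizing over $k$ then yields $\topdim(A) = \max_k \topdim(A_k) \leq \dimnuc(A)$.

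The hardest ingredient is the identity $\dimnuc(B_K) = \dim(K)$ in the homogeneous case: when the underlying $M_d$-bundle has nontrivial Dixmier--Douady class, $B_K$ is not Morita equivalent to $C(K)$, so one cannot invoke Morita invariance of $\dimnuc$ directly. Instead, one has to argue locally, using that the bundle trivializes over sufficiently small open subsets of $K$ and that $\dimnuc$ is determined by cpc order-zero approximations on finite sets. The remaining ingredients---monotonicity of $\dimnuc$ under ideals and quotients, Winter's separable computation of $\dr$, and the $\sigma$-completeness and cofinality supplied by \autoref{prp:SubSepSH}---are all routine.
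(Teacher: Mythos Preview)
Your argument for $\dr(A)\leq\topdim(A)$ matches the paper's exactly: reduce to the separable case via \autoref{prp:SubSepSH} and invoke Winter's theorem.

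For $\topdim(A)\leq\dimnuc(A)$ you take a genuinely different route. The paper again passes to separable sub-\ca{s}: it uses that the family $\{B\in\SubSep(A):\dimnuc(B)\leq m\}$ is cofinal (\cite[Proposition~2.6]{WinZac10NuclDim}), applies the known separable identity $\topdim(B_k)=\dimnuc(B_k)$ to the canonical homogeneous subquotients of each such $B$, and then invokes \cite[Proposition~8]{Thi13TopDimTypeI} to conclude that $\topdim(A)\leq m$ since $A$ is approximated by algebras of topological dimension at most $m$. Your approach instead works directly with the canonical filtration of $A$ itself, using monotonicity of $\dimnuc$ under ideals and quotients and a local-triviality argument for the homogeneous pieces. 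This is more direct and avoids the approximation result for $\topdim$, at the cost of needing the lower bound $\dim(Y)\leq\dimnuc(C(Y))$ for \emph{arbitrary} compact Hausdorff $Y$ (not just second-countable). That lower bound is indeed provable without metrizability---the standard argument extracting an $(n{+}1)$-colorable refinement from a cpc approximation uses only normality---but note that \cite[Proposition~2.4]{WinZac10NuclDim} is stated for second-countable spaces, so you should say explicitly which direction you need and why it holds in general. Also, you only use the inequality $\dim(K)\leq\dimnuc(B_K)$, not the full equality you state; the reverse inequality would be circular at this point in the proof.
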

\begin{proof}
As noted in \cite[Remarks~2.2(ii)]{WinZac10NuclDim}, the inequality $\dimnuc(B)\leq\dr(B)$ holds for every \ca{} $B$.
To verify $\dr(A)\leq\topdim(A)$, set $n:=\topdim(A)$.
We may assume that $n$ is finite.
By \autoref{prp:SubSepSH}, the family
\[
\mathcal{S} := \big\{ B\in\SubSep(A) : \topdim(B)\leq n \big\}
\]
is cofinal.
Each $B\in\mathcal{S}$ is a separable, subhomogeneous \ca{}, whence we can apply \cite[Theorem~1.6]{Win04DRSubhomogeneous} to deduce that $\dr(B)=\topdim(B)\leq n$.
Thus, $A$ is approximated by the collection $\mathcal{S}$ consisting of \ca{s} with decomposition rank at most $n$.
It is straightforward to verify that this implies $\dr(A)\leq n$.

To verify $\topdim(A)\leq\dimnuc(A)$, set $m:=\dimnuc(A)$, which we may assume to be finite.
It follows from \cite[Proposition~2.6]{WinZac10NuclDim} that the family
\[
\mathcal{T} := \big\{ B\in\SubSep(A) : \dimnuc(B)\leq m \big\}
\]
is cofinal.
Let $B\in\mathcal{T}$.
Then $B$ is a separable, subhomogeneous \ca{}.
For each $k\geq 1$, let $B_k$ be the canonical $k$-homogeneous ideal-quotient of $B$;
see \autoref{pgr:SH}.
Using \cite[Corollary~2.10]{WinZac10NuclDim} at the first step, and using that the nuclear dimension does not increase when passing to ideals (\cite[Proposition~2.5]{WinZac10NuclDim}) or quotients (\cite[Proposition~2.3(iv)]{WinZac10NuclDim}) at the second step, we get
\[
\topdim(B_k)
= \dimnuc(B_k)
\leq \dimnuc(B) \leq m.
\]

Using that $B$ is obtained as a successive extension of $B_1$ by $B_2$, and then by $B_3$, and so on, it follows from \cite[Proposition~2.2]{BroPed09Limits} (see Fact~2 in the proof of \autoref{prp:SubSepSH}) that $\topdim(B)\leq m$.
Thus, $A$ is approximated by the collection~$\mathcal{T}$ consisting of \ca{s} with topological dimension at most $m$.
By \cite[Proposition~8]{Thi13TopDimTypeI}, we get $\topdim(A)\leq m$, as desired.
\end{proof}

\begin{lma}
\label{prp:CommutativeEstimate}
Let $X$ be a compact, Hausdorff space.
Then
\[
\dim(X)\leq\dim(\Cu(C(X))).
\]
\end{lma}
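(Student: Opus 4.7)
My plan is to reduce $\dim(X)\leq n:=\dim(\Cu(C(X)))$ to the colorable-refinement characterization of covering dimension from \autoref{rmk:dim}, which applies since compact Hausdorff spaces are normal. Thus, assuming $n<\infty$ and starting from an arbitrary finite open cover $\{U_{1},\ldots,U_{r}\}$ of $X$, the goal is to produce an $(n+1)$-colorable finite open refinement.

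First, I would convert the open cover into a covering relation in $\Cu(C(X))$. By normality of $X$ there is a partition of unity $f_{1},\ldots,f_{r}\in C(X)$ subordinate to $\{U_{j}\}$, so that $\{f_{j}>0\}\subseteq U_{j}$ and $\sum_{j}f_{j}=1$. Using $a+b\precsim a\oplus b$ for positive elements, one obtains $[1]=[\sum_{j}f_{j}]\leq [f_{1}]+\ldots+[f_{r}]$ in $\Cu(C(X))$; since $[1]$ is a compact element (the unit being a projection), this yields the required relation $[1]\ll [1]\ll [f_{1}]+\ldots+[f_{r}]$.

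Next, applying $\dim(\Cu(C(X)))\leq n$ to this relation produces elements $z_{j,k}\in\Cu(C(X))$ satisfying (i)--(iii) of \autoref{dfn:dim}. To translate these algebraic data into geometric data, I would invoke the pointwise rank map $\rho\colon\Cu(C(X))\to\Lsc(X,\NNbar)$ sending $[a]$ to $x\mapsto\rk(a(x))$; this is well-defined, sum-preserving, and order-preserving, using that the rank is lower semicontinuous on $\KK_+$. Since $z_{j,k}\ll [f_{j}]$ implies $z_{j,k}\leq [(f_{j}-\varepsilon)_+]$ for some $\varepsilon>0$, we deduce $\rho(z_{j,k})\leq\chi_{\{f_{j}>\varepsilon\}}$; being lower-semicontinuous, $\NNbar$-valued, and bounded above by $1$, the function $\rho(z_{j,k})$ must equal $\chi_{V_{j,k}}$ for some open $V_{j,k}\subseteq X$ with $\overline{V_{j,k}}\subseteq\{f_{j}\geq\varepsilon\}\subseteq U_{j}$. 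In particular, $\{V_{j,k}\}_{j,k}$ is a finite open refinement of $\{U_{j}\}$.

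Finally, I would read off the covering and coloring properties: applying $\rho$ to (ii) gives $1\leq\sum_{j,k}\chi_{V_{j,k}}$, so the $V_{j,k}$ cover $X$; applying $\rho$ to (iii) gives $\sum_{j}\chi_{V_{j,k}}\leq 1$ for each $k$, so the sets sharing a common color $k$ are pairwise disjoint. Thus $\{V_{j,k}\}$ is an $(n+1)$-colorable refinement, proving $\dim(X)\leq n$. The main technical obstacle is verifying the stated properties of $\rho$, in particular that $z\ll [f]$ for $f\in C(X)_+$ forces $\rho(z)$ to be $\{0,1\}$-valued with appropriately contained support; once this is in place, the rest is a direct dictionary between the algebraic conditions of \autoref{dfn:dim} and the topological conditions defining colorable open covers.
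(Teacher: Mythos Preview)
Your proposal is correct and follows essentially the same strategy as the paper's proof: pass from a finite open cover to a covering relation $[1]\ll[1]\ll[f_1]+\ldots+[f_r]$ in $\Cu(C(X))$, apply the dimension hypothesis, and read off an $(n+1)$-colorable open refinement via pointwise ranks. The only cosmetic differences are that the paper uses Urysohn functions (with $\sum_j f_j\geq 1$) rather than a partition of unity, and that the paper works directly with representatives $g_{j,k}\in(C(X)\otimes\KK)_+$ and their zero sets rather than phrasing the passage through the rank map $\rho\colon\Cu(C(X))\to\Lsc(X,\NNbar)$; your formulation via $\rho$ is arguably cleaner and is exactly the map the paper later uses in the proof of \autoref{cor:Lsc}.
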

\begin{proof}
Set $n:=\dim(\Cu(C(X)))$, which we may assume to be finite.
To verify that $\dim(X)\leq n$, let $\mathcal{U}=\{U_{1},\ldots,U_{r}\}$ be a finite open cover of $X$.
We need to find a $(n+1)$-colourable, finite, open refinement of $\mathcal{U}$;
see \autoref{rmk:dim}.

Since $X$ is a normal space, we can find an open cover $\mathcal{V}=\{V_{1},\ldots,V_{r}\}$ of $X$ such that $\overline{V_j}\subseteq U_j$ for each $j$;
see for example \cite[Proposition~1.3.9, p.20]{Pea75DimThy}.
For each $j$, by Urysohn's lemma we obtain a continuous function $f_j\colon X\to[0,1]$ that takes the value $1$ on $\overline{V_j}$ and that vanishes on $X\setminus U_j$.

We have $1 \leq f_1+\ldots+f_r$, and therefore
\[
[1] 
\ll [1] 
\leq [f_1+\ldots+f_r]
\leq [f_1]+\ldots+[f_r]
\]
in $\Cu(C(X))$.
Using that $\dim(\Cu(C(X)))\leq n$, we obtain elements $z_{j,k}\in\Cu(C(X))$ for $j=1,\ldots,r$ and $k=0,\ldots,n$ satisfying (i)-(iii) in \autoref{dfn:dim}.

For each $j$ and $k$, choose $g_{j,k}\in(C(X)\otimes\KK)_+$ such that $z_{j,k}=[g_{j,k}]$.
Viewing $g_{j,k}$ as a positive, continuous function $g_{j,k}\colon X\to\KK$, we set
\[
W_{j,k}:=\{x\in X : g_{j,k}(x)\neq 0 \}.
\]
Then $W_{j,k}$ is an open set.
Condition~(i) implies that $g_{j,k}=\lim_n h_nf_jh_n^*$ for some sequence $(h_n)_n$ in $C(X)\otimes\KK$.
Thus, $g_{j,k}(x)=0$ whenever $f_j(x)=0$, which shows that $W_{j,k}\subseteq U_j$.
Condition~(ii) implies that $X$ is covered by the sets $W_{j,k}$.
Thus, the family $\mathcal{W}:=\{W_{j,k}\}$ is a finite, open refinement of $\mathcal{U}$.

Let $k\in\{0,\ldots,n\}$.
Given $x\in X$, it follows from condition~(iii) that the rank of $g_{1,k}(x)\oplus\ldots\oplus g_{r,k}(x)$ is at most one.
This implies that at most one of $g_{1,k}(x),\ldots,g_{r,k}(x)$ is nonzero.
Thus, the sets $W_{1,k},\ldots,W_{r,k}$ are pairwise disjoint.

Hence, $\mathcal{W}$ is $(n+1)$-colourable, as desired.
\end{proof}

\begin{prp}
\label{prp:CommutativeUnital}
Let $X$ be a compact, Hausdorff space.
Then
\[
\dim(\Cu(C(X))) = \dim(X).
\]
\end{prp}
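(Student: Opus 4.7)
The plan is to combine the lower bound already established in \autoref{prp:CommutativeEstimate} with an upper bound obtained by routing through the nuclear dimension, using the identification of nuclear dimension with topological dimension in the subhomogeneous case (\autoref{prp:dimnucSH}).

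First, the inequality $\dim(X) \leq \dim(\Cu(C(X)))$ is exactly the content of \autoref{prp:CommutativeEstimate}, so nothing remains to be done in that direction.

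For the reverse inequality, I would argue as follows. By \autoref{prp:Nuclear_bound}, we have
\[
\dim(\Cu(C(X))) \leq \dimnuc(C(X)).
\]
Since $C(X)$ is commutative, it is $1$-homogeneous and in particular subhomogeneous, so \autoref{prp:dimnucSH} applies and gives $\dimnuc(C(X)) = \topdim(C(X))$. By the definition of topological dimension for homogeneous \ca{s} recalled in \autoref{pgr:SH}, we have $\topdim(C(X)) = \locdim(\Prim(C(X))) = \locdim(X)$, where we use the Gelfand identification $\Prim(C(X)) \cong X$. Finally, since $X$ is compact and Hausdorff (hence $\sigma$-compact and locally compact), the discussion immediately preceding \autoref{pgr:SH} yields $\locdim(X) = \dim(X)$. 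Chaining these equalities and inequalities produces the upper bound $\dim(\Cu(C(X))) \leq \dim(X)$.

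There is no real obstacle: once \autoref{prp:Nuclear_bound} and \autoref{prp:dimnucSH} are in place, the argument is a short diagram chase through known identifications. The one point that deserves a line of justification is the passage from $\topdim(C(X))$ to $\dim(X)$, which relies on $X$ being compact Hausdorff so that $\locdim$ and $\dim$ coincide; everything else is quotation of the results assembled earlier in the section.
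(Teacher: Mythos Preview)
Your proof is correct and follows essentially the same route as the paper: both obtain the lower bound from \autoref{prp:CommutativeEstimate} and the upper bound by combining \autoref{prp:Nuclear_bound} with the identification $\dimnuc(C(X))=\dim(X)$ coming from \autoref{prp:dimnucSH}. The only cosmetic difference is that you spell out the chain $\dimnuc=\topdim=\locdim=\dim$ explicitly, whereas the paper just invokes the known second-countable case and then \autoref{prp:dimnucSH} to pass to arbitrary compact Hausdorff $X$.
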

\begin{proof}
The inequality `$\geq$' is shown in \autoref{prp:CommutativeEstimate}.
By \cite[Proposition~2.4]{WinZac10NuclDim}, we have $\dim(X)=\dimnuc(C(X))$ if $X$ is second-countable.
By \autoref{prp:dimnucSH}, this  also holds for arbitrary compact, Hausdorff spaces.
Thus, the inequality `$\leq$' follows from \autoref{prp:Nuclear_bound}.
\end{proof}

\begin{cor}
\label{cor:Lsc}
Let $X$ be a compact, metrizable space. 
Then
\[
\dim( \Lsc(X,\NNbar) ) = \dim(X).
\]
\end{cor}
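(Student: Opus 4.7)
The plan is to obtain the corollary by combining the lower bound already established in \autoref{exa:Lsc} with the computation of $\dim(\Cu(C(X)))$ from \autoref{prp:CommutativeUnital}. Concretely, the earlier example shows $\dim(\Lsc(X,\NNbar))\geq\dim(X)$ directly from the characterization of covering dimension via colorable refinements, so only the reverse inequality remains.

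For the reverse inequality, I would split into cases according to whether $\dim(X)$ is finite or not. If $\dim(X)=\infty$ there is nothing to prove, so assume $\dim(X)<\infty$. Since $X$ is a finite-dimensional, compact, metrizable (hence second-countable) space, the classical description of the Cuntz semigroup of a commutative \ca{} over a finite-dimensional base (\cite[Theorem~5.15]{AntPerSan11PullbacksCu}, already invoked at the start of \autoref{exa:Lsc}) yields a \CuSgp{} isomorphism
\[
\Cu(C(X)) \cong \Lsc(X,\NNbar).
\]
Invoking \autoref{prp:CommutativeUnital} then gives
\[
\dim(\Lsc(X,\NNbar)) = \dim(\Cu(C(X))) = \dim(X),
\]
which in particular supplies the remaining inequality $\dim(\Lsc(X,\NNbar))\leq\dim(X)$.

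Since the covering dimension of a \CuSgp{} is an isomorphism invariant, the only nontrivial input is the identification of $\Cu(C(X))$ with $\Lsc(X,\NNbar)$ in the finite-dimensional case, which is imported from the literature. There is no real obstacle beyond noticing that the finite-dimensional hypothesis needed to apply \cite[Theorem~5.15]{AntPerSan11PullbacksCu} is automatic in the only case that requires an argument. Thus the corollary reduces, essentially by bookkeeping, to \autoref{exa:Lsc} together with \autoref{prp:CommutativeUnital}.
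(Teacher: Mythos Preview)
Your argument has a genuine gap: the isomorphism $\Cu(C(X))\cong\Lsc(X,\NNbar)$ that you invoke does \emph{not} hold for arbitrary finite-dimensional compact metrizable spaces. For instance, if $X=S^2$, then $\Cu(C(S^2))$ contains two distinct compact elements of constant rank one (the class of the trivial projection and the class of the Bott projection), so the rank map $\Cu(C(S^2))\to\Lsc(S^2,\NNbar)$ is not injective. The citation \cite[Theorem~5.15]{AntPerSan11PullbacksCu} in \autoref{exa:Lsc} is only used to deduce that $\Lsc(X,\NNbar)$ is a \CuSgp{} when $X$ is finite-dimensional; it does not assert an isomorphism with $\Cu(C(X))$. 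The isomorphism you want is only known under much stronger hypotheses (essentially $\dim(X)\leq 1$, or for certain low-dimensional spaces after stabilization).

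The paper's proof bypasses this obstacle by observing that, although $\Lsc(X,\NNbar)$ and $\Cu(C(X))$ need not be isomorphic, the former is always a \emph{retract} of the latter: the inclusion $\iota\colon\Lsc(X,\NNbar)\to\Cu(C(X))$ sending $\chi_U$ to the class of a positive function with support $U$ is a \CuMor{}, the rank map $\sigma\colon\Cu(C(X))\to\Lsc(X,\NNbar)$ is a generalized \CuMor{}, and $\sigma\circ\iota=\id$. One then applies \autoref{prp:DimRetract} together with \autoref{prp:CommutativeUnital} to obtain $\dim(\Lsc(X,\NNbar))\leq\dim(\Cu(C(X)))=\dim(X)$. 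This retract argument is the missing idea.
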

\begin{proof}
It is enough to see that $\Lsc (X,\NNbar )$ is a retract of $\Cu (C(X))$, since the inequality '$\geq $' has already been proven in \autoref{exa:Lsc} and the inequality '$\leq $' will follow from \autoref{prp:DimRetract} and \autoref{prp:CommutativeUnital}.

Thus, set $S= \Lsc (X,\NNbar )$ and $T=\Cu (C(X))$. Define $\iota\colon \Lsc (X,\NNbar )\to \Cu (C(X))$ as the unique \CuMor{} mapping the characteristic function $\chi_{U}$ to
the class of a positive function in $C(X)$ with support $U$ for every open subset $U\subset X$.

Also, let $\sigma\colon T\to S$ be the generalized \CuMor{} mapping the class of an element $a\in C(X)\otimes\mathcal{K}$ to its rank function $\sigma (a)\colon X\to\NNbar$, $\sigma (a)(x)= \text{rank} (a(x))$.

It is easy to check that $\sigma\circ \iota =\id_S$, as desired.
\end{proof}

\begin{thm}
\label{prp:Commutative}
Let $X$ be a locally compact, Hausdorff space.
Then
\[
\dim(\Cu(C_0(X))) = \locdim(X).
\]
\end{thm}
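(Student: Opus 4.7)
The plan is to prove the two inequalities separately, reducing each to an earlier result in the excerpt.

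For the lower bound $\locdim(X) \leq \dim(\Cu(C_0(X)))$, I would invoke the formula
\[
\locdim(X) = \sup\{\dim(K) : K\subseteq X \text{ compact}\}
\]
recorded just before \autoref{pgr:SH}. Given a compact subset $K\subseteq X$, local compactness of $X$ combined with Tietze's extension theorem and a Urysohn bump function supported on a relatively compact open neighborhood of $K$ shows that the restriction map $C_0(X)\to C(K)$ is surjective, so $C(K)$ is a quotient of $C_0(X)$. Applying \autoref{prp:PermanenceDimCu} and then \autoref{prp:CommutativeUnital} yields
\[
\dim(K) = \dim(\Cu(C(K))) \leq \dim(\Cu(C_0(X))).
\]
Taking the supremum over all compact $K\subseteq X$ gives the desired inequality.

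For the upper bound $\dim(\Cu(C_0(X))) \leq \locdim(X)$, the shortest route is via the nuclear dimension. The algebra $C_0(X)$ is commutative, hence $1$-homogeneous, and its primitive ideal space is canonically homeomorphic to $X$ via Gelfand duality. Combining \autoref{prp:Nuclear_bound}, \autoref{prp:dimnucSH}, and the definition of $\topdim$ for homogeneous algebras recalled in \autoref{pgr:SH}, we obtain
\[
\dim(\Cu(C_0(X))) \leq \dimnuc(C_0(X)) = \topdim(C_0(X)) = \locdim(\Prim(C_0(X))) = \locdim(X).
\]

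Neither direction involves a genuinely new idea; the statement is essentially a bookkeeping exercise combining \autoref{prp:CommutativeUnital}, \autoref{prp:PermanenceDimCu}, \autoref{prp:Nuclear_bound}, and \autoref{prp:dimnucSH} with the relationship between $\locdim$ and $\dim$ for locally compact Hausdorff spaces. The only point requiring mild care is the surjectivity $C_0(X)\twoheadrightarrow C(K)$ for arbitrary (possibly non-second-countable) locally compact Hausdorff $X$, but this is a classical consequence of Tietze and Urysohn and is not a real obstacle.
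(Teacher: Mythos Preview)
Your argument is correct. The lower bound is handled exactly as in the paper: for compact $K\subseteq X$, the restriction $C_0(X)\to C(K)$ is surjective, so \autoref{prp:PermanenceDimCu} and \autoref{prp:CommutativeUnital} give $\dim(K)\leq\dim(\Cu(C_0(X)))$, and one takes the supremum over~$K$.

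For the upper bound you take a slightly different route from the paper. The paper observes that $C_0(X)$ is an ideal in $C(\alpha X)$ for the one-point compactification $\alpha X$, and then uses \autoref{prp:PermanenceDimCu} together with \autoref{prp:CommutativeUnital} and the equality $\dim(\alpha X)=\locdim(X)$ to obtain $\dim(\Cu(C_0(X)))\leq\dim(\Cu(C(\alpha X)))=\locdim(X)$. You instead go through the nuclear dimension, invoking \autoref{prp:Nuclear_bound} and \autoref{prp:dimnucSH} to get $\dim(\Cu(C_0(X)))\leq\dimnuc(C_0(X))=\topdim(C_0(X))=\locdim(X)$. Both arguments are valid; yours uses the heavier subhomogeneous machinery of \autoref{prp:dimnucSH} (though that result is already implicitly present via \autoref{prp:CommutativeUnital}), while the paper's compactification trick stays closer to the commutative unital case and the permanence properties. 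Neither route offers a real advantage over the other here.
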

\begin{proof}
Let $K\subseteq X$ be a compact subset.
Then $C(K)$ is a quotient of $C_0(X)$.
Using \autoref{prp:CommutativeUnital} at the first step and \autoref{prp:PermanenceDimCu} at the second step, we get
\[
\dim(K) = \dim(\Cu(C(K))) \leq \dim(\Cu(C_0(X))).
\]
It follows that $\locdim(X)\leq\dim(\Cu(C_0(X)))$.

Conversely, we use that $C_0(X)$ is an ideal in $C(\alpha X)$.
Applying \autoref{prp:PermanenceDimCu} at the first step, and using \autoref{prp:CommutativeUnital} and $\dim(\alpha X)=\locdim(X)$ at the second step, we get
\[
\dim(\Cu(C_0(X)))
\leq \dim(\Cu(C(\alpha X)))
= \locdim(X).
\]
This show the converse inequality and finishes the proof.
\end{proof}

\begin{lma}
\label{prp:H}
Let $A$ be a homogeneous \ca.
Then
\[
\dimnuc(A) \leq \dim(\Cu(A)).
\]
\end{lma}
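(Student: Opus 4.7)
The plan is to invoke \autoref{prp:dimnucSH} to restate the desired inequality as $\locdim(\Prim(A))\leq\dim(\Cu(A))$, and then to exploit local triviality of the associated $M_d(\CC)$-bundle in order to reduce, locally over $\Prim(A)$, to the already established commutative case.

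Set $X:=\Prim(A)$ and $n:=\dim(\Cu(A))$, which we may assume to be finite. By \autoref{prp:dimnucSH} together with the definition of the topological dimension of a homogeneous \ca{} recalled in \autoref{pgr:SH}, we have $\dimnuc(A)=\topdim(A)=\locdim(X)$, so it suffices to show $\locdim(X)\leq n$. By the characterization of local dimension recalled in the excerpt, this amounts to producing, for each point $p\in X$, a compact neighborhood $K\subseteq X$ of $p$ with $\dim(K)\leq n$.

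To build such a $K$, I would use that $A$ is the algebra of sections vanishing at infinity of a locally trivial $M_d(\CC)$-bundle over~$X$. Thus each $p\in X$ admits an open neighborhood $U$ on which the bundle trivializes, and since $X$ is locally compact Hausdorff one may shrink to a compact neighborhood $K\subseteq U$ of $p$. The restriction of the bundle to $K$ is then trivial, so $A|_K\cong C(K)\otimes M_d(\CC)$. Moreover, $K$ being closed in $X$, the restriction $A|_K$ is canonically a quotient of $A$ (by the ideal of sections vanishing on $K$).

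Finally, I would chain three observations. First, by \autoref{prp:PermanenceDimCu}, $\dim(\Cu(A|_K))\leq\dim(\Cu(A))=n$. Second, since $M_d(\CC)\otimes\KK\cong\KK$ and the Cuntz semigroup is invariant under stabilization,
\[
\Cu(A|_K)\cong\Cu(C(K)\otimes M_d(\CC))\cong\Cu(C(K)).
\]
Third, by \autoref{prp:CommutativeUnital}, $\dim(\Cu(C(K)))=\dim(K)$. Combining these gives $\dim(K)\leq n$, and taking the infimum over neighborhoods of each point $p$ yields $\locdim(X)\leq n$, as desired.

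I do not expect a genuine obstacle in this argument: the only subtle point is arranging $K$ to be simultaneously compact, a neighborhood of $p$, and contained in an open trivializing set for the bundle, which is immediate from local compactness of $X$ together with local triviality. The rest is a straightforward concatenation of the permanence properties already proved and the commutative computation in \autoref{prp:CommutativeUnital}.
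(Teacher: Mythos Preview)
Your proof is correct and follows essentially the same route as the paper's: reduce to bounding $\locdim(\Prim(A))$, pick a compact neighborhood $K$ of each point inside a trivializing chart, identify the quotient $A|_K$ with $C(K)\otimes M_d$, and then combine permanence under quotients, Morita invariance of $\Cu$, and the commutative computation. The only cosmetic differences are that the paper cites \autoref{prp:CommutativeEstimate} (the bare inequality $\dim(K)\leq\dim(\Cu(C(K)))$) rather than the stronger \autoref{prp:CommutativeUnital}, and it leaves the identification $\dimnuc(A)=\topdim(A)$ implicit rather than invoking \autoref{prp:dimnucSH} explicitly.
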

\begin{proof}
Let $d\geq 1$ be such that $A$ is $d$-homogeneous.
Set $X:=\Prim(A)$, which is locally compact and Hausdorff.
Then $\topdim(A)=\locdim(X)$, and we need to show that $\locdim(X)\leq\dim(\Cu(A))$.

Let $x\in X$.
Since $A$ is the algebra of sections vanishing at infinity of a locally trivial $M_d(\CC)$-bundle over $X$, there exists a compact neighborhood $Y$ of $x$ over which the bundle is trivial.
Let $I\subseteq A$ be the ideal of all sections in $A$ that vanish on $X\setminus Y$.
Then $A/I$ is the algebra of sections of the trivial $M_d(\CC)$-bundle over $Y$, and so $A/I\cong C(Y)\otimes M_d$.
Using \autoref{prp:CommutativeEstimate} at the first step, using that $C(Y)$ and $C(Y)\otimes M_d$ have isomorphic Cuntz semigroups at the second step, and using \autoref{prp:PermanenceDimCu} at the last step, we get
\[
\dim(Y) \leq \dim(\Cu(C(Y))) = \dim(\Cu(C(Y)\otimes M_d)) \leq \dim(\Cu(A)).
\]
Thus, every point in $X$ has a closed neighborhood of dimension at most $\dim(\Cu(A))$, whence $\locdim(X)\leq\dim(\Cu(A))$, as desired.
\end{proof}

\begin{thm}
\label{prp:SH}
Let $A$ be a subhomogeneous \ca.
Then
\[
\dim(\Cu(A)) = \dimnuc(A) = \dr(A) = \topdim(A).
\]
\end{thm}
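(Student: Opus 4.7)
The plan is to combine the theorems already proved in the section. By \autoref{prp:Nuclear_bound} we have $\dim(\Cu(A)) \leq \dimnuc(A)$ unconditionally, and by \autoref{prp:dimnucSH} the three quantities $\dimnuc(A)$, $\dr(A)$ and $\topdim(A)$ coincide for any subhomogeneous $A$. Hence the only missing piece is the lower bound $\topdim(A) \leq \dim(\Cu(A))$, which I would establish by reducing to the homogeneous case via the canonical ideal-quotient filtration from \autoref{pgr:SH}.

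More precisely, let $d\geq 1$ be such that $A$ is $d$-subhomogeneous, and let $A_1,\ldots,A_d$ be the canonical $k$-homogeneous ideal-quotients of $A$. Each $A_k$ is of the form $I/J$ for ideals $J\subseteq I\subseteq A$, so by applying \autoref{prp:PermanenceDimCu} twice (once for ideals and once for quotients) I obtain
\[
\dim(\Cu(A_k)) \leq \dim(\Cu(A)) \quad \text{for every } k=1,\ldots,d.
\]
Since $A_k$ is homogeneous, \autoref{prp:H} gives $\dimnuc(A_k) \leq \dim(\Cu(A_k))$, and \autoref{prp:dimnucSH} (applied to the subhomogeneous algebra $A_k$) yields $\topdim(A_k) = \dimnuc(A_k)$. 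Chaining these together I get $\topdim(A_k) \leq \dim(\Cu(A))$ for each $k$, and taking the maximum over $k=1,\ldots,d$ gives $\topdim(A) = \max_k \topdim(A_k) \leq \dim(\Cu(A))$ by the definition of topological dimension for subhomogeneous algebras.

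There is no real obstacle: all of the ingredients have been set up in the preceding results. The main conceptual point is that \autoref{prp:H} only handles the homogeneous case directly, so one must exploit that $\topdim$ for a subhomogeneous algebra is computed as a maximum over the homogeneous ideal-quotients (rather than, say, by induction through the extensions). This makes the reduction immediate once one notices that $\dim(\Cu(-))$ is monotone under ideals and quotients via \autoref{prp:PermanenceDimCu}, so that the bound on $\Cu(A)$ transfers down to every homogeneous layer $A_k$ without needing an additivity statement for $\dim(\Cu(-))$ across extensions.
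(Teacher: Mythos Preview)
Your proof is correct and follows essentially the same approach as the paper's proof. Both arguments reduce the missing inequality $\topdim(A)\leq\dim(\Cu(A))$ to the homogeneous ideal-quotients $A_k$ via \autoref{prp:PermanenceDimCu}, invoke \autoref{prp:H} on each $A_k$, and then take the maximum over $k$; the only cosmetic difference is that you spell out the identification $\topdim(A_k)=\dimnuc(A_k)$ explicitly via \autoref{prp:dimnucSH}, whereas the paper absorbs this into its citation of \autoref{prp:H}.
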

\begin{proof}
The second and third equalities are shown in \autoref{prp:dimnucSH}.
By \autoref{prp:Nuclear_bound}, the inequality $\dim(\Cu(A)) \leq\dimnuc(A)$ holds in general.
It remains to verify that $\topdim(A)\leq\dim(\Cu(A))$.

For each $k\geq 1$, let $A_k$ be the canonical $k$-homogeneous ideal-quotient of $A$ as in \autoref{pgr:SH}.
Using \autoref{prp:H} at the first step, and using \autoref{prp:PermanenceDimCu} at the second step, we get 
\[
\topdim(A_k)
\leq \dim(\Cu( A_k ))
\leq \dim(\Cu(A)).
\]
Consequently,
\[
\topdim(A) = \max_{k\geq 1} \topdim(A_k)
\leq \dim(\Cu(A)),
\]
as desired.
\end{proof}

\begin{exa}
\label{exa:DimSmallerDimNuc}
There are many examples showing that \autoref{prp:SH} does not hold for all \ca{s}.
In \autoref{prp:CharDim0UnitalSR1}, we will show that every \ca{} $A$ of real rank zero satisfies $\dim(\Cu(A))=0$.
On the other hand, a separable \ca{} $A$ satisfies $\dimnuc(A)=0$ if and only if $A$ is an AF-algebra;
see \cite[Remarks~2.2(iii)]{WinZac10NuclDim}.
Thus, every separable \ca{} $A$ of real rank zero that is not an AF-algebra is an example where $\dim(\Cu(A))$ is strictly smaller than $\dimnuc(A)$.
More extremely, every non-nuclear \ca{} $A$ of real rank zero, such as $\Bdd(\ell^2(\NN))$, satisfies $\dim(\Cu(A))=0$ while $\dimnuc(A)=\infty$.
Another example is the irrational rotation algebra $A_\theta$, which satisfies $\dim(\Cu(A_\theta))=0$ while $\dimnuc(A_\theta)=1$.
\end{exa}

\section{Algebraic, zero-dimensional Cuntz semigroups}
\label{sec:algebraic}

In this section we begin our systematic study of zero-dimensional \CuSgp{s}.
After giving a useful characterization of zero-dimensionality (\autoref{prp:dim0_reduction}), we provide a sufficient criterion: A \CuSgp{} is zero-dimensional whenever it contains a sup-dense subsemigroup that satisfies the Riesz decomposition property with respect to the pre-order induced by the way-below relation;
see \autoref{prp:denseRiesz}.
We deduce that the Cuntz semigroup of every \ca{} of real rank zero is zero-dimensional; 
and conversely, every unital \ca{} of stable rank one and with zero-dimensional Cuntz semigroup has real rank zero;
\autoref{prp:CharDim0UnitalSR1}.

We also show that every weakly cancellative, zero-dimensional \CuSgp{} satisfying \axiomO{5} contains a largest algebraic ideal, which contains all compact elements;
see \autoref{prp:LargestAlgebraicIdeal}.
In \autoref{sec:simple}, we study certain zero-dimensional \CuSgp{s} that contain no compact elements.

\begin{lma}
\label{prp:dim0_reduction}
Let $S$ be a \CuSgp.
Then $\dim(S)=0$ if and only if, whenever $x'\ll x\ll y_{1}+y_{2}$ in $S$, there exist $z_{1},z_2\in S$ such that
\[
z_1\ll y_{1}, \quad
z_{2}\ll y_{2}, \andSep 
x'\ll z_{1}+z_{2}\ll x.
\]
\end{lma}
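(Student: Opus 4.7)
The forward implication is immediate: given $x'\ll x\ll y_1+y_2$, one just specializes \autoref{dfn:dim} with $n=0$ and $r=2$ to produce the required $z_1$, $z_2$.

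For the reverse implication, my plan is to assume the two-summand property and prove by induction on $r\geq 1$ that for every $x'\ll x\ll y_1+\ldots+y_r$ there exist $z_1,\ldots,z_r$ with $z_j\ll y_j$, $x'\ll z_1+\ldots+z_r$, and $z_1+\ldots+z_r\ll x$; this will verify $\dim(S)\leq 0$ in the sense of \autoref{dfn:dim}. The base case $r=1$ is handled by \axiomO{2}: interpolate $x'\ll z_1\ll x$ and observe that $z_1\ll y_1$ follows from $x\ll y_1$. The case $r=2$ is exactly the hypothesis.

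For the inductive step $r\geq 3$, I would group the last $r-1$ summands and apply the two-summand hypothesis to $x'\ll x\ll y_1+(y_2+\ldots+y_r)$ to obtain $z_1\ll y_1$ and $w\ll y_2+\ldots+y_r$ with $x'\ll z_1+w\ll x$. To make the inductive hypothesis applicable to $w$, I need a pair of approximants $w^*\ll w^{**}\ll w$ satisfying $x'\ll z_1+w^*$. The element $w^{**}$ exists by \axiomO{2}; the existence of $w^*$ uses the standard Cu-theoretic fact that if $x'\ll a+b$ then there are $a''\ll a$, $b''\ll b$ with $x'\ll a''+b''$ (proved by writing $a,b$ as suprema of $\ll$-increasing sequences and combining \axiomO{4} with \axiomO{3}). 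Applying the inductive hypothesis to $w^*\ll w^{**}\ll y_2+\ldots+y_r$ produces $z_2,\ldots,z_r$ with $z_j\ll y_j$ and $w^*\ll z_2+\ldots+z_r\ll w^{**}$.

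To close out, I would verify the three required conditions: $z_j\ll y_j$ holds by construction; from $w^*\leq z_2+\ldots+z_r$ and $x'\ll z_1+w^*$ we conclude $x'\ll z_1+z_2+\ldots+z_r$; and from $z_2+\ldots+z_r\leq w^{**}\leq w$ together with $z_1+w\ll x$ we conclude $z_1+z_2+\ldots+z_r\ll x$. I do not anticipate a serious obstacle here; the only subtle point is the auxiliary approximation principle above, which is a well-known consequence of axioms \axiomO{2}--\axiomO{4} and is the reason the inductive bookkeeping fits together cleanly.
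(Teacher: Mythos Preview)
Your proof is correct and follows essentially the same approach as the paper: induction on $r$, peeling off one summand via the two-summand hypothesis and then applying the inductive hypothesis to the remaining $r-1$ summands. The paper groups as $(y_1+\ldots+y_{r-1})+y_r$ rather than your $y_1+(y_2+\ldots+y_r)$, and it uses only a single approximant $u_1'\ll u_1$ with $x'\ll u_1'+u_2$ (your intermediate $w^{**}$ is harmless but unnecessary, since $w^*\ll w\ll y_2+\ldots+y_r$ already has the right shape for the induction), but these are cosmetic differences.
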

\begin{proof}
The forward implication is clear, so we are left to prove the converse.
Given $r\geq 1$ and $x'\ll x\ll y_{1}+\ldots+y_{r}$ in $S$, we need to find $z_1,\ldots,z_r\in S$ such that
\[
z_j \ll y_j \ \text{ for } j=1,\ldots,r, 
\andSep
x'\ll z_{1}+\ldots+z_{r}\ll x.
\]
We prove this by induction on $r$.
The case $r=1$ is clear and the case $r=2$ holds by assumption.
 
Thus, let $r>2$ and assume that the result holds for $r-1$.
Given $x'\ll x\ll y_{1}+\ldots+y_{r}$, apply the assumption to
\[
x'\ll x \ll (y_{1}+\ldots+y_{r-1})+y_r
\]
to obtain $u_1,u_2\in S$ such that
\[
u_1 \ll y_{1}+\ldots+y_{r-1}, \quad
u_2 \ll y_r, \andSep
x' \ll u_1+u_2 \ll x.
\]

Choose $u_1'$ such that
$
u_1'\ll u_1$ and $x'\ll u_1'+u_2$. Applying the induction hypothesis to
\[
u'_{1}\ll u_{1}\ll y_{1}+\ldots+y_{r-1},
\]
we obtain $z_1,\ldots,z_{r-1}\in S$ such that
\[
z_j \ll y_j \ \text{ for } j=1,\ldots,r-1, 
\andSep
u_1'\ll z_{1}+\ldots+z_{r-1}\ll u_1.
\]
Set $z_r:=u_2$.
Then $z_1,\ldots,z_r$ have the desired properties.
\end{proof}

\begin{rmk}
It follows from \autoref{prp:dim0_reduction} that every zero-dimensional \CuSgp{} satisfies \axiomO{6}.
The converse does not hold, that is, zero-dimensionality is strictly stronger than \axiomO{6}.
For example, the Cuntz semigroup of the Jiang-Su algebra satisfies \axiomO{6} but is not zero-dimensional;
see \autoref{exa:Z}.
\end{rmk}

Recall that a semigroup $S$ with a pre-order $\prec$ is said to satisfy the \emph{Riesz decomposition property} if whenever $x,y,z\in S$ satisfy $x\prec y+z$, then there exist $e,f\in S$ such that $x=e+f$, $e\prec y$ and $f\prec z$.

\begin{prp}
\label{prp:denseRiesz}
Let $S$ be a \CuSgp{}, and let $D\subseteq S$ be a sup-dense subsemigroup such that $D$ satisfies the Riesz decomposition property for the pre-order induced by $\ll$.
Then $\dim(S)=0$.
\end{prp}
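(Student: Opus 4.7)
The plan is first to reduce to the two-summand case via \autoref{prp:dim0_reduction}, and then to transport the problem into $D$, where Riesz decomposition for $\ll$ is available.

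Given $x' \ll x \ll y_1+y_2$ in $S$, I will first manufacture elements of $D$ to which the Riesz property can be applied. Writing each $y_j$ as the supremum of a $\ll$-increasing sequence and using \axiomO{3}--\axiomO{4}, I obtain $y_1'', y_2'' \in S$ with $y_j'' \ll y_j$ and $x \ll y_1''+y_2''$. By sup-density of $D$ I can then pick $a_j \in D$ with $y_j'' \leq a_j \ll y_j$, and since $D$ is a subsemigroup, $a_1+a_2 \in D$. On the other side, I interpolate $x' \ll x_1 \ll x_2 \ll x$ by \axiomO{2} and use sup-density again to pick $d \in D$ with $x_1 \leq d \ll x_2$. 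Chaining inequalities yields $d \ll a_1+a_2$, with all three elements in $D$.

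Now the Riesz decomposition property applied inside $D$ to the $\ll$-relation $d \ll a_1+a_2$ produces $d_1, d_2 \in D$ with $d = d_1+d_2$, $d_1 \ll a_1$, and $d_2 \ll a_2$. Setting $z_j := d_j$, I get $z_j \ll a_j \ll y_j$, while $z_1+z_2 = d \ll x_2 \ll x$ and $x' \ll x_1 \leq d = z_1+z_2$, so the three conditions of \autoref{prp:dim0_reduction} are verified, giving $\dim(S)=0$.

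I do not expect a genuine obstacle here; the argument is a choose-and-chase through interpolations. The one bookkeeping point that needs care is ensuring the strict relation $d \ll a_1+a_2$ inside $D$ (rather than merely $d \leq a_1+a_2$), since the Riesz property is formulated with respect to the $\ll$-preorder; this is precisely why the extra interpolation $x_1 \ll x_2 \ll x$ is inserted and why the $y_j''$ are chosen strictly below $y_j$ before invoking sup-density.
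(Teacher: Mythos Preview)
Your proof is correct and follows essentially the same route as the paper: reduce to the two-summand case via \autoref{prp:dim0_reduction}, push the data into $D$ using sup-density, and apply Riesz decomposition for $\ll$ there. The paper is slightly leaner, using a single interpolant $\tilde{x}\in D$ with $x'\ll\tilde{x}\ll x$ rather than your double interpolation $x_1\ll x_2$; since $\tilde{x}\ll x\leq a_1+a_2$ already gives the strict relation $\tilde{x}\ll a_1+a_2$, the second layer is not actually needed, but it does no harm.
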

\begin{proof}
To verify the condition in \autoref{prp:dim0_reduction}, let $x'\ll x\ll y_{1}+y_{2}$ in $S$.
Using that $D$ is sup-dense, we find $\tilde{x},\tilde{y}_1 ,\tilde{y}_2\in D$ such that
\[
x' \ll \tilde{x} \ll x \leq \tilde{y}_1+\tilde{y}_2, \quad
\tilde{y}_1\ll y_1, \andSep
\tilde{y}_2\ll y_2.
\]
Then $\tilde{x} \ll \tilde{y}_1+\tilde{y}_2$.
Using that $D$ satisfies the Riesz decomposition property, we obtain $x_1,x_2\in D$ such that
\[
\tilde{x} = x_1+x_2, \quad x_1\ll\tilde{y}_1, \andSep
x_2\ll\tilde{y}_2.
\]
Then $x_1$ and $x_2$ have the desired properties to verify the condition of \autoref{prp:dim0_reduction}.
\end{proof}

Recall that a \CuSgp{} is said to be \emph{algebraic} if its compact elements are sup-dense;
see \cite[Section~5.5]{AntPerThi18TensorProdCu}.

\begin{lma}
\label{prp:AlgebraicIdeal}
Let $S$ be a weakly cancellative \CuSgp{} satisfying \axiomO{5} and $\dim(S)=0$.
Let $c\in S$ be compact.
Then the ideal generated by $c$ is algebraic.
\end{lma}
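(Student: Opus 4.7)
The plan is to verify that the compact elements of $I_c$ are sup-dense in $I_c$. Equivalently, for every $x \in I_c$ and every $a \ll x$ in $S$, I will produce a compact element $z \in S$ satisfying $a \leq z \leq x$; such a $z$ then automatically lies in $I_c$ since $z \leq x \leq \infty c$. The strategy is to apply the two-term form of zero-dimensionality from \autoref{prp:dim0_reduction} to the compact ceiling $Nc$ and then use weak cancellation to extract honest compactness from the resulting decomposition.

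First I would interpolate $a \ll x' \ll x$ and use that $x \in I_c$ means $x \leq \infty c = \sup_n nc$; applying the definition of $\ll$ to this increasing sequence yields $x' \leq Nc$ for some $N$, and $Nc$ is compact because $c$ is. Next I would invoke \axiomO{5} on $a \ll x' \leq Nc$ to obtain an auxiliary element $d$ satisfying $a + d \leq Nc \leq x' + d$. Combining $Nc \leq x' + d$ with the compactness of $Nc$ gives $Nc \ll Nc \ll x' + d$, which is exactly the chain needed to invoke \autoref{prp:dim0_reduction} with the two-term decomposition $y_1 = x'$, $y_2 = d$.

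This produces elements $z_1 \ll x'$ and $z_2 \ll d$ with $Nc \ll z_1 + z_2$ and $z_1 + z_2 \ll Nc$, forcing $z_1 + z_2 = Nc$. Because $S$ is weakly cancellative and $z_1 + z_2$ is compact, applying weak cancellation to $(z_1 + z_2) \ll (z_1 + z_2)$ with cancelling element $z_2$ (respectively $z_1$) shows that each of $z_1$ and $z_2$ is itself compact. Then \axiomO{3} applied to $z_1 \ll z_1$ and $z_2 \ll d$ yields $z_1 + z_2 \ll z_1 + d$, so $a + d \leq Nc = z_1 + z_2 \ll z_1 + d$ gives $a + d \ll z_1 + d$; a second use of weak cancellation promotes this to $a \ll z_1$. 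Hence $z_1$ is a compact element with $a \leq z_1 \leq x' \leq x$, as required.

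I do not foresee a serious obstacle. The key observation is that, in a weakly cancellative \CuSgp, any decomposition of a compact element into two summands automatically yields compact summands, and this is the mechanism that converts the zero-dimensional decomposition of $Nc$ into an honest compact approximation of $a$ below $x$. The only care needed is in correctly setting up \axiomO{5} and verifying the way-below chain $Nc \ll Nc \ll x' + d$, both of which are immediate from the compactness of $Nc$.
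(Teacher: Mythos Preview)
Your proof is correct and follows essentially the same route as the paper's: interpolate, bound by $Nc$, apply \axiomO{5} to produce a complement $d$, decompose $Nc$ via zero-dimensionality, extract compactness of the summands by weak cancellation, and then cancel $d$ to get $a \ll z_1$. The only cosmetic difference is that you invoke \autoref{prp:dim0_reduction} explicitly and use \axiomO{3} to justify $z_1 + z_2 \ll z_1 + d$, whereas the paper uses $z_2 \leq d$ directly; both arrive at the same cancellation step.
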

\begin{proof}
Let $I$ be the ideal generated by $c$.
Note that $x\in S$ belongs to $I$ if and only if $x\leq\infty c$.
To verify that $I$ is algebraic, let $x',x\in I$ satisfy $x'\ll x$.
We need to find a compact element $z$ such that $x'\ll z\ll x$. 

Choose $x''\in S$ such that $x'\ll x''\ll x$.
Then $x''\ll x\leq\infty c$, which allows us to choose $n\in\NN$ such that $x''\leq nc$.
Applying \axiomO{5} to $x'\ll x''\leq nc$, we obtain $y\in S$ such that 
\[
x'+y\leq nc \leq x''+y.
\]
 
Using that $\dim(S)=0$ for $nc\ll nc\ll x''+y$, we obtain $z_1,z_2\in S$ such that
\[
nc = z_1+z_2, \quad z_1\ll x'', \andSep z_2\ll y.
\]

By weak cancellation, $z_1$ and $z_2$ are compact.
We now have
\[
x'+y\ll nc = z_{1}+z_{2} \leq z_1 + y.
\]
Using weak cancellation, we get $x'\ll z_{1}$.
Thus, $z_1$ has the desired properties. 
\end{proof}

\begin{prp}
\label{prp:LargestAlgebraicIdeal}
Let $S$ be a weakly cancellative \CuSgp{} satisfying \axiomO{5} and $\dim(S)=0$.
Then $S$ contains a largest algebraic ideal, which agrees with the ideal generated by all compact elements of $S$.
\end{prp}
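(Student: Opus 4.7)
The plan is to take $J \subseteq S$ to be the ideal generated by the collection of all compact elements of $S$, and show that $J$ is both algebraic and contains every algebraic ideal of $S$. As a preliminary remark, observe that by \axiomO{3} a finite sum of compact elements remains compact; therefore every element of $J$ is bounded above by $\infty c$ for some single compact $c$, and $J$ can be realized as the directed union $J = \bigcup_{c} I_c$, where $c$ ranges over compact elements of $S$ and $I_c$ is the ideal generated by $c$.

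To verify that $J$ is algebraic, I would start with an arbitrary $x \in J$, pick a compact $c$ with $x \in I_c$, and apply \autoref{prp:AlgebraicIdeal} to conclude that $x$ is the supremum of a $\ll$-increasing sequence of elements compact in $I_c$. Since the inclusion $I_c \hookrightarrow S$ is a \CuMor{} and $I_c$ is closed under suprema of increasing sequences, compactness in $I_c$ coincides with compactness in $S$, so the approximating sequence consists of compact elements of $S$; by construction these lie in $J$. Thus the compact elements of $S$ that belong to $J$ are sup-dense in $J$, establishing that $J$ is algebraic.

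For the maximality of $J$, let $I \subseteq S$ be any algebraic ideal. Every $x \in I$ is the supremum of a $\ll$-increasing sequence of elements that are compact in $I$; by the same argument as above, these elements are compact in $S$, so they lie in $J$ by definition of $J$. Since $J$ is closed under suprema of increasing sequences, it follows that $x \in J$, and hence $I \subseteq J$. The only point requiring care is the identification of compactness in an ideal with compactness in the ambient \CuSgp, but this is routine and uses exactly the two properties of ideals mentioned above; no appeal to weak cancellation, \axiomO{5}, or zero-dimensionality is needed for the maximality half, all three being consumed inside \autoref{prp:AlgebraicIdeal}.
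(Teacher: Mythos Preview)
Your maximality argument is fine, but the claim that $J=\bigcup_c I_c$ (equivalently, that every $x\in J$ satisfies $x\leq\infty c$ for a single compact $c$) is false, and your proof that $J$ is algebraic rests on it. The step ``finite sums of compacts are compact, therefore every element of $J$ lies in some $I_c$'' is a non sequitur: the ideal generated by the compacts is also closed under suprema of increasing sequences, and this closure can produce elements outside $\bigcup_c I_c$. Concretely, take $S=\Cu(c_0)\cong\NNbar^{\NN}$, which is weakly cancellative, satisfies \axiomO{5}, and has $\dim(S)=0$ (indeed $c_0$ has real rank zero). Here the compact elements are exactly the finitely supported sequences with finite entries, so each $I_c$ consists of sequences with a fixed finite support; yet $J=S$ because every element is a supremum of such compacts. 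The constant sequence $(1,1,1,\ldots)$ belongs to $J$ but to no $I_c$.

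The repair is minor and keeps your strategy intact. The correct description of $J$ is that $x\in J$ if and only if every $x'\ll x$ lies below some compact element (this is the standard description of the ideal generated by a subset). So, given $x'\ll x$ in $J$, choose $x''$ with $x'\ll x''\ll x$; then $x''\leq c$ for some compact $c$, hence $x',x''\in I_c$ with $x'\ll x''$ there, and \autoref{prp:AlgebraicIdeal} yields a compact $z$ with $x'\leq z\ll x''\ll x$. This is exactly the content of the paper's one-line proof ``follows directly from \autoref{prp:AlgebraicIdeal}''; you simply need to apply the lemma to an element way below $x$ rather than to $x$ itself.
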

\begin{proof}
This follows directly from \autoref{prp:AlgebraicIdeal}.
\end{proof}

\begin{prp}
\label{prp:charAlgDim0}
Let $S$ be a weakly cancellative \CuSgp{} satisfying \axiomO{5}.
Then the following are equivalent:
\begin{enumerate}
\item
We have $\dim(S)=0$, and the set of compact elements of $S$ is full (that is, there is no proper ideal of $S$ containing all compact elements);
\item
$S$ is algebraic and satisfies \axiomO{6}.
\end{enumerate}
\end{prp}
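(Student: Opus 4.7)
The plan is to prove the two implications separately; the forward direction is short, while most of the work lies in the reverse direction. For $(1) \Rightarrow (2)$, note first that $\dim(S)=0$ forces \axiomO{6} by the remark following \autoref{prp:dim0_reduction}. Next, \autoref{prp:LargestAlgebraicIdeal} provides a largest algebraic ideal $J\subseteq S$, which coincides with the ideal generated by the compact elements. The fullness assumption then gives $J=S$, so $S$ is algebraic.

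For $(2) \Rightarrow (1)$, fullness of the compact elements is immediate from algebraicity, since every element is a supremum of compact elements and hence lies in any ideal containing all of them. The strategy to obtain $\dim(S)=0$ is to apply \autoref{prp:denseRiesz} to the subsemigroup $D$ of compact elements together with $0$, which is sup-dense by algebraicity and is a submonoid by \axiomO{3}. Since $\ll$ and $\leq$ coincide on $D$, the required Riesz decomposition reduces to the following claim: for any compact $c, d_1, d_2$ with $c \leq d_1 + d_2$, one has $c = e_1 + e_2$ for some compact $e_i \leq d_i$.

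To prove the claim, I would apply \axiomO{6} to $c \ll c \leq d_1 + d_2$ to obtain (possibly non-compact) $v \leq c, d_1$ and $w \leq c, d_2$ with $c \leq v + w$. Using algebraicity, write $v = \sup_n v_n$ and $w = \sup_n w_n$ as increasing suprema of compact elements $v_n\ll v$ and $w_n\ll w$; since $c$ is compact and $c \leq v + w = \sup_n(v_n + w_n)$, there exists $n$ with $v_n + w_n \geq c$, where $v_n \leq d_1$ and $w_n \leq d_2$ are compact. Then \axiomO{5} applied to $v_n \ll v_n \leq c$ yields $e_2$ with $v_n + e_2 = c$, and I set $e_1 := v_n$.

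The main obstacle is to verify that $e_2$ is compact and lies below $d_2$. For compactness, write $e_2 = \sup_k e_{2,k}$ with $e_{2,k} \ll e_2$; by \axiomO{4} the compact element $c = v_n + e_2 = \sup_k(v_n+e_{2,k})$ is reached at some finite stage, giving $v_n + e_{2,k_0} = c$, and then $v_n + e_2 = c \ll c = v_n + e_{2,k_0}$ together with weak cancellation forces $e_2 \ll e_{2,k_0} \leq e_2$, so $e_2$ is compact. For the bound on $d_2$, compactness of $c$ yields $v_n + e_2 = c \ll v_n + w_n$, and weak cancellation gives $e_2 \ll w_n \leq d_2$. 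With the Riesz decomposition property on $D$ thereby established, \autoref{prp:denseRiesz} delivers $\dim(S) = 0$.
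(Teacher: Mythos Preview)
Your proof is correct and follows the same overall strategy as the paper: for $(1)\Rightarrow(2)$ you invoke the largest algebraic ideal (the paper uses \autoref{prp:AlgebraicIdeal} directly, but this is equivalent), and for $(2)\Rightarrow(1)$ you apply \autoref{prp:denseRiesz} to the submonoid of compact elements after establishing Riesz decomposition on it.

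The one substantive difference is that the paper obtains Riesz decomposition for the compact elements by citing \cite[Corollary~5.5.10]{AntPerThi18TensorProdCu}, whereas you prove it from scratch using \axiomO{6} to get an approximate decomposition, algebraicity to make the pieces compact, \axiomO{5} to force exact equality $v_n+e_2=c$, and weak cancellation to verify compactness of $e_2$ and the bound $e_2\leq d_2$. Your argument is self-contained and makes transparent exactly which axioms are used at each step; the paper's citation is shorter but defers the work elsewhere. Both are valid; your version has the pedagogical advantage of showing that the result needs nothing beyond the hypotheses already in the statement.
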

\begin{proof}
Assuming~(1), it follows from \autoref{prp:AlgebraicIdeal} that $S$ is algebraic.
Further, it is clear that $\dim(S)=0$ implies that $S$ satisfies \axiomO{6}.

Conversely, assuming~(2), set $D:=\{x\in S:x\ll x\}$, the semigroup of compact elements.
By assumption, $D$ is sup-dense.
By \cite[Corollary~5.5.10]{AntPerThi18TensorProdCu}, $D$ satisfies the Riesz decomposition property.
Hence, $\dim(S)=0$ by \autoref{prp:denseRiesz}.

Since $S$ is algebraic, it is clear that compact elements of $S$ are full.
\end{proof}

\begin{thm}
\label{prp:CharDim0UnitalSR1}
If $A$ is a \ca{} of real rank zero, then $\dim(\Cu(A))=0$.
Conversely, if $A$ is a unital \ca{} of stable rank one, then $A$ has real rank zero if (and only if) $\dim(\Cu(A))=0$.
\end{thm}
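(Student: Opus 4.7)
The plan is to prove the two implications separately. For the forward direction I would combine \autoref{prp:denseRiesz} with the classical Riesz decomposition property for Murray--von Neumann equivalence of projections in real rank zero \ca{s}. For the converse I would use \autoref{prp:AlgebraicIdeal} to force $\Cu(A)$ to be algebraic, and then appeal to a known characterization of real rank zero in the stable-rank-one setting.

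For the first implication, assume $A$ has real rank zero; then so does $A\otimes\KK$. Take $D\subseteq\Cu(A)$ to be the sub-semigroup of Cuntz classes of projections in $A\otimes\KK$. Two ingredients are needed to apply \autoref{prp:denseRiesz}. First, $D$ must be sup-dense: given $a\in(A\otimes\KK)_+$ and $\varepsilon>0$, real rank zero applied inside the hereditary sub-\ca{} $\overline{a(A\otimes\KK)a}$ furnishes a projection $p$ with $[(a-\varepsilon)_+]\leq[p]\leq[a]$. Second, $D$ must satisfy Riesz decomposition for the pre-order induced by $\ll$; since every element of $D$ is compact, $\ll$ coincides with $\leq$ on $D$, which on projection classes is just Murray--von Neumann subequivalence, and the Riesz decomposition property for this relation in real rank zero \ca{s} is classical. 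An application of \autoref{prp:denseRiesz} then yields $\dim(\Cu(A))=0$.

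For the converse, suppose $A$ is unital, has stable rank one, and $\dim(\Cu(A))=0$. Stable rank one makes $\Cu(A)$ weakly cancellative, so the hypotheses of \autoref{prp:AlgebraicIdeal} are in force. Since $A$ is unital, $[1_A]$ is a compact element of $\Cu(A)$, and every element of $\Cu(A)=\Cu(A\otimes\KK)$ is dominated by some multiple of $[1_A]$; hence the ideal of $\Cu(A)$ generated by $[1_A]$ is all of $\Cu(A)$. Applying \autoref{prp:AlgebraicIdeal} gives that $\Cu(A)$ is algebraic, meaning that for every $a\in A_+$ and every $\varepsilon>0$ there is a compact element $c\in\Cu(A)$ with $[(a-\varepsilon)_+]\leq c\leq[a]$.

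The main obstacle is the final step: translating algebraicity of $\Cu(A)$ back into real rank zero of $A$. The plan is to exploit stable rank one, which forces every compact element of $\Cu(A)$ to be the Cuntz class of a projection $p\in A\otimes\KK$, and which (by standard manipulations with polar decomposition available under stable rank one) allows this witnessing projection to be realised inside the hereditary sub-\ca{} $\overline{aAa}$. The result is a projection $p\in\overline{aAa}$ satisfying $(a-\varepsilon)_+\precsim p\precsim a$, so that every hereditary sub-\ca{} of $A$ admits an approximate unit of projections, which is the definition of real rank zero. This last conversion from Cuntz-semigroup data back to algebra-level projections is where the stable-rank-one hypothesis is indispensable, and is the most delicate part of the argument.
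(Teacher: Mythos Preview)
Your proposal is correct and follows essentially the same route as the paper. For the forward implication you both invoke \autoref{prp:denseRiesz} with $D$ the projection classes and appeal to Zhang's Riesz decomposition for $V(A)$ in real rank zero; the paper phrases this via the surjection $V(A)\to\Cu(A)_c$, but the content is identical. For the converse, the paper uses \autoref{prp:charAlgDim0} (which is just \autoref{prp:AlgebraicIdeal} repackaged) to conclude that $\Cu(A)$ is algebraic, and then simply cites \cite[Corollary~5]{CowEllIva08CuInv} for the implication ``$\Cu(A)$ algebraic and stable rank one $\Rightarrow$ real rank zero''. What you describe as ``the most delicate part of the argument'' is precisely the content of that corollary, so rather than carrying out the polar-decomposition manipulations by hand you can just invoke it.
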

\begin{proof}
1. Let $A$ be a \ca{} of real rank zero.
It follows that the submonoid $\Cu(A)_c$ of compact elements in $\Cu(A)$ is sup-dense.
With view towards \autoref{prp:denseRiesz}, it suffices to show that $\Cu(A)_c$ satisfies the Riesz decomposition property.

By \cite[Corollary~3.3]{BroPed91CAlgRR0}, $A\otimes\KK$ has real rank zero.
Hence, it follows from \cite[Theorem~1.1]{Zha90RieszDecomp} that the Murray-von Neumann semigroup $V(A)$ of equivalence classes of projections in $A\otimes\KK$ satisfies the Riesz decomposition property.
Given a projection $p\in A\otimes\KK$, we let $[p]_0$ denote its equivalence class in $V(A)$.
Then the map $V(A)\to\Cu(A)_c$, given by $[p]_0\mapsto[p]$, is well-defined, additive and order-preserving.
Using that $A$ has real rank zero, it follows that $\alpha$ is surjective onto $\Cu(A)_c$.
(However, $\alpha$ is not injective in general, and $V(A)$ and $\Cu(A)_c$ need not be isomorphic.)

To verify that $\Cu(A)_c$ satisfies the Riesz decomposition property, let $p,q,r$ be projections in $A\otimes\KK$ such that $[p]\ll[q]+[r]$ in $\Cu(A)$.
Then $[p]_0\leq[q]_0+[r]_0$ in $V(A)$.
Using that $V(A)$ satisfies the Riesz decomposition property, we obtain projections $q'\leq q$ and $r'\leq r$ such that $[p]_0=[q']_0+[r']_0$ in $V(A)$.
It follows that $[p'],[q']\in\Cu(A)_c$ satisfy $[p]=[q']+[r']$, $[q']\ll[q]$ and $[r']\ll[r]$, as desired.

2. Let $A$ be a unital \ca{} of stable rank one and assume $\dim(\Cu(A))=0$.
Since $A$ is unital, the compact elements in $\Cu(A)$ form a full subset.
Thus, by \autoref{prp:charAlgDim0}, $\Cu(A)$ is algebraic.
Now it follow from \cite[Corollary 5]{CowEllIva08CuInv} that~$A$ has real rank zero.
\end{proof}

\begin{cor}
\label{prp:simple_Zstable}
Let $A$ be a separable, simple, $\mathcal{Z}$-stable \ca. 
Then we have $\dim(\Cu(A))\leq 1$.
Moreover, $\dim(\Cu(A))=0$ if and only if $A$ has real rank zero or if $A$ is stably projectionless.
\end{cor}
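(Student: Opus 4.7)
The first clause $\dim(\Cu(A)) \leq 1$ is immediate from \autoref{prp:ZstableCAlg} applied to the isomorphism $A \cong A \otimes \mathcal{Z}$, so the plan focuses entirely on the characterization of zero-dimensionality.

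For the \emph{if} direction, the real rank zero case immediately yields $\dim(\Cu(A)) = 0$ by \autoref{prp:CharDim0UnitalSR1}. For the stably projectionless case, I would exploit that $A$ is nonelementary (being simple and $\mathcal{Z}$-stable) and stably finite (since simple, purely infinite \ca{s} contain nonzero projections), so by R\o rdam $A$ has stable rank one. By Brown--Ciuperca, the nonzero compact elements of $\Cu(A)$ then correspond to nonzero projections in $A\otimes\KK$, of which there are none. Invoking \cite[Proposition~5.3.16]{AntPerThi18TensorProdCu} (as in the proof of \autoref{prp:DimSoftPart}), every nonzero element of $\Cu(A)$ must be soft. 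Since $\mathcal{Z}$-stability endows $\Cu(A)$ with $Z$-multiplication, \cite[Theorem~7.5.4]{AntPerThi18TensorProdCu} upgrades this to $[0,\infty]$-multiplication, and \autoref{prp:RZ_mult}(2) delivers $\dim(\Cu(A))=0$.

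For the \emph{only if} direction, assume $\dim(\Cu(A))=0$ and that $A$ is not stably projectionless. The $\mathcal{Z}$-stable simple dichotomy splits into two cases. If $A$ is purely infinite, then $A$ has real rank zero by Zhang's theorem. If $A$ is stably finite, R\o rdam yields stable rank one, so $\Cu(A)$ is weakly cancellative and satisfies \axiomO{5}. A nonzero projection in $A\otimes\KK$ supplies a nonzero compact element of $\Cu(A)$, which is full by simplicity of $A$. By \autoref{prp:LargestAlgebraicIdeal}, the ideal it generates---which by fullness is $\Cu(A)$ itself---is algebraic, and then \cite[Corollary~5]{CowEllIva08CuInv} (exactly as applied at the end of the proof of \autoref{prp:CharDim0UnitalSR1}) yields real rank zero.

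The only anticipated friction is in verifying the two literature-level implications used silently above: namely that stably projectionless, simple, $\mathcal{Z}$-stable implies stable rank one; and that the algebraic-Cuntz-semigroup criterion of \cite[Corollary~5]{CowEllIva08CuInv} applies also in the separable simple, possibly non-unital, stable-rank-one setting. Both are standard; otherwise, the argument is a direct assembly of \autoref{prp:ZstableCAlg}, \autoref{prp:CharDim0UnitalSR1}, \autoref{prp:RZ_mult}, and \autoref{prp:LargestAlgebraicIdeal}.
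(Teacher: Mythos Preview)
Your proof is correct and follows essentially the same architecture as the paper's: the $\leq 1$ bound from \autoref{prp:ZstableCAlg}, the real-rank-zero and stably-projectionless cases for the \emph{if} direction via \autoref{prp:CharDim0UnitalSR1} and \autoref{prp:RZ_mult}(2) respectively, and the purely-infinite/stably-finite dichotomy for the \emph{only if} direction.

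The one tactical difference worth noting is precisely where you anticipated friction. In the stably-finite, not-stably-projectionless case of the \emph{only if} direction, the paper sidesteps both of your flagged issues by passing to the unital corner $p(A\otimes\KK)p$: this is unital, so R\o rdam's stable-rank-one theorem applies directly, and \autoref{prp:CharDim0UnitalSR1} (which is stated for unital algebras) applies verbatim; real rank zero is then transferred back to $A$ via \cite{BroPed91CAlgRR0}. Your route through \autoref{prp:LargestAlgebraicIdeal} and a non-unital reading of \cite[Corollary~5]{CowEllIva08CuInv} works too, but the corner trick is cleaner. Similarly, in the stably-projectionless \emph{if} case, the paper does not invoke stable rank one at all: \cite{BroCiu09IsoHilbModSF} already gives that compact Cuntz classes come from projections for stably finite $A$, so the absence of nonzero compacts follows immediately, and your detour through R\o rdam there is unnecessary (though harmless).
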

\begin{proof}
It follows from \cite[Theorem~4.1.10]{Ror02Classification} that $A$ is either purely infinite or stably finite.
Thus, we can distinguish three cases:
$A$ is either purely infinite or stably projectionless, or stably finite and not stably projectionless.

The first statement follows from \autoref{prp:ZstableCAlg}.
To show the forward implication of the second statement, assume that $\dim(\Cu(A))=0$.
We need to show that $A$ has real rank zero or is stably projectionless.
First, if $A$ is purely infinite, then $A$ has real rank zero;
see \cite[Proposition~V.3.2.12]{Bla06OpAlgs}.
Second, if $A$ is stably projectionless, then there is nothing to show.
Third, we consider the case that $A$ is stably finite and not stably projectionless.
Let $p\in A\otimes\KK$ be a nonzero projection.
Then $p(A\otimes\KK)p$ is a separable, unital, simple, stably finite, $\mathcal{Z}$-stable \ca{} and therefore has stable rank one by \cite[Theorem~6.7]{Ror04StableRealRankZ}.
Since $A$ and $p(A\otimes\KK)p$ are stably isomorphic, they have isomorphic Cuntz semigroups.
Thus, $\dim(\Cu(p(A\otimes\KK)p))=0$, and we deduce from \autoref{prp:CharDim0UnitalSR1} that $p(A\otimes\KK)p$ has real rank zero.
By \cite[Corollary~2.8 and~3.3]{BroPed91CAlgRR0}, a \ca{} has real rank zero if and only if its stabilization does.
Thus, $A$ has real rank zero.

To show the backward implication of the second statement, assume that $A$ has real rank zero or is stably projectionless.
We need to show that $\dim(\Cu(A))=0$.
If $A$ has real rank zero, this follow from \autoref{prp:CharDim0UnitalSR1}.
Let us consider the case that $A$ is stably projectionless.
Then $\Cu(A)$ contains no nonzero compact elements by \cite{BroCiu09IsoHilbModSF}.
Thus, $\Cu(A)$ is soft and has $Z$-multiplication, which by \cite[Theorem~7.5.4]{AntPerThi18TensorProdCu} implies that $\Cu(A)$ has $[0,\infty]$-multiplication.
Hence, $\dim(\Cu(A))=0$ by \autoref{prp:RZ_mult}.
\end{proof}

\section{Thin boundary and complementable elements}
\label{sec:thin}

In this section, we study soft elements in simple \CuSgp{s} that behave very similar to compact elements:
the elements with thin boundary (\autoref{dfn:ThinBoundary}), and the complementable elements (\autoref{dfn:complementable}).
If $S$ is a simple, stably finite, soft \CuSgp{} satisfying \axiomO{5} and \axiomO{6} (for example, the Cuntz semigroup of a simple, stably projectionless \ca{}; see \autoref{prp:projectionless}), then every element with thin boundary is complementable;
see \autoref{prp:ThinImplComplementable}.
The converse holds if $S$ is also weakly cancellative (for example, the Cuntz semigroup of a simple, stably projectionless \ca{} of stable rank one);
see \autoref{prp:ThinIffComplementable}.

In \autoref{sec:simple}, we will show that zero-dimensionality of certain simple \CuSgp{s} is characterized by sup-denseness of the elements with thin boundary.

\begin{pgr}
\label{pgr:soft}
We say that a simple \CuSgp{} $S$ is \emph{stably finite} if for all $x,z\in S$, we have that $x+z\ll z$ implies $x=0$.
Using that $S$ is simple, one can show that this definition is equivalent to the one given in \cite[Paragraph~5.2.2]{AntPerThi18TensorProdCu}.
We note that every simple, weakly cancellative \CuSgp{} is stably finite.

Let $S$ be a simple, stably finite \CuSgp{} satisfying \axiomO{5}.
Recall that an element $x\in S$ is \emph{compact} if $x\ll x$.
We say that $x\in S$ is \emph{soft} if $x=0$ or if $x\neq 0$ and for every $x'\in S$ satisfying $x'\ll x$ there exists a nonzero $t\in S$ such that $x'+t\ll x$.
(Using \cite[Proposition~5.3.8]{AntPerThi18TensorProdCu}, one sees that this is equivalent to the original definition.)
We say that $S$ is \emph{soft} if every element in $S$ is soft.

We let $S_c$ and $S_{\rm{soft}}$ denote the set of compact and soft elements in $S$, respectively.
We also set $S_{\rm{soft}}^{\times} := S_{\rm{soft}} \setminus\{0\}$.
It is easy to see that $S_c, S_{\rm{soft}}$ and $S_{\rm{soft}}^{\times}$ are submonoids of $S$.
Further, $S_{\rm{soft}}^{\times}$ is absorbing in the sense that $x+y$ belongs to $S_{\rm{soft}}^{\times}$ whenever $x$ or $y$ does;
see \cite[Theorem~5.3.11]{AntPerThi18TensorProdCu}.

By \cite[Proposition~5.3.16]{AntPerThi18TensorProdCu}, every element in $S$ is either compact, or nonzero and soft.
Hence, $S$ can be decomposed as $S=S_{\rm{soft}}^{\times}\sqcup S_{c}$.
\end{pgr}

\begin{prp}
\label{prp:projectionless}
Let $A$ be a simple, stably projectionless \ca.
Then $\Cu(A)$ is a simple, stably finite, soft \CuSgp{} satisfying \axiomO{5} and \axiomO{6}.
\end{prp}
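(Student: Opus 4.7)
The plan is to verify the four asserted properties of $\Cu(A)$ separately, each time invoking known facts from the preliminaries together with the specific hypothesis that $A$ is simple and stably projectionless.

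First, axioms \axiomO{5} and \axiomO{6} come for free: these hold for the Cuntz semigroup of \emph{any} \ca{} by \cite[Proposition~4.6]{AntPerThi18TensorProdCu} and \cite{Rob13Cone}, as already recalled in the preliminaries. Likewise, the simplicity of $\Cu(A)$ as a \CuSgp{} (every nonzero element is full) is a direct translation of the simplicity of $A$, since the ideals of $\Cu(A)$ correspond bijectively to the closed two-sided ideals of $A$ by \cite[Section~5.1]{AntPerThi18TensorProdCu}.

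For stable finiteness, I would argue as follows. Since $A$ is simple and has no nonzero projections even in $A\otimes\KK$, it cannot contain an infinite projection; by the standard trichotomy, every simple \ca{} that contains no infinite projection is stably finite. To translate this into the Cuntz-semigroup language used in \autoref{pgr:soft}, suppose $x,z\in\Cu(A)$ satisfy $x+z\ll z$. Pick positive representatives $a,c\in (A\otimes\KK)_+$ with $x=[a]$ and $z=[c]$, and an approximate representative $c'\in \overline{c(A\otimes\KK)c}$ with $[c']\ll[c]$. The relation $x+z\ll z$ produces a strict inclusion of hereditary subalgebras that would yield, upon passing to a matrix amplification of a hereditary subalgebra of $c(A\otimes\KK)c$, a properly infinite positive element unless $x=0$; since $A$ is stably finite this forces $x=0$. (Alternatively, one can quote that $A$ stably finite implies $\Cu(A)$ stably finite in the paper's sense, which is the content of \cite[Proposition~5.2.4]{AntPerThi18TensorProdCu} or the analogous statement there.)

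Finally, for softness I would use the key reference \cite{BroCiu09IsoHilbModSF}: for any \ca{} $A$, the nonzero compact elements of $\Cu(A)$ are exactly the Cuntz classes of nonzero projections in $A\otimes\KK$. Since $A$ is stably projectionless, $A\otimes\KK$ has no nonzero projections, so $\Cu(A)_c=\{0\}$. By the decomposition $\Cu(A)=\Cu(A)_{\soft}^{\times}\sqcup\Cu(A)_c$ from \autoref{pgr:soft} (which is applicable since \axiomO{5} and stable finiteness are already established), every nonzero element of $\Cu(A)$ lies in $\Cu(A)_{\soft}^{\times}$, and $0$ is soft by definition; hence $\Cu(A)$ is soft. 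The only subtle point is the logical ordering: the decomposition result of \cite[Proposition~5.3.16]{AntPerThi18TensorProdCu} presupposes simplicity, \axiomO{5}, and stable finiteness, which is exactly why I would establish those items first. With these in place the softness conclusion is essentially immediate, and the main work of the proof is really just the verification of stable finiteness.
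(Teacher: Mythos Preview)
Your overall structure matches the paper's: simplicity and \axiomO{5}--\axiomO{6} come for free from general facts, the absence of nonzero compact elements comes from \cite{BroCiu09IsoHilbModSF}, and softness then follows from the decomposition in \cite[Proposition~5.3.16]{AntPerThi18TensorProdCu}. The one place you diverge is the stable-finiteness step, and your route there is both longer and shakier than it needs to be.

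The paper observes directly that a simple \CuSgp{} is stably finite if and only if either $S=\{0\}$ or the element $\infty$ is not compact. Since you have already established (via \cite{BroCiu09IsoHilbModSF}) that $\Cu(A)$ has no nonzero compact elements, $\infty$ in particular is not compact, and stable finiteness is immediate. In other words, the fact you invoke for softness already gives you stable finiteness for free; there is no need to pass through the \ca{}-level notion of stable finiteness at all. By contrast, your primary argument---the sketch about ``a strict inclusion of hereditary subalgebras that would yield \ldots\ a properly infinite positive element''---is not a proof as written: you have not said what the inclusion is, why it is strict, or how a relation $x+z\ll z$ in $\Cu(A)$ actually produces a properly infinite element in $A\otimes\KK$. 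Your fallback citation to a numbered proposition in \cite{AntPerThi18TensorProdCu} is also hedged (``or the analogous statement there''), which signals that you are not sure the reference says what you need. Replace this entire paragraph with the one-line observation about $\infty$ not being compact, and the proof becomes both correct and shorter.
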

\begin{proof}
The Cuntz semigroup $\Cu (A)$ is simple and satisfies \axiomO{5} and \axiomO{6} since it is the Cuntz semigroup of a simple \ca{} (see, for example, \cite[Corollary~5.1.12]{AntPerThi18TensorProdCu}). As $A$ is stably projectionless, $\Cu (A)$ has no nonzero compact elements by \cite{BroCiu09IsoHilbModSF}.

It is easy to check that a simple $\Cu$-semigroup is stably finite if and only if $\infty$ is not compact or if $S$ is zero. Therefore, the Cuntz semigroup of a stably projectionless \ca{} is always stably finite. 

By \cite[Proposition~5.3.16]{AntPerThi18TensorProdCu} we have $\Cu (A)^{\times}=\Cu (A)_{\rm{soft}}^{\times}$ as desired.
\end{proof}

\begin{dfn}
\label{dfn:ThinBoundary}
Let $S$ be a simple \CuSgp.
We say that an element $x\in S$ has \emph{thin boundary} if $x\ll x+t$ for every nonzero $t\in S$.
We let $S_\thin$ denote the set of elements in $S$ with thin boundary.
\end{dfn}

Note that every compact element has thin boundary, but the converse is not true:
In $[0,\infty]$ every element has thin boundary, but only $0$ is compact.

\begin{exa}
\label{exa:LAff}
Let $K$ be a metrizable, compact, convex set.
Let $\LAff(K)_{++}$ denote the set of lower semicontinuous, affine functions $K\to(0,\infty]$.
Equipped with pointwise order and addition, $S:=\LAff(K)_{++}\cup\{0\}$ is a simple \CuSgp;
see, for example, \cite[Proposition~3.9]{Thi20RksOps}.
By \cite[Lemma~3.6]{Thi20RksOps}, $f,g\in S$ satisfy $f\ll g$ if and only if there exist $\varepsilon>0$ and a continuous, finite-valued function $h\in S$ such that $f+\varepsilon\leq h\leq g$.
Using this, we deduce that $f\in S$ has thin boundary in the sense of \autoref{dfn:ThinBoundary} if and only if $f$ is continuous and finite-valued.
\end{exa}

\begin{exa}
\label{exa:ContinuousRank}
Let $S$ be a countably based, simple, stably finite \CuSgp{} containing a nonzero, compact element $u\in S$.
Let $K$ denote the metrizable, compact, convex set of functionals $\lambda\colon S\to[0,\infty]$ satisfying $\lambda(u)=1$.
The rank of an element $x\in S$ is the function $\widehat{x}\colon K\to[0,\infty]$ given by $\widehat{x}(\lambda):=\lambda(x)$.

Given $x\in S$, its rank $\widehat{x}$ belongs to the simple \CuSgp{} $\LAff(K)_{++}\cup\{0\}$.
If $x$ has thin boundary, then $\widehat{x}$ is continuous (using for example \cite[Lemma~2.2.5]{Rob13Cone}) and finite-valued, and it follows from \autoref{exa:LAff} that $\widehat{x}\in \LAff(K)_{++}\cup\{0\}$ has thin boundary as well.
\end{exa}

\begin{rmk}
\label{rmk:ThinBoundary}
The definition of `thin boundary' is inspired by the notion of `small boundary' in dynamical systems.
Let $T\colon X\to X$ be a minimal homeomorphism on a compact, metrizable space $X$.
We let $M_T(X)$ denote the set of $T$-invariant probability measures on $X$, which is a non-empty, metrizable Choquet simplex.

The rank of an open subset $U\subseteq X$ is the map $\widehat{U}\colon M_T(X)\to[0,1]$ given by $\widehat{U}(\mu):=\mu(U)$.
Then $\widehat{U}$ belongs to the simple \CuSgp{} $\LAff(M_T(X))_{++}\cup\{0\}$.

An open set $U\subseteq X$ is said to have `small boundary' if $\mu(\partial U)=0$ for every $\mu\in M_T(X)$;
see \cite[Section~3]{Lin99MeanDim}.
If $U$ has small boundary, then $\widehat{U}$ is continuous.
Indeed, consider the open set $V:=X\setminus\overline{U}$.
Then $\widehat{U}$ and $\widehat{V}$ are lower semicontinuous functions, which by assumption add to the constant function $1$, which implies that they are continuous.
Thus, just as `thin boundary' implies continuous rank (\autoref{exa:ContinuousRank}), so does `small boundary'.

Further, to the dynamical system $(X,T)$ one can associate a dynamical version of the Cuntz semigroup based on the dynamical notion of comparison defined in \cite[Section~3]{Ker20DimCompAlmFin}.
One can then show that an open set $U\subseteq X$ has small boundary whenever it has `thin boundary' in the sense that $[U]\ll[U]+[V]$ in the dynamical Cuntz semigroup for every nonempty open set $V\subseteq X$.

Thus, in the dynamical setting, `thin boundary' implies `small boundary', and we can think of `small boundary' as the measurable (or tracial) version of `thin boundary'.
\end{rmk}

We will repeatedly use the following result.

\begin{lma}
\label{prp:smallElements}
Let $S$ be a simple, nonelementary \CuSgp{} satisfying \axiomO{5} and \axiomO{6}.
Let $u_0,u_1\in S$ be nonzero.
Then there exists a nonzero $w\in S$ such that $2w\ll u_0,u_1$.
\end{lma}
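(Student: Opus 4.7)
The plan is to first produce a single nonzero $v' \in S$ with $v' \ll u_0$ and $v' \ll u_1$ (a ``common way-below piece''), and then halve $v'$ using nonelementarity.

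\textbf{Step 1: Common way-below piece.} By \axiomO{2}, choose nonzero $u_0' \ll u_0$ and nonzero $u_1' \ll u_1$. Simplicity gives $u_0 \leq \sup_n n u_1'$, so from $u_0' \ll u_0$ we get $u_0' \leq N u_1'$ for some $N \geq 1$. Iterating \axiomO{2}, prepare a chain $u_0' = u_0^{(0)} \gg u_0^{(1)} \gg \cdots \gg u_0^{(N)}$ of nonzero $\ll$-predecessors. For $j = 0, 1, \ldots, N-1$ in turn, assume $u_0^{(j)} \leq (N-j) u_1' = u_1' + (N-j-1) u_1'$ and apply \axiomO{6} to $u_0^{(j+1)} \ll u_0^{(j)}$ to obtain $v_j, w_j$ with $v_j \leq u_0^{(j)}, u_1'$, $w_j \leq u_0^{(j)}, (N-j-1) u_1'$, and $u_0^{(j+1)} \leq v_j + w_j$. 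If $v_j = 0$ then $u_0^{(j+1)} \leq (N-j-1) u_1'$, which feeds into step $j+1$. If all $v_j$ were zero, at $j = N-1$ we would have $w_{N-1} \leq 0$ and hence $u_0^{(N)} \leq v_{N-1} = 0$, contradicting nonzeroness of $u_0^{(N)}$. Thus some $v_k$ is nonzero with $v_k \leq u_0'$ and $v_k \leq u_1'$. Picking a nonzero $v' \ll v_k$ and using $\ll$-transitivity with $v_k \leq u_0' \ll u_0$ and $v_k \leq u_1' \ll u_1$, we obtain $v' \ll u_0$ and $v' \ll u_1$.

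\textbf{Step 2: Halving $v'$.} It suffices to find a nonzero $w$ with $2w \leq v'$: then $2w \leq v' \ll u_i$ yields $2w \ll u_i$ for $i\in\{0,1\}$. Since $S$ is nonelementary, $v'$ is not a minimal nonzero element, so there exists $c$ with $0 \neq c \leq v'$ and $c \neq v'$. Pick nonzero $c' \ll c$ and apply \axiomO{5} (with $y = 0$) to $c \leq v'$: there exists $d$ with $c' + d \leq v' \leq c + d$, and necessarily $d \neq 0$ (else $v' \leq c$, forcing $v' = c$). Running the common-piece extraction of Step~1 on the nonzero pair $(c', d)$ in place of $(u_0', u_1')$ produces a nonzero $w \leq c', d$, whence $2w \leq c' + d \leq v'$, as required.

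The main obstacle is the iterative \axiomO{6} argument in Step~1: one has to prepare a chain of $N+1$ nonzero $\ll$-predecessors of $u_0'$ \emph{before} starting the induction, so that nonzeroness at the bottom of the chain forces some $v_j$ to be nonzero, and one must carefully track the bound $u_0^{(j)} \leq (N-j) u_1'$ through the iteration. Once this common-piece lemma is in hand, the halving in Step~2 is a clean reapplication of the same extraction to the pair $(c', d)$ produced by \axiomO{5}, with nonelementarity entering only to guarantee a proper subelement $c \lneq v'$.
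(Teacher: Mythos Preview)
Your proof is correct and follows essentially the same two-step strategy as the paper: first extract a common nonzero piece below $u_0$ and $u_1$ via simplicity and \axiomO{6}, then halve it using nonelementarity together with \axiomO{5} and a second application of the common-piece extraction. The only difference is cosmetic: the paper applies the $r$-fold version of \axiomO{6} in one stroke (obtaining $z_1,\ldots,z_n\ll u_0',u_1$ with $u_0''\ll\sum z_j$ and picking a nonzero $z_j$), whereas you spell out the equivalent iterated $2$-fold argument with a prepared chain of nonzero $\ll$-predecessors; your Step~2 is essentially verbatim the paper's.
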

\begin{proof}
This follows by combining \cite[Lemma~5.1.18]{AntPerThi18TensorProdCu} and \cite[Proposition~5.2.1]{Rob13Cone}.
For the convenience of the reader, we include the simple argument.

First, choose nonzero elements $u_0'',u_0'\in S$ such that $u_0''\ll u_0'\ll u_0$.
Since $S$ is simple and $u_1\neq 0$, we have $u_0'\ll u_0 \leq \infty=\infty u_1$, which allows us to choose $n\geq 1$ such that
$
u_0'\leq nu_1
$.

Applying \axiomO{6} to $u_0''\ll u_0'\leq u_1+\ldots_n+u_1$, we obtain $z_1,\ldots,z_n\in S$ such that
\[
u_0'' \ll z_1+\ldots+z_n, \andSep z_1,\ldots,z_n\ll u_0',u_1.
\]
Since $u_0''$ is nonzero, there is $j\in\{1,\ldots,n\}$ such that $v:=z_j$ is nonzero.
Then $v\ll u_0,u_1$.

Since $S$ is nonelementary, $v$ is not a minimal nonzero element.
Thus, we can choose a nonzero $v'\in S$ with $v'\leq v$ and $v'\neq v$.
Choose a nonzero $v''\in S$ with $v''\ll v'$.
Applying \axiomO{5} to $v''\ll v'\leq v$, we obtain $c\in S$ such that
\[
v''+c\leq v\leq v'+c.
\]
Since $v'\neq v$, we have $c\neq 0$.
Applying the first part of the argument to the nonzero elements $\tilde{u}_0 = v''$ and $\tilde{u}_1= c$, we obtain $w\in S$ such that $0\neq w\ll v'',c$.
Then $w$ has the desired properties.
\end{proof}

\begin{lma}
\label{prp:ThinMonoid}
Let $S$ be a simple \CuSgp{} satisfying \axiomO{5} and \axiomO{6}.
Then $S_{\rm{tb}}$ is a submonoid.
\end{lma}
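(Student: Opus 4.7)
The plan is to verify the two submonoid axioms for $S_\thin$: that it contains $0$ and is closed under addition. The first is immediate, since $0 \ll t = 0+t$ for every $t \in S$, so $0 \in S_\thin$. For closure, suppose $x, y \in S_\thin$ and fix a nonzero $t \in S$; the goal is to show $x + y \ll x + y + t$. The idea is to split the ``room'' $t$ into two halves, one to be absorbed by $x$ and one by $y$, and then combine via \axiomO{3}.

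If $S$ is nonelementary, this splitting is provided directly by \autoref{prp:smallElements}: applied with $u_0 = u_1 = t$, it yields a nonzero $w \in S$ with $2w \ll t$. Since $w \neq 0$ and both $x$ and $y$ have thin boundary, $x \ll x+w$ and $y \ll y+w$. Combining these via \axiomO{3} and using $2w \leq t$,
\[
x + y \ll (x+w) + (y+w) = x + y + 2w \leq x + y + t,
\]
as desired.

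If instead $S$ is elementary and satisfies \axiomO{5} and \axiomO{6}, then by \cite[Proposition~5.1.19]{AntPerThi18TensorProdCu} (see \autoref{exa:elementary}) $S$ is isomorphic either to $\NNbar$ or to some $E_k$. A direct inspection shows that $S_\thin$ coincides with the submonoid of compact elements of $S$: in $E_k$ every element (including $\infty$) is compact, while in $\NNbar$ the element $\infty$ fails to lie in $S_\thin$ because $\infty \not\ll \infty = \infty + \infty$. In both cases $S_\thin$ is closed under addition, since compactness is preserved by sums via \axiomO{3}.

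The main obstacle is the splitting step in the nonelementary case, but this is exactly the purpose of \autoref{prp:smallElements}. The elementary case must be handled separately since \autoref{prp:smallElements} excludes it---and indeed in $E_1$ there is no nonzero $w$ with $2w \leq 1$, so the splitting argument genuinely fails there. Fortunately, the fact that every element of $E_k$ is already compact resolves that situation without any splitting.
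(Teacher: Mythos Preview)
Your proof is correct and follows the same strategy as the paper: separate the elementary and nonelementary cases, and in the latter use \autoref{prp:smallElements} to split $t$ into two nonzero pieces so that \axiomO{3} can be applied. Your handling of the elementary case is in fact more careful than the paper's---the paper asserts $S_\thin = S$ for every elementary $S$, which is not quite right for $\NNbar$ (as you correctly observe, $\infty \notin (\NNbar)_\thin$), whereas your identification of $S_\thin$ with the compact elements is accurate and suffices.
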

\begin{proof}
This is clear if $S$ is elementary, since then every element in $S$ way-below another  is compact and therefore $S_{\rm{tb}}=S$;
see \cite[Proposition~5.1.19]{AntPerThi18TensorProdCu}.

We now assume that $S$ is nonelementary.
Let $x,y\in S_\thin$.
To verify that $x+y$ has thin boundary, let $t\in S$ be nonzero.
By \autoref{prp:smallElements}, there is a nonzero element $s$ such that $2s\leq t$. 
This implies
\[
x+y \ll x+s + y+s \leq x+y+t,
\]
as required.
\end{proof}

\begin{lma}
\label{prp:CancelThin}
Let $S$ be a simple, weakly cancellative \CuSgp{} satisfying \axiomO{5}.
Let $x,y,z\in S$ satisfy $x+z\leq y+z$.
Assume that $x,y$ are soft, and that $z$ has thin boundary.
Then $x\leq y$.
\end{lma}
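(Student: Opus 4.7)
The plan is to prove the inequality $x\leq y$ by verifying that every element way-below $x$ is dominated by $y$ and then passing to the supremum via~\axiomO{2}. The case $x=0$ is trivial, so I may assume $x$ is a nonzero soft element. Note that softness of $y$ will not enter the argument.

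The core idea is to combine softness of $x$ with thin boundary of $z$ to produce a single way-below relation to which weak cancellation can be applied. Concretely, given $x'\ll x$, softness of $x$ supplies a nonzero $t\in S$ such that $x'+t\ll x$. The same nonzero $t$ is then fed into the thin boundary hypothesis for $z$ to obtain $z\ll z+t$. These two $\ll$-relations combine via \axiomO{3} into
\[
x'+t+z \ll x+z+t.
\]

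Now the hypothesis $x+z\leq y+z$ enters: adding $t$ to both sides yields $x+z+t\leq y+z+t$, and chaining with the previous relation gives
\[
x'+(z+t) \ll y+(z+t).
\]
Weak cancellation with cancelled summand $z+t$ then produces $x'\ll y$, hence $x'\leq y$. Taking the supremum of such $x'$'s gives $x\leq y$.

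I do not anticipate a real obstacle; the design of the argument makes clear what each hypothesis is doing. Softness of $x$ creates the slack $t$; thin boundary of $z$ converts this slack into an honest way-below relation on the $z$-summand (compact elements would suffice here, but thin boundary is exactly the weakening that makes \axiomO{3} applicable beyond the compact case); and weak cancellation closes the loop. The only care required is to use a \emph{single} nonzero $t$ both in the application of softness and in the application of thin boundary so that the \axiomO{3} step aligns correctly.
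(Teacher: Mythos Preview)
Your proof is correct and follows essentially the same route as the paper: use softness of $x$ to create a nonzero slack $t$, feed $t$ into the thin-boundary hypothesis for $z$, combine via \axiomO{3}, and apply weak cancellation. The only cosmetic difference is that the paper interpolates $x'\ll x''\ll x$ and cancels the summand $z$, whereas you work directly with $x'\ll x$ and cancel $z+t$; both arrive at $x'\ll y$ and hence $x\leq y$, and both correctly observe that softness of $y$ is not used.
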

\begin{proof}
If $x=0$ the result is trivial, so we may assume otherwise.

Let $x'\in S$ satisfy $x'\ll x$.
Choose $x''\in S$ such that $x'\ll x''\ll x$. Since $x$ is nonzero and soft, there exists a nonzero $t\in S$ with $x''+t\leq x$. 
Hence,
\[
x'+z \ll x''+(z+t) \leq x+z \leq y+z.
\]
Using weak cancellation, we get $x'\ll y$.

Since this holds for every $x'$ way-below $x$, we get $x\leq y$.
\end{proof}

\begin{lma}
\label{prp:SummandsThin}
Let $S$ be a simple, weakly cancellative \CuSgp{}.
Let $x,y\in S$ such that $x+y$ has thin boundary.
Then $x$ and $y$ have thin boundary.
\end{lma}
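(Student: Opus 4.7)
The plan is to prove thin boundary of $x$ directly from the definition; the argument for $y$ is symmetric, so I will only worry about $x$. Fix an arbitrary nonzero $t\in S$; I need to show $x \ll x+t$.

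The idea is to feed $t$ into the thin boundary property of $x+y$ and then cancel $y$ using weak cancellation. Since $x+y$ has thin boundary and $t\neq 0$, the hypothesis gives
\[
x+y \ll (x+y)+t.
\]
Rewriting the right-hand side as $(x+t)+y$, this becomes
\[
x+y \ll (x+t)+y.
\]
Now apply weak cancellation with $z=y$: from $x+z \ll (x+t)+z$ we conclude $x \ll x+t$, which is exactly what we needed. The same argument with the roles of $x$ and $y$ interchanged shows that $y$ has thin boundary.

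I do not expect a real obstacle here: the proof is a one-line application of weak cancellation, and notably it does not seem to use simplicity of $S$. (Simplicity is presumably only in the hypotheses because thin boundary was defined for simple $\Cu$-semigroups in \autoref{dfn:ThinBoundary}.)
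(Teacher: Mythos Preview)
Your proof is correct and essentially identical to the paper's: apply thin boundary of $x+y$ to get $x+y\ll(x+y)+t=(x+t)+y$, then use weak cancellation to conclude $x\ll x+t$. Your parenthetical observation that simplicity is not actually used in the argument is also accurate.
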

\begin{proof}
To show that $x$ has thin boundary, let $t\in S$ be nonzero.
Then 
\[
x+y \ll (x+y)+t = (x+t) + y,
\]
which, by weak cancellation, implies that $x\ll x+t$, as desired.
Analogously, one shows that $y$ has thin boundary.
\end{proof}

\begin{lma}
\label{prp:additionTBPresLL}
Let $S$ be a simple, stably finite \CuSgp{} satisfying \axiomO{5}.
Let $x\in S$ have thin boundary, and let $s,t\in S$ satisfy $s\ll t$.
Assume that $t$ is nonzero and soft.
Then $x+s\ll x+t$.
\end{lma}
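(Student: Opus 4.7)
The plan is to exploit the definitions of thin boundary and softness directly, so that \axiomO{3} does the rest.

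First I would interpolate: choose $s_1 \in S$ with $s \ll s_1 \ll t$, which is possible by \axiomO{2}. Next, since $t$ is nonzero and soft, the very definition of softness applied to $s_1 \ll t$ produces a nonzero element $r \in S$ such that $s_1 + r \ll t$. This $r$ is the key piece of ``slack'' that softness grants us.

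Now I would bring in the thin boundary hypothesis on $x$: since $r$ is nonzero, the definition of thin boundary gives $x \ll x+r$. Combining this with $s \ll s_1$ via \axiomO{3} yields
\[
x+s \ll (x+r) + s_1 = x + (s_1 + r).
\]
Since $s_1 + r \ll t$ in particular implies $s_1 + r \leq t$, we obtain $x + (s_1+r) \leq x+t$. Because $a \ll b \leq c$ implies $a \ll c$ in any poset with the interpolation/cofinality structure of a \CuSgp{}, we conclude $x+s \ll x+t$, as wanted.

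I do not expect any real obstacle: the hypotheses of simplicity, stable finiteness, and \axiomO{5} enter only tacitly, insofar as they are part of the setting in which the notion of a soft element (as used in the paper) is defined and well-behaved. The single substantive observation is that softness of $t$ produces precisely the nonzero element~$r$ that the thin boundary property of $x$ is designed to absorb, after which one application of \axiomO{3} closes the argument.
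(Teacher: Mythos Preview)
Your proof is correct and follows essentially the same route as the paper's: interpolate $s\ll s_1\ll t$, use softness of $t$ to extract a nonzero slack element $r$ with $s_1+r\leq t$, then use thin boundary of $x$ and \axiomO{3} to conclude. The only cosmetic difference is notation ($s_1,r$ versus the paper's $t',c$) and that the paper records $t'+c\leq t$ rather than $\ll$, which is immaterial.
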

\begin{proof}
Choose $t'\in S$ such that $s\ll t'\ll t$.
Since $t$ is nonzero and soft, there exists a nonzero $c\in S$ such that $t'+c\leq t$.
Then
\[
x+s \ll (x+c)+t' \leq x+t,
\]
as desired.
\end{proof}

\begin{dfn}
\label{dfn:complementable}
Let $S$ be a simple, soft \CuSgp.
We say that $x\in S$ is \emph{complementable} if for every $y\in S$ satisfying $x\ll y$ there exists $z\in S$ such that $x+z=y$.
\end{dfn}

The next result implies that elements with thin boundary are complementable;
see \autoref{prp:ThinImplComplementable}.

\begin{prp}
\label{prp:ThinIsComplementable}
Let $S$ be a simple, stably finite \CuSgp{} satisfying \axiomO{5} and \axiomO{6}.
Let $x,y\in S$ satisfy $x\ll y$.
Assume that $x$ has thin boundary and that $y$ is soft.
Then there exists $z\in S$ such that $x+z=y$.
\end{prp}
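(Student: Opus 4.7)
My goal is to construct $z \in S$ with $x + z = y$ as the supremum of an increasing sequence $(c_n)_n$ in $S$. The strategy is: fix an increasing sequence $x_1 \ll x_2 \ll \ldots$ with $\sup_n x_n = x$ via \axiomO{2}, and build $c_n$'s satisfying
\[
x_n + c_n \;\leq\; y \;\leq\; x_{n+1} + c_n \andSep c_n \leq c_{n+1}.
\]
Granted such a sequence, set $z := \sup_n c_n$. Then by \axiomO{4},
\[
x + z \;=\; \sup_n (x_n + c_n) \;\leq\; y \;\leq\; x_{n+1} + c_n \;\leq\; x + c_n \;\leq\; x + z,
\]
so $x + z = y$, as required.

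The first $c_1$ is produced directly by applying \axiomO{5} to $x_1 \ll x_2 \leq y$ (using $x_2 \ll x \ll y$). For the inductive step, assuming $c_n$ satisfies the above, I apply the full form of \axiomO{5} to the inequality $x_n + c_n \leq y$, with some $a' \ll x_n$ as the $a'$-parameter and some $c_n' \ll c_n$ as the $b'$-parameter; this produces a new element $c$ with $c_n' \ll c$ (which will give monotonicity $c_n \leq c_{n+1}$ after passing to the sup over $c_n'$). The delicate part is arranging so that the new inequality takes the form $x_{n+1} + c_{n+1} \leq y \leq x_{n+2} + c_{n+1}$ at the next level, rather than simply reproducing the bounds at a smaller $x$-index.

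This is where both hypotheses enter jointly. Softness of $y$ guarantees that strict refinements above $x_n + c_n$ are available: for any $w \ll y$, there is nonzero $s$ with $w + s \ll y$, providing the extra ``room'' needed to push $c_n$ upward to $c_{n+1}$ while staying under $y$. Thin boundary of $x$, in the form $x \ll x + t$ for every nonzero $t$, controls what happens on the $x$-side: it ensures that when one trades an increment in the $x$-coordinate for an increment in the $c$-coordinate, the relations $x_n \ll x_{n+1}$ and $c_n \leq c_{n+1}$ can be maintained simultaneously across the induction. Combining these ingredients with iterated uses of \axiomO{5} yields the required monotonic sequence.

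\textbf{Main obstacle.} The crux is precisely the monotonicity $c_n \leq c_{n+1}$. Naively, \axiomO{5} at level $n$ produces a ``$c_n$'' behaving morally like $y - x_n$, which decreases as $x_n$ grows, giving a decreasing rather than increasing sequence. Reversing this trend requires exploiting \axiomO{5}'s ``$b' \ll c$'' clause against an inequality whose $x$-coordinate has been pushed up using softness of $y$, with thin boundary of $x$ ensuring that this push is compatible with the continuation of the induction. Getting all the indices to line up so that $(c_n)_n$ genuinely increases while $(x_n)_n$ increases to $x$ is the technical heart of the proof.
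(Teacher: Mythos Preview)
Your plan does not work as stated, and the gap is exactly where you flag the ``main obstacle'' without resolving it. The conditions you ask for,
\[
x_n + c_n \leq y \leq x_{n+1} + c_n \andSep c_n \leq c_{n+1},
\]
are in general incompatible. Already in $S=[0,\infty]$ with $x=1$, $y=2$, $x_n=1-\tfrac{1}{n}$, the first pair of inequalities forces $c_n\in\big[1+\tfrac{1}{n+1},\,1+\tfrac{1}{n}\big]$, and comparing with the corresponding interval for $c_{n+1}$ shows $c_n\leq c_{n+1}$ forces $c_n=c_{n+1}=1+\tfrac{1}{n+1}$; iterating gives a contradiction. Heuristically, your $c_n$ is trying to be $y-x_{n+1}$, which is \emph{decreasing}, so the monotonicity goes the wrong way. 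Your final paragraph acknowledges this but the sentence ``combining these ingredients with iterated uses of \axiomO{5} yields the required monotonic sequence'' is not an argument; nothing in your setup uses the thin-boundary hypothesis on $x$ in a concrete way (it is a property of $x$ itself, not of the approximants $x_n$), and \axiomO{6} is never invoked at all.

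The paper's proof avoids this by reversing the roles: it keeps $x$ fixed and approximates $y$ from below by a sequence $(y_n)_n$ with $y_0=x$ and nonzero gaps $s_n$ satisfying $y_n+s_n\ll y_{n+1}$ (this uses softness of $y$). Via \axiomO{6} and nonelementariness (\autoref{prp:smallElements}) it then produces a rapidly shrinking auxiliary sequence $(r_n)_n$ with $2r_{n+1}\ll r_n,s_{n+1}$. Thin boundary of $x$ enters precisely to give $x+r_{n+2}\ll x+r_{n+1}$, which is the $\ll$-input needed to apply \axiomO{5} and obtain elements $v_n$ with $x+r_{n+1}+v_n\leq y_n\leq x+r_n+v_n$ together with an increasing sequence $(w_n)_n$ satisfying $w_n\ll v_{n+1}$ and $y_{n-1}\leq x+w_n$. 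Here $w_n$ is morally $y_{n-1}-x$, which \emph{does} increase, and $z:=\sup_n w_n$ works. If you want to salvage your approach, you should switch to approximating $y$ rather than $x$, and introduce an auxiliary decreasing sequence of small nonzero elements to absorb the error terms; that is essentially the paper's argument.
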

\begin{proof}
Applying \cite[Proposition~5.1.19]{AntPerThi18TensorProdCu}, the result is clear if $S$ is elementary.
Thus, we may assume that $S$ is nonelementary.
The result is also clear if $x=0$, so we may assume that $x\neq 0$.

\textbf{Step~1:}
\emph{We construct an increasing sequence $(y_n)_n$ with supremum $y$ and $x\leq y_0$, and a sequence $(s_n)_n$ of nonzero elements such that 
\[
y_n+s_n\ll y_{n+1}
\]
for every $n\in\NN$.
}

First, let $(\bar{y}_n)_n$ be any $\ll$-increasing sequence in $S$ with supremum $y$. 
Set $y_0:=x$.
Since $S$ is simple and stably finite, it follows from \cite[Proposition~5.3.18]{AntPerThi18TensorProdCu} that there exists a soft element $y'$ such that $y_0\ll y'\ll y$. Since $y'$ is nonzero and soft, one can find  a non-zero element $s_0$ such that $y_0+s_0\leq y'$.

Using that $y_0+s_0$ and $\bar{y}_1$ are way-below $y$, choose $y_1$ such that
\[
y_0+s_0\ll y_1, \quad \bar{y}_1 \ll y_1, \andSep y_1 \ll y.
\]

Then $y_1\ll y$, and we can apply the previous argument once again to obtain $s_1\neq 0$ such that $y_1+s_1\ll y$.
Using that $y_1+s_1$ and $\bar{y}_2$ are way-below $y$, we obtain $y_2$ such that
\[
y_1+s_1\ll y_2, \quad \bar{y}_2 \ll y_2, \andSep y_2 \ll y.
\]
Continuing this way, we obtain the desired sequences $(y_n)_n$ and $(s_n)_n$.

\textbf{Step~2:}
\emph{We construct a sequence $(r_n)_n$ of nonzero elements such that 
\begin{equation}
\label{prp:ThinIsComplementable:eq1}
2r_{n+1}\ll r_n,s_{n+1}, \andSep y_n+r_n+r_{n+1}\ll y_{n+1}
\end{equation}
for every $n\in\NN$.
}

Applying \autoref{prp:smallElements} for $s_0$, we obtain a nonzero $r_0\in S$ such that $2r_0\ll s_0$.
Then, applying \autoref{prp:smallElements} for $r_0$ and $s_1$, we obtain a nonzero $r_1\in S$ such that $2r_1\ll r_0,s_1$.
Continuing this way, we obtain a sequence $(r_n)_n$ such that $2r_{n+1}\ll r_n,s_{n+1}$ for every $n\in\NN$.

For each $n\in\NN$, we have
\[
y_n + r_n + r_{n+1} \leq y_n + 2r_n \leq y_n + s_n \ll y_{n+1},
\]
which shows that $(r_n)_n$ has the desired properties.

\textbf{Step~3:}
\emph{We construct an $\ll$-increasing sequence $(w_n)_n$ and a sequence $(v_n)_n$ such that
\[
x+r_{n+1}+v_n \leq y_n \leq x+r_n+v_n, \quad
w_n\ll r_{n}+v_{n}, v_{n+1} , \andSep 
y_{n-1} \leq x+w_n
\]
for every $n\geq 1$.
}

To start, using \autoref{prp:additionTBPresLL} at the first step, we have
\[
x + r_2
\ll x + r_1
\leq y_0 + s_0
\leq y_1.
\]
Applying \axiomO{5}, we obtain $v_1\in S$ such that
\[
x + r_2 + v_1  \leq y_1 \leq x+r_1 + v_1.
\]
Using that $y_0\ll y_1$, we can choose $w_1\in S$ such that
\[
y_0 \leq x+w_1, \andSep
w_1\ll r_1+v_1.
\]

Next, let $n\geq 1$, and assume that we have chosen $v_n$ and $w_n$.
Using for the first inequality that $x+r_{n+1}+v_n\leq y_n$ and \eqref{prp:ThinIsComplementable:eq1}, we have
\[
x+r_{n+1}+r_n+v_n
\leq y_{n+1}, \quad
x+r_{n+2} \ll x+r_{n+1}, \andSep
w_n \ll r_n+v_n.
\]
Applying \axiomO{5}, we obtain $v_{n+1}\in S$ such that
\[
x + r_{n+2} + v_{n+1} \leq y_{n+1} \leq x+r_{n+1}+v_{n+1}, \andSep
w_n\ll v_{n+1}.
\]
Using that $y_n\ll y_{n+1}$ and $w_n\ll v_{n+1}\leq  r_{n+1}+v_{n+1}$, we obtain $w_{n+1}\in S$ such that
\[
y_n
\leq x + w_{n+1}, \andSep
w_n\ll w_{n+1} \ll r_{n+1}+v_{n+1}.
\]

Now, the sequence $(w_n)_n$ is increasing, which allows us to set $z:=\sup_n w_n$.
For every $n\geq 1$, we have
\[
x+w_n
\leq x+v_{n+1}
\leq y_{n+2} 
\leq y
\]
and therefore $x+z\leq y$.
Further, for every $n\geq 1$, we have
\[
y_n \leq x+w_{n+1} \leq x+z
\]
and therefore $y\leq x+z$.
This implies $x+z=y$.
\end{proof}


\begin{cor}
\label{prp:ThinImplComplementable}
Let $S$ be a simple, soft, stably finite \CuSgp{} satisfying \axiomO{5} and \axiomO{6}.
Then every element in $S$ with thin boundary is complementable.
\end{cor}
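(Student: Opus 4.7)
The corollary is essentially a direct consequence of \autoref{prp:ThinIsComplementable}, so the plan is to simply unpack the definition of complementability and invoke that proposition.

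More precisely, I would proceed as follows. Fix $x \in S$ with thin boundary; to show $x$ is complementable, let $y \in S$ satisfy $x \ll y$, and we must produce $z \in S$ with $x + z = y$. Since $S$ is soft by hypothesis, every element of $S$ is soft, and in particular $y$ is soft. We are thus in exactly the setting of \autoref{prp:ThinIsComplementable}: $S$ is simple, stably finite, and satisfies \axiomO{5} and \axiomO{6}; $x \ll y$; $x$ has thin boundary; and $y$ is soft. Applying that proposition yields the required $z \in S$ with $x + z = y$, which verifies \autoref{dfn:complementable}.

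There is no real obstacle here — all the substantive work has already been carried out in \autoref{prp:ThinIsComplementable}. The only thing to check is that the hypothesis ``$y$ soft'' in the proposition is automatic in the corollary, which is immediate from the assumption that $S$ itself is soft.
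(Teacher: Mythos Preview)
Your proposal is correct and matches the paper's approach exactly: the paper states this as a corollary with no explicit proof, since it follows immediately from \autoref{prp:ThinIsComplementable} once one observes that softness of $S$ makes every $y$ soft. There is nothing to add.
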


If we additionally assume that $S$ is weakly cancellative, then the converse of \autoref{prp:ThinImplComplementable} also holds:

\begin{thm}
\label{prp:ThinIffComplementable}
Let $S$ be a simple, soft, weakly cancellative \CuSgp{} satisfying \axiomO{5} and \axiomO{6}, and let $x\in S$ satisfy $x\ll\infty$.
Then $x$ has thin boundary if and only if $x$ is complementable.
\end{thm}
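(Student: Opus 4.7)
The forward implication is immediate from \autoref{prp:ThinImplComplementable}: since $S$ is soft, every $y\in S$ is soft, so whenever $x\ll y$, the softness hypothesis on $y$ in \autoref{prp:ThinIsComplementable} is automatically satisfied, yielding $z\in S$ with $x+z=y$.

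For the converse, suppose $x$ is complementable with $x\ll\infty$. The case $x=0$ is trivial, so assume $x\neq 0$, and fix a nonzero $t\in S$; the goal is $x\ll x+t$, which follows once we produce some $y\in S$ with $x\ll y\leq x+t$. I would first exploit $x\ll\infty$ and complementability to extract a reference complement: by \axiomO{2}, pick $y_0\in S$ with $x\ll y_0\ll\infty$, and apply complementability to get $z_0\in S$ with $y_0=x+z_0$. Then $z_0\neq 0$ (otherwise $x=y_0$ would be a nonzero compact element, impossible in a soft \CuSgp) and $z_0\ll\infty$. Invoking simplicity, one has $\infty=\sup_n nt$, so $z_0\leq n_0 t$ for some $n_0\geq 1$; consequently $x\ll x+z_0\leq x+n_0 t$, and in particular $x\ll x+n_0 t$.

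It remains to descend from $x\ll x+n_0 t$ down to $x\ll x+t$, which I would handle by induction on the coefficient: reduce $x\ll x+nt$ to $x\ll x+(n-1)t$ for each $n\geq 2$. Given $x\ll x+nt$, use \axiomO{2} to choose $y'$ with $x\leq y'\ll x+nt=(x+(n-1)t)+t$, then apply \axiomO{6} to an element $y_0'\ll y'$ to obtain $v,w\in S$ with $v\leq x+(n-1)t$, $w\leq t$, and $y_0'\leq v+w$. To upgrade the $\leq$-relation $v\leq x+(n-1)t$ into the $\ll$-relation required for $x\ll x+(n-1)t$, I would perturb $v$ by a nonzero $\varepsilon$ supplied by \autoref{prp:smallElements}, apply complementability of $x$ to a suitable enlargement containing $x$ as a summand to extract an exact decomposition, and then invoke weak cancellation to strip off the $x$-contribution.

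The principal obstacle is precisely this inductive step: \axiomO{6} only delivers $\leq$-style decompositions, and promoting these to $\ll$ seems to require a delicate interplay between complementability (which produces exact equalities $y=x+z$ whenever $y\gg x$), softness of $S$ (which lets us introduce nonzero perturbations at each stage via \autoref{prp:smallElements}), and weak cancellation (which allows us to remove the common $x$-summand). In spirit, the argument would mirror, in reverse, the iterative approximation used in the proof of \autoref{prp:ThinIsComplementable}, but with the roles of the hypothesis and the conclusion exchanged.
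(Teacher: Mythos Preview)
Your forward implication is fine. The backward implication, however, has a real gap: the inductive descent from $x\ll x+nt$ to $x\ll x+(n-1)t$ is never actually executed, and the sketch you offer (an \axiomO{6} decomposition, then ``perturb by a small element and cancel'') does not obviously close. Applying \axiomO{6} to $x'\ll x\leq (x+(n-1)t)+t$ yields $v\leq x,\,x+(n-1)t$ and $w\leq x,\,t$ with $x'\leq v+w$, but there is no evident mechanism to promote this to $x\ll x+(n-1)t$: complementability only produces decompositions of elements strictly above $x$, and weak cancellation requires a \emph{common} summand on both sides of a $\ll$-inequality, which your \axiomO{6} output does not supply. You yourself flag this as ``the principal obstacle,'' and the proposal does not resolve it.

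The paper avoids induction altogether with a single telescoping construction. Instead of bounding a complement $z_0$ by a multiple of $t$, it bounds $x$ itself: choose a nonzero $t'\ll t$, use $x\ll\infty=\infty t'$ to get $x\leq nt'$, then pick a chain $t'\ll t_1\ll t_2\ll\cdots\ll t_n\ll t$ and set $y:=t_1+\cdots+t_n$. Then $x\leq nt'\ll y$, so complementability gives $z$ with $x+z=y$; and the chain forces
\[
y=t_1+\cdots+t_n\ll t_2+\cdots+t_n+t\leq y+t,
\]
whence $x+z\ll (x+t)+z$ and weak cancellation delivers $x\ll x+t$ in one stroke. The idea you are missing is to manufacture a single element $y\gg x$ that \emph{already} satisfies $y\ll y+t$, rather than trying to reduce the coefficient of $t$ step by step.
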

\begin{proof}
The forwards implication follows from \autoref{prp:ThinImplComplementable}.
To show the backwards implication, assume that $x$ is complementable.
To verify that $x$ has thin boundary, let $t\in S$ be nonzero.
Choose a nonzero element $t'\in S$ with $t'\ll t$.
Then $x\ll\infty=\infty t'$, which allows us to choose $n\geq 1$ such that $x\leq nt'$.
Choose $t_1,\ldots,t_n\in S$ such that
\[
t'\ll t_1 \ll t_2 \ll \ldots \ll t_n \ll t.
\]
Set $y:=t_1+\ldots+t_n$.
Then $x\leq nt'\ll y$.
Since $x$ is complementable, we obtain $z\in S$ such that $x+z=y$.

Note that
\[
y = t_1+t_2+\ldots+t_{n-1}+t_n
\ll t_2+t_3+\ldots+t_n+t 
\leq y+t,
\]
and therefore
\[
x+z=y\ll y+t = x+z+t.
\]
By weak cancellation, we obtain $x\ll x+t$, as desired.
\end{proof}

\begin{thm}
\label{prp:ThinSummary}
Let $S$ be a simple, soft, weakly cancellative \CuSgp{} satisfying \axiomO{5} and \axiomO{6}.
Then $S_\thin$ is a cancellative monoid.
Further, $x,y\in S_\thin$ satisfy $x\ll y$ if and only if there exists $z\in S_\thin^\times$ with $x+z=y$.
\end{thm}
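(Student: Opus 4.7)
The plan is to assemble the statement from the earlier lemmas on thin-boundary elements. That $S_\thin$ is a submonoid is exactly \autoref{prp:ThinMonoid}. For cancellativity, suppose $x,y,z\in S_\thin$ satisfy $x+z=y+z$. Because $S$ is soft, the elements $x$ and $y$ are soft, and $z$ has thin boundary, so \autoref{prp:CancelThin} applies to both inequalities $x+z\leq y+z$ and $y+z\leq x+z$, yielding $x\leq y$ and $y\leq x$, hence $x=y$.

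For the equivalence, the easy direction is the converse: if $x\in S_\thin$ and $z\in S_\thin^\times$ satisfy $x+z=y$, then by the very definition of thin boundary (applied to $x$ with the nonzero element $z$) we have $x\ll x+z=y$. For the forward direction, assume $x\ll y$ with $x,y\in S_\thin$. Since $S$ is simple, soft and weakly cancellative, it is in particular stably finite (as noted in \autoref{pgr:soft}), so it satisfies all hypotheses of \autoref{prp:ThinImplComplementable}. That result, applied to $x$ (which has thin boundary) and $y$ (which is soft, since $S$ is soft), produces $z\in S$ with $x+z=y$. Now \autoref{prp:SummandsThin}, whose hypotheses are met because $S$ is simple and weakly cancellative, tells us that both summands of $x+z=y\in S_\thin$ lie in $S_\thin$, so $z\in S_\thin$.

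It remains only to check that $z\neq 0$. If $z=0$, then $y=x$, so $x\ll x$ would force $x$ to be compact; but in a nonzero soft \CuSgp{} the only compact element is $0$, since a nonzero compact $x$ would satisfy $x\ll x$ and hence, by softness, admit a nonzero $t$ with $x+t\ll x$, contradicting stable finiteness. Hence $x=y=0$, which is the only (trivial) exception; in all other cases $z\in S_\thin^\times$, completing the proof.

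The argument is essentially a bookkeeping exercise built on the substantial work done in \autoref{prp:ThinImplComplementable}, so there is no real obstacle; the only point requiring a moment's care is the nontriviality of $z$ in the forward direction, handled through the general fact that soft \CuSgp{s} contain no nonzero compact elements.
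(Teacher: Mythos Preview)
Your proof is correct and follows the same route as the paper's: \autoref{prp:ThinMonoid} and \autoref{prp:CancelThin} for the cancellative monoid structure, then complementability (the paper cites \autoref{prp:ThinIffComplementable} where you use the weaker \autoref{prp:ThinImplComplementable}, which already suffices) together with \autoref{prp:SummandsThin} for the equivalence. Your explicit treatment of the degenerate case $x=y=0$ is more careful than the paper, which simply asserts that $y$ is not compact and hence $z\neq 0$.
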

\begin{proof}
By \autoref{prp:ThinMonoid} and~\ref{prp:CancelThin}, $S_\thin$ is a cancellative monoid.
Let $x,y\in S_\thin$.
If $x\ll y$, then by \autoref{prp:ThinIffComplementable} there exists $z\in S$ such that $x+z=y$.
Since $y$ is not compact, we have $z\neq 0$.
Further, by \autoref{prp:SummandsThin}, we have $z\in S_\thin$.
Conversely, if $z\in S$ is nonzero such that $x+z=y$, then $x\ll x+z=y$ by definition.
\end{proof}

\section{Simple, zero-dimensional Cuntz semigroups}
\label{sec:simple}

In this section, we study countably based, simple, weakly cancellative \CuSgp{s} $S$ that satisfy \axiomO{5} and \axiomO{6} (for example the Cuntz semigroups of separable, simple \ca{s} of stable rank one).
First, we prove a dichotomy:
If~$S$ is zero-dimensional, then $S$ is either algebraic or soft;
see \autoref{prp:DichotomySimpleDim0}.
Conversely, if $S$ is algebraic, then $S$ is automatically zero-dimensional by \autoref{prp:charAlgDim0}.
On the other hand, if $S$ is soft, then $S$ is zero-dimensional if and only if the elements with thin boundary are sup-dense;
see \autoref{prp:CharSimpleSoftDim0}.
We deduce that $S$ is zero dimensional if and only if $S$ is the retract of a simple, algebraic \CuSgp{}; see \autoref{prp:CharSimpleDim0ByRetract}.

This should be compared with \autoref{prp:simple_Zstable}, where we showed that a separable, simple, $\mathcal{Z}$-stable \ca{} has zero-dimensional Cuntz semigroup if and only if $A$ has real rank zero or $A$ is stably projectionless.

\begin{lma}
\label{prp:DichotomySimpleDim0}
Let $S$ be a simple, weakly cancellative \CuSgp{} satisfying \axiomO{5}.
Assume that $\dim(S)=0$ and $S\neq\{0\}$.
Then, $S$ is either algebraic or soft.
\end{lma}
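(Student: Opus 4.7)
The plan is to reduce the dichotomy to the algebraic-ideal lemma (\autoref{prp:AlgebraicIdeal}) together with simplicity. First, note that since $S$ is simple and weakly cancellative, it is stably finite (as recorded in \autoref{pgr:soft}), so that the compact/soft decomposition $S = S_{\rm{soft}}^{\times} \sqcup S_c$ given by \cite[Proposition~5.3.16]{AntPerThi18TensorProdCu} applies. In particular, $S$ is soft precisely when $S_c = \{0\}$, i.e., when $S$ has no nonzero compact elements.

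Suppose then that $S$ is not soft, so that there exists a nonzero element $c \in S$ which by the dichotomy must be compact. The hypotheses on $S$ (weak cancellation, \axiomO{5}, zero-dimensionality) are exactly those required by \autoref{prp:AlgebraicIdeal}, and we conclude that the ideal $I \subseteq S$ generated by $c$ is algebraic. Now invoke simplicity: since $c \neq 0$, the ideal $I$ must equal $S$, and hence $S$ itself is algebraic.

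No delicate step is anticipated here: the substantive work has been done in \autoref{prp:AlgebraicIdeal}, where weak cancellation together with $\dim(S)=0$ forces a decomposition $nc = z_1 + z_2$ with $z_1, z_2$ compact, and the rest of the argument is just the observation that in a simple $\Cu$-semigroup any nonzero element generates the whole semigroup as an ideal.
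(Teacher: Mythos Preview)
Your proof is correct and follows essentially the same approach as the paper: assume $S$ is not soft, extract a nonzero compact element, and apply \autoref{prp:AlgebraicIdeal} together with simplicity. You supply more detail than the paper does---in particular, you justify via the compact/soft decomposition of \autoref{pgr:soft} why ``not soft'' yields a nonzero compact element, and you make the simplicity step explicit---whereas the paper's proof dispatches both points in a single sentence.
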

\begin{proof}
Assume that $S$ is not soft.
Then there exists a nonzero compact element in $S$, which by \autoref{prp:AlgebraicIdeal} implies that $S$ is algebraic.
\end{proof}

\begin{lma}
\label{prp:DenseThinImplRiesz}
Let $S$ be a simple, soft, weakly cancellative \CuSgp{} satisfying \axiomO{5} and \axiomO{6}.
Assume that $S_\thin$ is sup-dense.
Let $x,y,z\in S$ satisfy $x\ll y+z$, and assume that $x$ has thin boundary.
Then there exist $v,w\in S_\thin$ such that
\[
x=v+w, \quad v\ll y, \andSep w\ll z.
\]
\end{lma}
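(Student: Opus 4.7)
The plan is to use the defining property of thin-boundary elements---namely that $x \ll x + t$ for every nonzero $t \in S$---to apply \axiomO{6} with an exact left-hand side, and then to extract the decomposition via the complementability and cancellativity of $S_\thin$ (\autoref{prp:ThinIsComplementable} and \autoref{prp:ThinSummary}).

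First, I would reduce to $y, z \in S_\thin$: by sup-density of $S_\thin$ and the identity $y + z = \sup_n (y_n + z_n)$ for $\ll$-increasing sequences in $S_\thin$ converging to $y$ and $z$, pick $y_0, z_0 \in S_\thin$ with $y_0 \ll y$, $z_0 \ll z$, and $x \ll y_0 + z_0$. It then suffices to produce $v, w \in S_\thin$ with $v + w = x$, $v \ll y_0$, and $w \ll z_0$.

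Next, apply \axiomO{6} with the thin-boundary trick. Since $x \in S_\thin$ and $x \ll y_0 + z_0$ with $y_0 + z_0$ soft (as $S$ is soft), \autoref{prp:ThinIsComplementable} yields a nonzero $c \in S_\thin$ with $x + c = y_0 + z_0$. Pick any nonzero $t \ll c$, so that \autoref{prp:additionTBPresLL} gives $x + t \ll x + c = y_0 + z_0$, hence $x + t \le y_0 + z_0$. Applying \axiomO{6} to the chain $x \ll x + t \le y_0 + z_0$ produces $a, b \in S$ with $a \le y_0$, $b \le z_0$, and the exact inequality $x \le a + b$ (rather than merely $x' \le a + b$ for some $x' \ll x$, thanks to the trick). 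Refining via sup-density and interpolation, choose $\tilde a, \tilde b \in S_\thin$ with $a \ll \tilde a \ll y$ and $b \ll \tilde b \ll z$, so that $x \le a + b \ll \tilde a + \tilde b$ by \axiomO{3}; hence \autoref{prp:ThinIsComplementable} yields $x + e = \tilde a + \tilde b$ with $e \in S_\thin$ (using \autoref{prp:SummandsThin}).

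Finally, the task reduces to splitting $e = e_a + e_b$ with $e_a \le \tilde a$, $e_b \le \tilde b$, and $e_a, e_b \in S_\thin$. Granted such a split, complementations $e_a + v = \tilde a$ and $e_b + w = \tilde b$ yield $v, w \in S_\thin$, and cancellativity of $S_\thin$ applied to
\[
v + w + e_a + e_b \,=\, \tilde a + \tilde b \,=\, x + e \,=\, x + e_a + e_b
\]
gives $v + w = x$; the required way-below relations $v \ll y$ and $w \ll z$ follow from $v \le \tilde a \ll y$ and $w \le \tilde b \ll z$. The main obstacle I expect is this splitting of $e$: it is a Riesz-type decomposition of $e$ inside $\tilde a + \tilde b$, which I would attack by iterating the construction on $e$ with tighter approximations, using \autoref{prp:smallElements} to coordinate the choices and arguing that the residual shrinks to zero at each stage.
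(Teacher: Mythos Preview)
Your reduction to $y_0,z_0\in S_\thin$ and the use of the thin-boundary trick to apply \axiomO{6} with an exact left-hand side are both fine.  The genuine gap is the last step: splitting $e$ as $e_a+e_b$ with $e_a\ll\tilde a$ and $e_b\ll\tilde b$ (you need $\ll$, not $\le$, to invoke complementability) is exactly the statement of the lemma applied to $e\ll\tilde a+\tilde b$.  So the argument is circular as written.  Your proposed fix---iterate and argue that ``the residual shrinks to zero''---has no clear meaning here: there is no metric, and each pass of the construction produces a new element of $S_\thin$ to be split below a new sum, with nothing forcing convergence or termination.  Iterative supremum arguments (as in \autoref{prp:ThinIsComplementable}) can sometimes be made to work, but you would need to produce increasing sequences $(v_n)$, $(w_n)$ with controlled suprema \emph{and} verify the exact equality $v+w=x$ at the limit; none of that is sketched.

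The paper sidesteps the circularity entirely by reversing the roles.  Instead of complementing $x$ inside $\tilde a+\tilde b$ and then trying to split the complement, it builds slack on the $z$-side first: choose $z'\ll z$ with $x\ll y+z'$, then a nonzero $t$ with $z'+t\ll z$, and then $x'\in S_\thin$ with $x'\ll x\ll x'+t$.  Apply \axiomO{6} to $x'\ll x\ll y+z'$ to get $e\ll x,y$ (with $e\in S_\thin$) and $f\ll x,z'$, with $x'\ll e+f$.  Now complement $e$ \emph{inside $x$}: $e+c=x$.  The slack $t$ does the rest: from $e+c=x\ll x'+t\le e+f+t$ and weak cancellation one gets $c\ll f+t\le z'+t\ll z$ directly.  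No residual splitting is needed; $v:=e$ and $w:=c$ already work, and \autoref{prp:SummandsThin} gives $c\in S_\thin$.
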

\begin{proof}
We may assume that $z$ is nonzero, since otherwise $v=x$ and $w=0$ trivially satisfy the required conditions. 
Choose $z'\in S$ such that
\[
x\ll y+z', \andSep z'\ll z.
\]
Using that $z$ is nonzero and soft, we obtain a nonzero $t\in S$ such that $z'+t\ll z$.
Since $x$ has thin boundary, we have $x\ll x+t$, which allows us to choose $x'\in S$ such that
\[
x'\ll x\ll x'+t.
\]
Since $S_\thin$ is sup-dense, we may assume that $x'$ has thin boundary.

Applying \axiomO{6} to $x'\ll x\ll y+z'$, we obtain $e,f\in S$ such that
\[
x'\ll e+f, \quad e\ll x,y, \andSep f\ll x,z'.
\]
Since $S_\thin$ is sup-dense, we may assume that $e$ has thin boundary.

By \autoref{prp:ThinImplComplementable}, $e$ is complementable.
Thus, we obtain $c\in S$ such that $e+c=x$.
Then
\[
e+c = x \ll x'+t \leq e+f+t.
\]
By weak cancellation, we get $c\ll f+t$ and therefore
\[
c \ll f+t \leq z'+t \ll z.
\]
By \autoref{prp:SummandsThin}, $e$ and $c$ have thin boundary.
Hence, $v:=e$ and $w:=c$ have the desired properties.
\end{proof}

\begin{prp}
\label{prp:DenseThinImplDim0}
Let $S$ be a simple, soft, weakly cancellative \CuSgp{} satisfying \axiomO{5} and \axiomO{6}.
Assume that $S_\thin$ is sup-dense.
Then $S_\thin$ is a simple, cancellative refinement monoid and $\dim(S)=0$.
\end{prp}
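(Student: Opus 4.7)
The proof is essentially a matter of bookkeeping, since the heavy lifting has already been carried out in \autoref{prp:DenseThinImplRiesz} and \autoref{prp:ThinSummary}. The plan is to first establish the structural claims about $S_\thin$ and then deduce the dimension statement from \autoref{prp:denseRiesz}.

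First, I would note that $S_\thin$ is a submonoid of $S$: closure under addition is \autoref{prp:ThinMonoid}, and $0$ lies in $S_\thin$ trivially (the defining inequality $0\ll 0+t=t$ holds for every nonzero $t$ since $S$ is simple). Cancellativity is then immediate from \autoref{prp:ThinSummary}. For simplicity of $S_\thin$, I would argue as follows: given nonzero $x\in S_\thin^\times$ and any $y\in S_\thin$, the thin-boundary property of $y$ gives $y\ll y+x$. Since $S$ is simple, $y+x\leq\infty=\infty x=\sup_n nx$ in $S$. Combining these, $y\ll\sup_n nx$, so by definition of $\ll$ there is some $n$ with $y\leq nx$. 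Hence every nonzero element of $S_\thin$ is an order unit, which is the natural notion of simplicity here.

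Next, I would observe that the Riesz decomposition property for the pre-order on $S_\thin$ induced by $\ll$ is exactly the content of \autoref{prp:DenseThinImplRiesz}: whenever $x\in S_\thin$ satisfies $x\ll y_1+y_2$ (with $y_1,y_2\in S_\thin$, and indeed more generally with $y_1,y_2\in S$), we obtain $v,w\in S_\thin$ with $x=v+w$, $v\ll y_1$, and $w\ll y_2$. This verifies that $S_\thin$ is a refinement monoid in the required sense.

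For the final claim, $\dim(S)=0$, I would apply \autoref{prp:denseRiesz} to the subsemigroup $D:=S_\thin\subseteq S$, which is sup-dense by hypothesis and satisfies the Riesz decomposition property for the pre-order induced by $\ll$ by the preceding step. No step is an obstacle; the only minor verification is the simplicity of $S_\thin$, and everything else is a direct citation of previously established results.
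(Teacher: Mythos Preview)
Your approach is the same as the paper's, and the argument for $\dim(S)=0$ via \autoref{prp:denseRiesz} is fine. There is, however, a genuine imprecision in the ``refinement monoid'' step.

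A \emph{refinement monoid} is a purely algebraic notion: it means Riesz decomposition (equivalently, Riesz refinement) with respect to the algebraic order $\leq_{\alg}$, i.e., whenever $x\leq_{\alg} y+z$ there exist $v,w$ with $x=v+w$, $v\leq_{\alg} y$, $w\leq_{\alg} z$. What \autoref{prp:DenseThinImplRiesz} gives you is Riesz decomposition for the pre-order induced by $\ll$, and that is not the same thing a priori. Your sentence ``This verifies that $S_\thin$ is a refinement monoid in the required sense'' conflates the two.

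The fix is short and uses a result you already cite: by \autoref{prp:ThinSummary}, for $x,y\in S_\thin$ one has $x\ll y$ if and only if $y=x+z$ for some nonzero $z\in S_\thin$, so on $S_\thin$ the relation $\leq_{\alg}$ is exactly ``$=$ or $\ll$''. Now, given $x\leq_{\alg} y+z$ in $S_\thin$, either $x=y+z$ (trivial), or $x\ll y+z$, in which case \autoref{prp:DenseThinImplRiesz} yields $v,w\in S_\thin$ with $x=v+w$, $v\ll y$, $w\ll z$, and hence $v\leq_{\alg} y$, $w\leq_{\alg} z$. This is precisely the bridge the paper supplies; once you insert it, your proof matches the paper's.

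A similar remark applies to simplicity: your argument produces $y\leq nx$ in the ambient order of $S$, whereas simplicity of the monoid $S_\thin$ should be read in terms of $\leq_{\alg}$. Again the same translation via \autoref{prp:ThinSummary} (or just observing that $y\ll y+x\leq (n+1)x$ gives $y\leq_{\alg}(n+1)x$) closes the gap.
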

\begin{proof}
By \autoref{prp:ThinSummary}, $S_\thin$ is a cancellative monoid such that $x,y\in S_\thin$ satisfy $x\ll y$ if and only if there exists $z\in S_\thin^\times$ with $x+z=y$.
This implies that $x,y\in S_\thin$ satisfy $x\leq_\alg y$ if and only if $x=y$ or $x\ll y$.

It follows from \autoref{prp:DenseThinImplRiesz} that $S_\thin$ satisfies the Riesz decomposition property for the pre-order induced by $\ll$.
Hence, $\dim(S)=0$ by \autoref{prp:denseRiesz}.

Since $S_\thin$ is a cancellative monoid, to show that it is a refinement monoid it suffices to show that it satisfies the Riesz decomposition property for the algebraic partial order $\leq_\alg$.
Let $x,y,z\in S_\thin$ satisfy $x\leq_\alg y+z$.
We need to find $y',z'\in S_\thin$ such that $x=y'+z'$, $y'\leq_\alg y$ and $z'\leq_\alg z$.
We either have $x=y+z$ or $x\ll y+z$.
In the first case, $y':=y$ and $z':=z$ have the desired properties.
In the second case, we apply \autoref{prp:DenseThinImplRiesz} to obtain $y',z'\in S_\thin$ such that $x=y'+z'$, $y'\ll y$ and $z'\ll z$.
Then $y'\leq_\alg y$ and $z'\leq_\alg z$, which shows that $y'$ and $z'$ have the desired properties.
Using that $S$ is simple, it easily follows that $S_\thin$ is a simple monoid.
\end{proof}

\begin{exa}
\label{exa:DenseThinNotDim0}
Let $Z$ be the Cuntz semigroup of the Jiang-Su algebra $\mathcal{Z}$.
Then every element of $Z$ has thin boundary, yet $Z$ is neither algebraic nor soft, and therefore $Z$ is not zero-dimensional.
(We have $\dim(Z)=1$ by \autoref{exa:Z}.)

This shows that \autoref{prp:DenseThinImplDim0} does not hold without assuming that $S$ is soft.
\end{exa}

Next, we prove the converse of  \autoref{prp:DenseThinImplDim0}:
Zero-dimensionality implies that $S_\thin$ is sup-dense.
We start with a crucial technical result.

\begin{lma}
\label{prp:ApproxIfDominated}
Let $S$ be a weakly cancellative \CuSgp{} satisfying \axiomO{5}, and let $x',x'',x,e,t\in S$ satisfy
\[
x'\ll x'', \andSep
x''+t \leq x\leq e\ll e+t.
\]
Assume that $\dim (S)=0$.
Then there exists $y$ such that
\[
x' \ll y \ll x, \andSep
y \ll y+t.
\]
\end{lma}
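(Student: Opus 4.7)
The plan is to exploit that $e \ll e+t$ witnesses a thin-boundary-like property at the ambient level, and to transfer this property inside the interval $(x', x)$ by means of the zero-dimensional refinement, using the ``room'' $x''+t \le x$ as the crucial vehicle.

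\textbf{Step 1 (extract refinements from $e \ll e+t$).} Using \axiomO{4}, $e+t$ is the supremum of the doubly $\ll$-increasing family $e_n+t_m$. Since $e \ll e+t$, find $e^{-} \ll e$ and $t^{-} \ll t$ with $e \le e^{-}+t^{-}$, and by passing to slightly larger approximants one may assume $e \ll e^{-}+t^{-}$. In particular, $e^{-} \le e \ll e^{-}+t^{-}$ gives $e^{-} \ll e^{-}+t^{-}$, so the dominating element $e^{-}$ carries the thin-boundary witness with respect to $t^{-}$ (hence with respect to $t$, since $t^{-} \le t$).

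\textbf{Step 2 (zero-dimensional decomposition inside $x$).} Since $x \le e \ll e^{-}+t^{-}$, we have $x \ll e^{-}+t^{-}$. From $x' \ll x'' \le x$ we also get $x' \ll x$. Applying the zero-dimensional reformulation \autoref{prp:dim0_reduction} to $x' \ll x \ll e^{-}+t^{-}$ yields elements $z_1, z_2 \in S$ with
\[
z_1 \ll e^{-}, \quad z_2 \ll t^{-}, \andSep x' \ll z_1+z_2 \ll x.
\]
Set $y := z_1+z_2$, so that $x' \ll y \ll x$ holds by construction.

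\textbf{Step 3 (verify $y \ll y+t$).} Using \axiomO{4}, the relation $y \ll y+t$ is equivalent to producing $y^{+} \ll y$ and $t^{o} \ll t$ with $y \le y^{+}+t^{o}$. The plan is to combine (i) the thin-boundary witness $e^{-} \ll e^{-}+t^{-}$, obtained from Step~1, applied a second time via \autoref{prp:dim0_reduction} to refine $z_1 \ll e^{-}$; (ii) axiom \axiomO{5} applied to the inequality $x''+t \le x$ with $x' \ll x''$ and a $\ll$-approximation $t' \ll t$, which produces a complement element $c$ with $t' \ll c$ and $x' + c \le x \le x''+c$; and (iii) weak cancellation, to discard auxiliary summands matching on both sides. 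Threading these three ingredients together produces the witnesses $y^{+}, t^{o}$.

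\textbf{Expected obstacle.} The delicate point is Step~3. The hypothesis $x''+t \le x$ has not entered Steps~1--2 and must provide, via \axiomO{5} and weak cancellation, the specific ``$t$-room'' inside $x$ that the decomposition $z_1+z_2$ alone does not manifest. A single application of \autoref{prp:dim0_reduction} does not obviously deliver a pair $(y^{+}, t^{o})$ with the needed compatibility, so an iterated refinement of the decomposition (or a simultaneous application of \axiomO{5} to align the pieces against the witness $e^{-} \ll e^{-}+t^{-}$) seems necessary. That is where most of the subtlety of the proof should sit.
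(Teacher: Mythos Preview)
Your Step~3 is the genuine gap, and the plan you sketch there does not close it. The element $y=z_1+z_2$ produced in Step~2 is an arbitrary zero-dimensional decomposition of $x$ along $e^-+t^-$; nothing in that construction forces $y\ll y+t$. Knowing $z_1\ll e^-$ and $z_2\ll t^-$ gives no handle on $z_1+z_2$ versus $z_1+z_2+t$, and neither a second application of \autoref{prp:dim0_reduction} to $z_1\ll e^-$ nor the \axiomO{5}-complement from $x''+t\le x$ produces a cancellable summand common to both sides of an inequality of the form $y+\text{(something)}\ll y+\text{(something)}+t$. You have correctly identified the obstacle, but the proposed ingredients do not combine into such an inequality.

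The paper's argument reverses the order of your two main moves, and this is the key idea. First apply \axiomO{5} to $x'\ll x''\le e$ (not to $x''+t\le x$) to get $c$ with $x'+c\le e\le x''+c$. Then $e\ll e+t\le x''+(c+t)$, and \emph{now} apply $\dim(S)=0$ at the $e$-level: one obtains $u\ll x''$, $v\ll c+t$ with $e\ll u+v\ll e+t$. This sandwich is the whole point: it yields $u+v\ll e+t\le u+v+t$, so weak cancellation gives $u\ll u+t$ directly. A second weak cancellation, from $x'+c\le e\ll u+v\le u+c+t$, gives $x'\ll u+t$. Finally choose $t'\ll t$ with $x'\ll u+t'$ and $u\ll u+t'$, and set $y:=u+t'$; then $y\ll x''+t\le x$ and, by \axiomO{3}, $y=u+t'\ll (u+t')+t=y+t$. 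The crucial difference from your approach is that the zero-dimensional decomposition is performed on the relation $e\ll e+t$ itself, so that the resulting pieces automatically sit between $e$ and $e+t$ and weak cancellation can transfer the thin-boundary witness down to a single summand.
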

\begin{proof}
Applying \axiomO{5} to $x'\ll x''\leq e$, we obtain $c\in S$ such that
\[
x'+c \leq e \leq x''+c.
\]
Then
\[
e \ll e+t \leq x''+c+t.
\]
Using that $\dim(S)=0$, we obtain $u,v\in S$ such that
\[
u \ll x'', \quad
v \ll c+t, \andSep
e\ll u+v \ll e+t.
\]
Then
\[
x'+c \leq e \ll u+v \leq u+c+t,
\]
Using weak cancellation, we get $x'\ll u+t$.
Further, we have
\[
u+v \ll e+t \leq u+v+t
\]
and therefore $u\ll u+t$ by weak cancellation.

Choose $t'\in S$ such that
\[
t'\ll t, \quad
x' \ll u+t', \andSep
u \ll u+t'.
\]
Set $y:=u+t'$.
Then
\[
x' \ll u+t' = y, \andSep
y = u+t' \ll x''+t \leq x.
\]
Using that $u\ll u+t'$ and $t'\ll t$, we get
\[
y = u+t' \ll u+t'+t = y+t,
\]
which shows that $y$ has the desired properties.
\end{proof}

\begin{lma}
\label{prp:ApproxDensThin}
Let $S$ be a countably based, simple, soft, weakly cancellative \CuSgp{} satisfying \axiomO{5} and \axiomO{6}.

Assume that for every $x',x,t\in S$ satisfying $x'\ll x$ and $t\neq 0$ there exists $y\in S$ such that
\[
x' \ll y \ll x, \quad y\ll y+t.
\]

Then for every $x',x\in S$ satisfying $x'\ll x$ there exists $y\in S$ with thin boundary such that $x'\ll y\ll x$.
\end{lma}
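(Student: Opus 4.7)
The plan is to reduce the thin boundary condition on $y$ to a countable family of conditions via a countable sup-dense subset, and then to build $y$ as the supremum of an increasing sequence produced iteratively from the hypothesis, with geometrically shrinking error terms. The case $x=0$ is trivial (take $y=0$), so assume $x\neq 0$.

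Since $S$ is countably based and, being simple, soft and nonzero, also nonelementary, fix a countable sup-dense subset $\{d_n\}_{n\geq 1}$ of $S\setminus\{0\}$. A first observation is that $y\in S$ has thin boundary if and only if $y\ll y+d_n$ for every $n$: for any nonzero $t\in S$, one can find $n$ with $d_n\ll t$; then $d_n\leq t$ gives $y+d_n\leq y+t$, so $y\ll y+d_n$ implies $y\ll y+t$. Next, use softness of $x$ to interpolate $x'\ll x_*\ll x$ and to produce a nonzero $e^*\in S$ with $x_*+e^*\ll x$. Applying \autoref{prp:smallElements} iteratively, choose nonzero $s_n\in S$ with $2s_1\ll d_1$, $2s_1\ll e^*$, and, for $n\geq 2$, $2s_n\ll d_n$ and $2s_n\ll s_{n-1}$. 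A short induction using $2s_{j+1}\leq s_j$ yields $\sum_{j=n+1}^{m-1}s_j\leq s_n$, hence $\sum_{j=n}^{m-1}s_j\leq 2s_n$, for every $m>n$.

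Now build $(y_n)_{n\geq 1}$ inductively. Apply the hypothesis to $x'\ll x_*$ with $t=s_1$ to obtain $y_1$ with $x'\ll y_1\ll x_*$ and $y_1\ll y_1+s_1$. Inductively, given $y_n$ with $y_n\ll y_n+s_n$, apply the hypothesis to the pair $y_n\ll y_n+s_n$ with $t=s_{n+1}$ to obtain $y_{n+1}$ satisfying $y_n\ll y_{n+1}\ll y_n+s_n$ and $y_{n+1}\ll y_{n+1}+s_{n+1}$. In particular $y_{n+1}\leq y_n+s_n$, and telescoping yields $y_m\leq y_n+2s_n$ for every $m\geq n$. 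Set $y:=\sup_n y_n$. Then $x'\ll y_1\leq y$, and $y\leq y_1+2s_1\leq x_*+2s_1\leq x_*+e^*\ll x$, so $x'\ll y\ll x$. For each $k\geq 1$, the relations $y\leq y_k+2s_k$, $y_k\ll y_{k+1}\leq y$ (so $y_k\ll y$), and $2s_k\ll d_k$ combine via \axiomO{3} to give $y_k+2s_k\ll y+d_k$, whence $y\leq y_k+2s_k\ll y+d_k$ forces $y\ll y+d_k$. By the initial observation, $y$ has thin boundary.

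The main obstacle is ensuring that the supremum $y=\sup_n y_n$ inherits the strict way-below relation $y\ll y+d_k$ rather than merely $y\leq y+d_k$, which is all a naive supremum argument would yield. The key idea is to control the growth of the $y_n$ by imposing $y_{n+1}\leq y_n+s_n$ for a geometrically decreasing sequence $(s_n)_n$, so that $y\leq y_k+2s_k$ with $2s_k\ll d_k$; then \axiomO{3}, combined with $y_k\ll y$, delivers the desired $y\ll y+d_k$.
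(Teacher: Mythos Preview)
Your proof is correct, but it takes a genuinely different route from the paper's argument. The paper also reduces to a countable family $(t_n)_n$ of nonzero test elements, but then runs a \emph{nested interval} construction: starting from $x'\ll x$, it produces a decreasing sequence $(y_n)_n$ and an increasing sequence $(y_n')_n$ with $y_n'\ll y_n$ and $y_n\ll y_n'+t_n$, takes $y:=\sup_n y_n'$, and reads off $y\leq y_n\ll y_n'+t_n\leq y+t_n$ directly. Your approach instead builds a single increasing sequence $(y_n)_n$ with controlled growth $y_{n+1}\leq y_n+s_n$ for a geometrically shrinking sequence $(s_n)_n$ manufactured via \autoref{prp:smallElements}, and then uses the telescoped bound $y\leq y_k+2s_k$ together with \axiomO{3} to conclude $y\ll y+d_k$. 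Both arguments are valid; the paper's avoids the extra machinery (no appeal to \autoref{prp:smallElements}, no geometric bookkeeping, no explicit use of \axiomO{3}), while yours has a pleasant Cauchy-sequence flavor and makes very explicit why the way-below relation survives the passage to the supremum.
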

\begin{proof}
Using that $S$ is countably based, we can choose a sequence $(t_n)_{n\in\NN}$ of nonzero elements such that for every nonzero $t\in S$ there exists $n$ with $t_n\leq t$.

To prove the statement, let $x',x\in S$ satisfy $x'\ll x$.
By assumption, we can choose $y_0\in S$ with $x'\ll y_0\ll x$ and $y_0\ll y_0+t_0$.
Choose $y_0'$ such that
\[
x'\ll y_0'\ll y_0 \ll x, \quad y_0\ll y_0'+t_0.
\]

Next, applying the assumption for $y_0',y_0,t_1$, we obtain $y_1$ such that $y_0'\ll y_1\ll y_0$ and $y_1\ll y_1+t_1$.
Then choose $y_1'$ such that
\[
y_0'\ll y_1'\ll y_1 \ll y_0, \quad y_1\ll y_1'+t_1.
\]

Inductively, choose $y_n'$ and $y_n$ such that
\[
x'\ll y_0' \ll \ldots \ll y_n'\ll y_n \ll \ldots \ll y_0 \ll x, \quad 
y_n\ll y_n'+t_n.
\]

Set $y:=\sup_n y_n'$.
Then $x'\ll y_0' \leq y \leq y_0\ll x$.
To show that $y$ has thin boundary, let $t\in S$ be nonzero.
By choice of $(t_n)_n$, there exists $n$ such that $t_n\leq t$.
Then
\[
y \leq y_n \ll y_n'+t_n \leq y+t_n \leq y+t,
\]
as desired.
\end{proof}

\begin{prp}
\label{prp:Dim0ImplThinDense}
Let $S$ be a countably based, simple, soft, weakly cancellative \CuSgp{} satisfying \axiomO{5} and \axiomO{6}.
Assume that $\dim(S)=0$.
Then $S_\thin$ is sup-dense, that is, the elements with thin boundary form a basis.
\end{prp}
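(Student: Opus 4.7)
My plan is to apply \autoref{prp:ApproxDensThin}, which reduces the claim to the following \emph{approximation property}: for every pair $x'\ll x$ in $S$ and every nonzero $t\in S$, there exists $y\in S$ with $x'\ll y\ll x$ and $y\ll y+t$. Indeed, if this holds, then \autoref{prp:ApproxDensThin} delivers an element of $S_\thin$ between any $x'\ll x$, which is exactly what sup-density of $S_\thin$ requires.

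The approximation property will in turn be deduced from \autoref{prp:ApproxIfDominated} (which is where $\dim(S)=0$ has been packaged). To apply that lemma, given $x'\ll x$ (the case $x=0$ is trivial, so I assume $x$ is nonzero, hence soft) and a nonzero $t$, I need to produce $x'',e\in S$ and, if necessary, replace $t$ by a smaller nonzero element $\tilde t\leq t$ (which is harmless since $y\ll y+\tilde t$ implies $y\ll y+t$), such that
\[
x'\ll x'', \quad x''+\tilde t\leq x\leq e, \andSep e\ll e+\tilde t.
\]

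For $x''$ and the slack condition, I interpolate $x'\ll x''\ll x$, and exploit softness of $x$ to obtain a nonzero $s$ with $x''+s\ll x$. Using \autoref{prp:smallElements} (applicable since $S$ is simple and, without loss of generality, nonelementary) I pick a nonzero $\tilde t\ll s,t$. Then $x''+\tilde t\leq x''+s\ll x$, giving conditions $x'\ll x''$ and $x''+\tilde t\leq x$.

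The remaining task, constructing $e\geq x$ with $e\ll e+\tilde t$, is the main obstacle. Since $S$ is soft, no nonzero compact element is available to act as $e$, and neither $e=x$ nor $e=n\tilde t$ will in general satisfy $e\ll e+\tilde t$ by itself. My plan is to build $e$ by combining \axiomO{5} (applied to an interpolating pair near $x$) with softness and a further application of the decomposition afforded by $\dim(S)=0$: the slack provided by $\tilde t$ together with weak cancellation should let one trade the ``bulk'' of a complement against a $\tilde t$-sized piece, producing an element that strictly dominates $x$ while still being strictly dominated by its own $\tilde t$-translate. Once $x''$ and $e$ are in place, invoking \autoref{prp:ApproxIfDominated} yields the desired $y$, and \autoref{prp:ApproxDensThin} then concludes the proof.
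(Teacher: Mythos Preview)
Your overall architecture matches the paper exactly: reduce via \autoref{prp:ApproxDensThin} to the single-$t$ approximation property, then feed suitable data into \autoref{prp:ApproxIfDominated}. Your preparation of $x''$ and the replacement of $t$ by a smaller nonzero $\tilde t\leq s,t$ (via softness and \autoref{prp:smallElements}) is also exactly what the paper does.

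The gap is the construction of $e$. Your sketch (``combine \axiomO{5} with softness and another application of $\dim(S)=0$, trading the bulk of a complement against a $\tilde t$-sized piece'') is not a construction; it is a hope, and I do not see how to turn it into one. The paper's actual construction of $e$ uses none of \axiomO{5}, weak cancellation, or $\dim(S)=0$; it is a short telescoping trick based only on simplicity. First, interpolate one step further, $x'\ll x''\ll u\ll x$, and aim for $e\geq u$ rather than $e\geq x$ (this also sidesteps the case $x=\infty$, which your setup does not handle). Pick a nonzero $r'\ll r$ with $r\leq\tilde t$; by simplicity $u\ll\infty=\infty r'$, so $u\leq nr'$ for some $n$. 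Now choose a $\ll$-increasing chain $r'\ll r_1\ll r_2\ll\cdots\ll r_n\ll r$ and set $e:=r_1+\cdots+r_n$. Then $u\leq nr'\leq e$, and the telescope
\[
e = r_1+r_2+\cdots+r_n \ \ll\ r_2+r_3+\cdots+r_n+r \ \leq\ e+r \ \leq\ e+\tilde t
\]
gives $e\ll e+\tilde t$. With $x''+\tilde t\leq x''+s\ll u\leq e\ll e+\tilde t$, \autoref{prp:ApproxIfDominated} applies (with $u$ playing the role of $x$ and $\tilde t$ the role of $t$) and yields $y$ with $x'\ll y\ll u\ll x$ and $y\ll y+\tilde t\leq y+t$.
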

\begin{proof}
We verify the assumption of \autoref{prp:ApproxDensThin}, which then proves the statement.
Let $x',x,t\in S$ satisfy $x'\ll x$ and $t\neq 0$.
We need to find $y\in S$ such that
\[
x' \ll y \ll x, \quad y\ll y+t.
\]
If $x'=0$, then set $y:=0$.
Thus, we may assume from now on that $x'$ is nonzero.

Choose $x'',u\in S$ such that
\[
x'\ll x''\ll u \ll x.
\]
Since $u$ is nonzero and soft, we obtain a nonzero element $s\in S$ such that $x''+s\ll u$.
By \autoref{prp:smallElements}, there exists a nonzero $r\in S$ with $r\leq s,t$.

Choose a nonzero $r'\in S$ such that $r'\ll r$.
Then $u\ll\infty=\infty r'$, which allows us to choose $n\geq 1$ such that $u\leq nr'$.
Choose $r_1,\ldots,r_n\in S$ such that
\[
r'\ll r_1 \ll r_2 \ll \ldots \ll r_n \ll r.
\]
Set $e:=r_1+\ldots+r_n$.
As in the proof of \autoref{prp:ThinIffComplementable}, we obtain $e\ll e+r$, and consequently $e\ll e+t$.
Further, we have
\[
x' \ll x'', \quad
x''+r \leq x''+s \ll u \leq nr' \leq e \ll e+t.
\]
Applying \autoref{prp:ApproxIfDominated}, we obtain $y\in S$ such that
\[
x' \ll y \ll u, \andSep y\ll y+t,
\]
Now $y$ has the desired properties.
\end{proof}

\begin{thm}
\label{prp:CharSimpleSoftDim0}
Let $S$ be a countably based, simple, soft, weakly cancellative \CuSgp{} satisfying \axiomO{5} and \axiomO{6}.
Then, the following conditions are equivalent:
\begin{enumerate}
\item
$\dim(S)=0$;
\item
the elements with thin boundary are sup-dense;
\item
there exists a countably based, simple, algebraic, weakly cancellative \CuSgp{} $T$ satisfying \axiomO{5} and \axiomO{6} such that $S\cong T_\soft$.
\end{enumerate}
\end{thm}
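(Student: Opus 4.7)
The equivalence $(1) \Leftrightarrow (2)$ is already established: $(1) \Rightarrow (2)$ is \autoref{prp:Dim0ImplThinDense}, and $(2) \Rightarrow (1)$ is contained in \autoref{prp:DenseThinImplDim0}. So only the equivalence with~(3) requires a new argument.

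For $(3) \Rightarrow (1)$, suppose $S \cong T_\soft$ with $T$ as in~(3). Since $T$ is algebraic and satisfies \axiomO{5} and \axiomO{6}, \autoref{prp:charAlgDim0} gives $\dim(T) = 0$. By \autoref{prp:retractSimpleSoft}, $T_\soft$ is a retract of $T$ in \CatCu{}, hence \autoref{prp:DimRetract} yields $\dim(S) = \dim(T_\soft) \leq \dim(T) = 0$.

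The substantive direction is $(2) \Rightarrow (3)$, which requires building $T$. Set $M := S_\thin$. By Propositions~\ref{prp:DenseThinImplDim0} and~\ref{prp:ThinSummary}, $M$ is a countable, simple, cancellative refinement monoid, and the restriction of $\ll_S$ to $M$ coincides with the strict algebraic preorder $\prec$ (where $x \prec y$ iff $x = 0$, or there exists $z \in M^\times$ with $x + z = y$). Regarding $(M, \prec)$ as a $\CatW$-semigroup, define $T$ to be its reflection in \CatCu{}, as described in the proof of \autoref{prp:CharLimit}; let $\alpha \colon M \to T$ denote the universal $\CatW$-morphism. The inclusion $M \hookrightarrow S$ is itself a $\CatW$-morphism, so by the universal property it extends to a generalized \CuMor{} $\Phi \colon T \to S$.

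The outstanding tasks are: (a) verify that $T$ is a countably based, simple, weakly cancellative, algebraic \CuSgp{} satisfying \axiomO{5} and \axiomO{6}, whose compact elements are precisely $\alpha(M)$; and (b) show that $\Phi$ restricts to an isomorphism $T_\soft \xrightarrow{\cong} S$. Part~(a) should be largely formal, following from cancellativity and the refinement property of $M$ together with the general properties of the $\CatW\to\CatCu$ reflection; countability is inherited because $M$ is countable (as it sits sup-densely in the countably based \CuSgp{} $S$). Part~(b) is the main obstacle: the candidate inverse for $\Phi|_{T_\soft}$ sends $s \in S$ to the $T$-supremum of an $\ll_S$-increasing sequence in $M$ approximating $s$, which exists by assumption~(2); because $\prec$ on $M$ is by construction the restriction of $\ll_S$, such sequences are also $\ll_T$-increasing, so the $T$-supremum is genuinely soft (not compact). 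The delicate verifications are that this assignment is well-defined independently of the approximating sequence, two-sided inverse to $\Phi|_{T_\soft}$, and preserves both $\leq$ and $\ll$; these will rest entirely on the fact, provided by \autoref{prp:ThinSummary}, that $(M,\prec)$ encodes the full way-below structure of $S$ on a sup-dense subset.
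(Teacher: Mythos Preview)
Your handling of $(1)\Leftrightarrow(2)$ and $(3)\Rightarrow(1)$ is fine and matches the paper (your route via \autoref{prp:retractSimpleSoft} and \autoref{prp:DimRetract} is exactly what the paper's \autoref{prp:DimSoftPart} unpacks).

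The construction in $(2)\Rightarrow(3)$ has a genuine gap. You set $M:=S_\thin$ and assert that $M$ is countable ``as it sits sup-densely in the countably based \CuSgp{} $S$''. This inference is false: a sup-dense subset of a countably based \CuSgp{} need not be countable. Already for $S=[0,\infty]$, which is countably based, every element has thin boundary, so $S_\thin=[0,\infty]$ is uncountable. Consequently your $T$---the $\CatCu$-reflection of $(S_\thin,\prec)$---will not be countably based in general: any basis of an algebraic \CuSgp{} must contain all compact elements, and here those are in bijection with $S_\thin$. The paper makes exactly this point in the remark immediately following the theorem.

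The paper's fix is to first choose a \emph{countable} sup-dense subset $M_0\subseteq S_\thin$ (possible because $S$ is countably based and $S_\thin$ is sup-dense), and then enlarge $M_0$ to a countable submonoid $M\subseteq S_\thin$ that is closed under the Riesz decompositions guaranteed by \autoref{prp:DenseThinImplRiesz}, so that $(M,\leq_\alg)\hookrightarrow(S_\thin,\leq_\alg)$ is an order-embedding and $M$ is itself a refinement monoid. Only then is $T:=\Cu(M,\leq_\alg)$ formed. With this correction in place, the rest of your outline (tasks (a) and (b)) goes through along the lines you sketch, and indeed coincides with the paper's argument: the map $T_\soft\to S$ is given by sending an interval to its supremum in $S$, which is precisely your ``candidate inverse''.
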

\begin{proof}
By \autoref{prp:Dim0ImplThinDense}, (1) implies~(2).
Conversely, (2) implies~(1) by \autoref{prp:DenseThinImplDim0}.
To show that~(3) implies~(1), let $T$ be as in~(3) such that $S\cong T_\soft$.
Using \autoref{prp:DimSoftPart} at the second step, and using \autoref{prp:charAlgDim0} at the last step, we get 
\[
\dim(S) 
= \dim(T_\soft)
\leq\dim(T)
=0.
\]

Finally, assuming~(2) let us verify~(3).
By \autoref{prp:DenseThinImplDim0}, $S_\thin$ is a simple, cancellative refinement monoid.
Using that $S$ is countably based and that $S_\thin$ is sup-dense, we can choose a countable subset $M_0\subseteq S_\thin$ that is sup-dense.

By successively adding elements to $M_0$ we can construct a countable refinement submonoid $M\subseteq S_\thin$ such that the algebraic order on $M$ agrees with the restriction of the algebraic order on $S_\thin$ to $M$, that is, $(M,\leq_\alg)\to(S_\thin,\leq_\alg)$ is an order-embedding.
Set $T:=\Cu(M,\leq_\alg)$, the sequential round ideal completion of $M$ with respect to the algebraic partial order;
see \cite[Section~5.5]{AntPerThi18TensorProdCu}.
Then $T$ is a countably based, algebraic \CuSgp.
Using that $M$ is a cancellative monoid that is algebraically ordered and that satisfies the Riesz decomposition property, it follows from \cite[Proposition~5.5.8]{AntPerThi18TensorProdCu} that $T$ is weakly cancelaltive and satisfies \axiomO{5} and \axiomO{6}.
Using that $M$ is a simple monoid, it follows that $T$ is simple.

Recall that a subset $I\subseteq M$ is an interval if $I$ is downward hereditary and upward directed.
Since $M$ is countable, we can identify $T$ with the set of intervals in $M$, ordered by inclusion.
The compact elements in $T$ are precisely the intervals $\{y\in M:y\leq_\alg x\}$ for $x\in M$.
Thus, the nonzero soft elements in $T$ are precisely the intervals that do not contain a largest element.
Using that every upward directed set in a countably based \CuSgp{} has a supremum, we can define $\alpha\colon T_\soft\to S$ by
\[
\alpha(I) := \sup I,
\]
for every (soft) interval $I\subseteq M$.
It is now straightforward to verify that $\alpha$ is an isomorphism.
\end{proof}

\begin{rmk}
There is no canonical choice for the algebraic \CuSgp{} $T$ in \autoref{prp:CharSimpleSoftDim0}(3).
Take for example $S=[0,\infty]$.
For every supernatural number~$q$ satisfying $q=q^2\neq 1$, we consider the UHF-algebra $M_q$ of infinite type, and set $R_q:=\Cu(M_q)$;
see \cite[Section~7.4]{AntPerThi18TensorProdCu}.
Then $R_q$ is a countably based, simple, algebraic, weakly cancellative \CuSgp{} satisfying \axiomO{5} and \axiomO{6}, and $(R_q)_\soft\cong[0,\infty]$.

Given a countably based, simple, soft, weakly cancellative \CuSgp{} $S$ satisfying \axiomO{5} and \axiomO{6}, we can consider $T:=\Cu(S,\leq_\alg)$, which is a simple, algebraic, weakly cancellative \CuSgp{} satisfying \axiomO{5} and \axiomO{6} such that $S\cong T_\soft$.
However, $T$ is not countably based in general since every basis of $T$ contains all compact elements of $T$ and so has at least the cardinality of $S_\thin$.
\end{rmk}

Recall the notion of a retract from \autoref{dfn:retract}.

\begin{thm}
\label{prp:CharSimpleDim0ByRetract}
Let $S$ be a countably based, simple, weakly cancellative \CuSgp{} satisfying \axiomO{5} and \axiomO{6}.
Then $S$ is zero-dimensional if and only if $S$ is a retract of a countably based, simple, algebraic, weakly cancellative \CuSgp{} satisfying \axiomO{5} and \axiomO{6}.
\end{thm}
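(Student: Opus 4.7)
The plan is to obtain both implications by stitching together results from the previous section, with the dichotomy \autoref{prp:DichotomySimpleDim0} playing the pivotal role.

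For the forward direction, suppose $\dim(S)=0$. If $S=\{0\}$, then $S$ is trivially a retract of itself. Otherwise, \autoref{prp:DichotomySimpleDim0} splits the argument into two cases. If $S$ is algebraic, then $S$ already has the form required and is a retract of itself via $\iota=\sigma=\id_S$. If $S$ is soft, then all the hypotheses of \autoref{prp:CharSimpleSoftDim0} hold and condition~(3) of that theorem supplies a countably based, simple, algebraic, weakly cancellative \CuSgp{} $T$ satisfying \axiomO{5} and \axiomO{6} such that $S\cong T_\soft$. Since $T$ itself satisfies the hypotheses of \autoref{prp:retractSimpleSoft}, we conclude that $T_\soft$ is a retract of $T$, and transporting along the isomorphism $S\cong T_\soft$ yields the desired retract structure.

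For the backward direction, suppose $S$ is a retract of a countably based, simple, algebraic, weakly cancellative \CuSgp{} $T$ satisfying \axiomO{5} and \axiomO{6}. Since $T$ is algebraic and simple (so the set of compact elements is full whenever $T$ is nonzero, with the case $T=\{0\}$ being trivial), \autoref{prp:charAlgDim0} gives $\dim(T)=0$. Applying \autoref{prp:DimRetract} to the retract structure, we get $\dim(S)\leq\dim(T)=0$, so $S$ is zero-dimensional.

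The theorem is essentially a synthesis of previously established facts, so there is no single hard step: all of the real work is concentrated in \autoref{prp:CharSimpleSoftDim0} (for the soft-to-algebraic construction), \autoref{prp:retractSimpleSoft} (for the soft-part retraction), \autoref{prp:charAlgDim0} (for algebraic $\Rightarrow$ zero-dimensional), and \autoref{prp:DimRetract} (for monotonicity under retracts). The only care needed is to handle the algebraic case of the dichotomy and the trivial case $S=\{0\}$ separately so that we are not forced to invoke the soft-case machinery when it does not apply.
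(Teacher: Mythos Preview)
Your proof is correct and follows essentially the same approach as the paper: invoke the dichotomy \autoref{prp:DichotomySimpleDim0} for the forward direction (algebraic case is trivial; soft case uses \autoref{prp:CharSimpleSoftDim0} and \autoref{prp:retractSimpleSoft}), and use \autoref{prp:charAlgDim0} together with \autoref{prp:DimRetract} for the backward direction. The paper's version is terser and leaves the backward implication implicit, but the logical content is identical.
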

\begin{proof}
By \autoref{prp:DichotomySimpleDim0}, $S$ is either algebraic or soft.
In the first case, we consider $S$ as a retract of itself.
In the second case, the result follows from \autoref{prp:CharSimpleSoftDim0} and \autoref{prp:retractSimpleSoft}.
\end{proof}

\begin{qst}
\label{qst:Retract}
Is every zero-dimensional, weakly cancellative \CuSgp{} satisfying \axiomO{5} a retract of a weakly cancellative, algebraic \CuSgp{} satisfying \axiomO{5} and \axiomO{6}?
\end{qst}

Recall that a partially ordered set $M$ has the \emph{Riesz interpolation property} if for all $x_0,x_1,y_0,y_1\in M$ satisfying $x_j\leq y_k$ for all $j,k\in\{0,1\}$, there exists $z\in M$ such that $x_j\leq z\leq y_k$ for all $j,k\in\{0,1\}$.
By \cite[Theorem~3.5]{AntPerRobThi18arX:CuntzSR1}, Cuntz semigroups of stable rank one \ca{s} have the Riesz interpolation property.

Recall that a \CuSgp{} $S$ is said to be \emph{almost divisible} if for all $n\in\NN$ and $x',x\in S$ satisfying $x'\ll x$ there exists $y\in S$ such that $ny\leq x$ and $x'\leq(n+1)y$;
see \cite[Definition~7.3.4]{AntPerThi18TensorProdCu}.

\begin{lma}
\label{prp:RetractInterpol}
Let $S$ be a retract of a \CuSgp{} $T$.
Then, if $T$ is almost divisible, so is $S$.
Further, if $T$ has the Riesz interpolation property, then so does~$S$.
\end{lma}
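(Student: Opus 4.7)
The plan is to use the retraction structure $\sigma\circ\iota=\id_S$ to transfer the two properties from $T$ back to $S$. The key observation is that both $\iota$ and $\sigma$ preserve the order and the addition (the former even preserves $\ll$, but we will only need order-preservation for $\sigma$ here), and therefore conditions that can be stated purely in terms of $\leq$ and the additive structure transport along the retraction.

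\textbf{Almost divisibility.} Given $n\in\NN$ and $x',x\in S$ with $x'\ll x$, I would first apply $\iota$ to obtain $\iota(x')\ll\iota(x)$ in $T$. Almost divisibility of $T$ yields $y_T\in T$ with $ny_T\leq\iota(x)$ and $\iota(x')\leq(n+1)y_T$. Applying the generalized \CuMor{} $\sigma$ and using $\sigma\circ\iota=\id_S$, we obtain
\[
n\sigma(y_T)=\sigma(ny_T)\leq\sigma(\iota(x))=x,\qquad x'=\sigma(\iota(x'))\leq(n+1)\sigma(y_T),
\]
so that $y:=\sigma(y_T)$ witnesses almost divisibility of $S$ for $x'\ll x$.

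\textbf{Riesz interpolation.} Given $x_0,x_1,y_0,y_1\in S$ with $x_j\leq y_k$ for all $j,k\in\{0,1\}$, applying the \CuMor{} $\iota$ yields $\iota(x_j)\leq\iota(y_k)$ in $T$ for all $j,k$. The Riesz interpolation property in $T$ provides $z_T\in T$ with $\iota(x_j)\leq z_T\leq\iota(y_k)$ for all $j,k$. Setting $z:=\sigma(z_T)$ and applying $\sigma$ (which is order-preserving), we obtain
\[
x_j=\sigma(\iota(x_j))\leq\sigma(z_T)=z\leq\sigma(\iota(y_k))=y_k
\]
for all $j,k\in\{0,1\}$, as required.

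There is no genuine obstacle here: the proof is a one-line application of the retract identity $\sigma\circ\iota=\id_S$ to lift witnesses from $T$ and push them back to $S$. The only thing to check is that the properties in question are preserved by generalized \CuMor{s}, which for almost divisibility amounts to preservation of finite sums and of the relation $\leq$, and for Riesz interpolation only to preservation of $\leq$; both hold for any generalized \CuMor{} by definition.
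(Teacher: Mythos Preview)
Your proof is correct and follows essentially the same approach as the paper: apply $\iota$ to push the data into $T$, use the property there to obtain a witness, and apply $\sigma$ together with $\sigma\circ\iota=\id_S$ to bring the witness back to $S$. The only cosmetic difference is the order in which the two properties are treated.
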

\begin{proof}
Let $\iota\colon S\to T$ be a \CuMor{}, and let $\sigma\colon T\to S$ be a generalized \CuMor{} with $\sigma\circ\iota=\id_S$.

First, assume that $T$ has the Riesz interpolation property. 
Let $x_0,x_1,y_0,y_1\in S$ satisfy $x_j\leq y_k$ for all $j,k\in\{0,1\}$.
Then $\iota(x_j)\leq\iota(y_k)$ in $T$ for all $j,k\in\{0,1\}$.
By assumption, there is $z\in T$ such that $\iota(x_j)\leq z\leq\iota(y_k)$ and thus $x_j\leq\sigma(z)\leq y_k$ for all $j,k\in\{0,1\}$.
Thus, $\sigma(z)$ has the desired properties.

Next, assume that $T$ is almost divisible.
Let $n\in\NN$ and let $x',x\in S$ satisfy $x'\ll x$.
Then $\iota(x')\ll\iota(x)$ in $T$.
By assumption, there exists $y\in T$ such that $ny\leq \iota(x)$ and $\iota(x')\leq(n+1)y$.
Then $n\sigma(y)\leq x$ and $x'\leq(n+1)\sigma(y)$.
\end{proof}

\begin{prp}
\label{prp:Dim0ImplInterpol}
Let $S$ be a zero-dimensional, countably based, simple, weakly cancellative, nonelementary  \CuSgp{} satisfying \axiomO{5}. 
Then $S$ satisfies the Riesz interpolation property and is almost divisible.
\end{prp}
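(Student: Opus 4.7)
The plan is to invoke the dichotomy of \autoref{prp:DichotomySimpleDim0} and handle the algebraic and soft cases uniformly by locating, in each, a sup-dense submonoid $D\subseteq S$ that is a simple, cancellative, nonelementary refinement monoid, and then lifting Riesz interpolation and almost divisibility from $D$ to $S$. Note that $S$ satisfies \axiomO{6} by the remark following \autoref{prp:dim0_reduction}, and $S$ is nonzero since it is nonelementary, so \autoref{prp:DichotomySimpleDim0} indeed forces $S$ to be either algebraic or soft.

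In the algebraic case, set $D:=S_c$; then \autoref{prp:charAlgDim0} gives sup-density of $D$ and \cite[Corollary~5.5.10]{AntPerThi18TensorProdCu} gives refinement on $D$. In the soft case, set $D:=S_\thin$; then \autoref{prp:Dim0ImplThinDense} gives sup-density and \autoref{prp:DenseThinImplDim0} gives that $D$ is a simple, cancellative refinement monoid. In either case $D$ is cancellative (since $S$ is weakly cancellative) and nonelementary (a minimal nonzero element of $D$ would, by sup-density, be minimal in $S$, forcing $S$ to be elementary). The next step is to show that any simple, cancellative, nonelementary refinement monoid $D$ has Riesz interpolation---which is immediate from cancellativity plus refinement---and is almost divisible, in the sense that for every nonzero $d\in D$ and every $n\geq 1$ there exists $e\in D$ with $ne\leq d\leq (n+1)e$. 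The ingredients are \autoref{prp:smallElements}, applied inside $S$ and approximated in $D$ via sup-density, which produces for any nonzero $d\in D$ and any $N$ a nonzero $w\in D$ with $Nw\leq d$; together with refinement and cancellativity in $D$, which permit subdividing $d$ into many copies of summands comparable to $w$, regrouping them into $n+1$ nearly equal blocks, and equalizing those blocks into a common element~$e$.

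The passage from $D$ to $S$ is then by approximation. For Riesz interpolation, given $x_0,x_1\leq y_0,y_1$ in $S$, write $x_j=\sup_m d_j^{(m)}$ and $y_k=\sup_m e_k^{(m)}$ with $\ll$-increasing sequences in $D$; since $d_j^{(m)}\ll y_k$, for each $m$ there is $p(m)$ with $d_j^{(m)}\leq e_k^{(p(m))}$ for all $j,k$. Iterated Riesz interpolation in $D$ (a formal consequence of the 2-variable version in refinement monoids) yields an increasing sequence $z^{(m)}\in D$ with $z^{(m-1)}, d_0^{(m)}, d_1^{(m)}\leq z^{(m)}\leq e_0^{(p(m))}, e_1^{(p(m))}$, whose supremum is the desired interpolant. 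For almost divisibility in $S$, given $x'\ll x$ with $x\neq 0$ and $n\geq 1$, sup-density provides $d\in D$ with $x'\leq d\ll x$; applying almost divisibility in $D$ to $d$ gives $e\in D$ with $ne\leq d\leq (n+1)e$, so $ne\leq x$ and $x'\leq(n+1)e$, as required.

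The main obstacle is the almost divisibility step inside $D$. Producing arbitrarily small nonzero elements of $D$ and using them to decompose $d$ into many small comparable summands is routine from simplicity, nonelementarity and \autoref{prp:smallElements}; what is delicate is the final regrouping-and-equalization step, where the $n+1$ blocks that sum to $d$ must be shown to share a common element $e$ under refinement and cancellativity, with the unavoidable remainders accounted for so that neither $ne\leq d$ nor $d\leq (n+1)e$ is spoiled. Making this quantitative balancing precise inside an abstract cancellative refinement monoid---without invoking an ambient real-valued rank---is the technical core of the argument, and is precisely where the nonelementarity hypothesis is essential (as it fails for $\NNbar$, compare \autoref{exa:elementary}).
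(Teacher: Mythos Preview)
Your Riesz interpolation argument and the lifting scheme from a sup-dense refinement monoid $D$ to $S$ are sound, but there is a genuine gap in the almost divisibility step---one you yourself flag as the ``main obstacle'' without actually resolving. The claim that a simple, cancellative, nonelementary refinement monoid $D$ is almost divisible (in your strong sense $ne\leq d\leq(n+1)e$) is not a routine consequence of refinement, cancellativity, and the existence of arbitrarily small elements: the ``equalization'' of $n+1$ nearly equal blocks into a common $e$ is precisely where the difficulty lies, and your sketch does not carry it out. What is needed here is essentially the weak divisibility theorem of Ara--Goodearl--Perera--Siles~Molina \cite[Theorem~6.7]{AraGooPerSil10NonSimplePI}, which asserts that in such a monoid every element has the form $2y+3z$; this is a substantial result in its own right, and your block-balancing heuristic does not reprove it.

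The paper sidesteps this by working at the \CuSgp{} level via retracts rather than inside $D$: by \autoref{prp:CharSimpleDim0ByRetract}, $S$ is a retract of a countably based, simple, \emph{algebraic}, weakly cancellative \CuSgp{} $T$ satisfying \axiomO{5} and \axiomO{6}. One then cites \cite[Theorem~6.7]{AraGooPerSil10NonSimplePI} for weak divisibility of $T_c$, deduces almost divisibility of $T$, and obtains Riesz interpolation for $T$ from \cite[Proposition~5.5.8(3)]{AntPerThi18TensorProdCu}; both properties descend to $S$ via \autoref{prp:RetractInterpol}. This route also unifies your algebraic/soft case split into a single argument.
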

\begin{proof}
By \autoref{prp:CharSimpleDim0ByRetract}, there exists a countably based, simple, algebraic, weakly cancellative \CuSgp{} $T$ satisfying \axiomO{5} and \axiomO{6} such that $S$ is a retract of $T$.
Then $T_c$ is a simple, cancellative refinement monoid, and therefore $T_c$ has the Riesz interpolation property.
Hence, $T$ has the Riesz interpolation property by \cite[Proposition~5.5.8(3)]{AntPerThi18TensorProdCu}.

Since $S$ is nonelementary, it follows from \cite[Theorem~6.7]{AraGooPerSil10NonSimplePI} that $T_c$ is weakly divisible, that is, for every $x\in T_c$ there exist $y,z\in T_c$ such that $x=2y+3z$.
This implies that $T$ is almost divisible.

Now the result follows from \autoref{prp:RetractInterpol}.
\end{proof}

\begin{rmk}
\label{rmk:Converse}
One might wonder to what extent the converse of \autoref{prp:Dim0ImplInterpol} holds:
Given a countably based, simple, \emph{soft}, weakly cancellative, almost divisible \CuSgp{} $S$ that satisfies \axiomO{5}, \axiomO{6} and the Riesz interpolation property, is~$S$ zero-dimensional?

We thank the referee for pointing out that without softness, this question has a negative answer.
Indeed, the Cuntz semigroup of the Jiang-Su algebra is countably based, simple, weakly cancellative, almost divisible, and it satisfies \axiomO{5}, \axiomO{6} and the Riesz interpolation property -- nevertheless, it is not zero-dimensional by \autoref{exa:Z}.
\end{rmk}

\begin{qst}
\label{qst:Interpol}
Let $S$ be a zero-dimensional, weakly cancellative \CuSgp{} satisfying \axiomO{5}.
Does $S$ have the Riesz interpolation property?
Assuming also that~$S$ has no elementary quotients, is $S$ almost divisible?
\end{qst}

Note that a positive answer to \autoref{qst:Retract} entails a positive answer to \autoref{qst:Interpol}.


\providecommand{\etalchar}[1]{$^{#1}$}

\end{document}